\newtheorem{theorem}{Theorem}[section]
\newtheorem{lemma}[theorem]{Lemma}
\theoremstyle{definition}
\newtheorem{definition}[theorem]{Definition}
\newtheorem{prop}[theorem]{Proposition}
\newtheorem{claim}[theorem]{Claim}
\theoremstyle{remark}
\newtheorem{remark}[theorem]{Remark}
\newtheorem{ques}{Question}
\newtheorem*{ques*}{Question}
\newcommand{\norm}[1]{\left\lVert#1\right\rVert}
\newcommand{\abs}[1]{\left\lvert#1\right\rvert}
\newcommand{\pa}[1]{\left( #1 \right)}
\newcommand{\rpa}[1]{\left[ #1 \right]}
\newcommand{\br}[1]{\left\lbrace #1\right\rbrace}
\newcommand{\R}{\mathbb{R}}
\newcommand{\N}{\mathbb{N}}
\newcommand{\F}{\mathcal{F}}
\numberwithin{equation}{section}
\newcommand{\C}{\mathbb{C}}
\newcommand{\St}{\mathbb{S}}
\newcommand{\W}{\mathcal{W}}
\newcommand{\Li}{\mathcal{L}}
\newcommand{\rw}{r_\omega}
\definecolor{darkblue}{rgb}{0.0, 0.0, 0.55}
\definecolor{ForestGreen}{RGB}{034,139,034}
\title{Interpolation of weighted Sobolev spaces}
\author{Michael Cwikel and Amit Einav}
\address{Department of Mathematics, Technion - Israel Institute of Technology, Haifa 32000, Israel}
\email{mcwikel@math.technion.ac.il}
\address{Institut f\"ur Analysis und Scientific Computing,
  Technische Universit\"at Wien, Wiedner Hauptstrasse 8-10
  A-1040 Vienna, Austria}
\email{amit.einav@asc.tuwien.ac.at}
\thanks{The first named author's work was supported by the Technion V.P.R. Fund, and by the Fund for Promotion of Research at the Technion. The second named author was supported by the Austrian Science Fund (FWF) grant M 2104-N32.}
\begin{document}
\date{}
\begin{abstract}
In this work we present a newly developed study of the interpolation of weighted Sobolev spaces by the complex method. We show that in some cases, one can obtain an analogue of the famous Stein-Weiss theorem for weighted $L^{p}$ spaces. We consider an example which gives some indication that this may not be possible in all cases. Our results apply in cases which cannot be treated by methods in earlier papers about interpolation of weighted Sobolev spaces. They include, for example, a proof that $\left[W^{1,p}\pa{\mathbb{R}^{d},\omega_{0}},W^{1,p}\pa{\mathbb{R}^{d},\omega_{1}}\right]_{\theta}=W^{1,p}\pa{\mathbb{R}^{d},\omega_{0}^{1-\theta}\omega_{1}^{\theta}}$ whenever $\omega_{0}$ and $\omega_{1}$ are continuous and their quotient is the exponential of a Lipschitz function. We also mention some possible applications of such interpolation in the study of convergence in evolution equations. 
\end{abstract}
\maketitle
\tableofcontents

\section{Introduction}\label{sec:into}
\subsection{Goals and aims}\label{subsec:goals}
Consider two weighted Sobolev spaces, $W^{s_{0},p_{0}}(U,\omega_{0})$ and $W^{s_{1},p_{1}}(U,\omega_{1})$ on an open subset $U$ of $\mathbb{R}^{d}$. (Precise definitions of these spaces and of other notions, which are mentioned only in general terms in this subsection, will be given later). A natural question one can ask oneself is the following:

\begin{ques}\label{ques:interpolation_sobolev}
If $T$ is a linear operator which is bounded on both $W^{s_{0},p_{0}}(U,\omega_{0})$ and $W^{s_{1},p_{1}}(U,\omega_{1})$, does this imply that $T$ is also bounded on various other weighted Sobolev spaces $W^{s,p}(U,\omega)$?
\end{ques}

For certain choices of $U$, weight functions $\omega_{0}$, $\omega_{1}$, and $s_{0}$, $s_{1}$, $p_{0}$ and $p_{1}$  the answer is known to be affirmative. In most cases, the affirmation of Question \ref{ques:interpolation_sobolev} is obtained with the explicit, or implicit, help of Alberto Calder\'on's complex interpolation spaces. As in many (but not all) papers dealing with these interpolation spaces, we will use Calder\'on's notation $\left[A_{0},A_{1}\right]_{\theta}$ for them. Thus, so far, the affirmation of Question \ref{ques:interpolation_sobolev} comes hand in hand with the affirmation of the following question:
\begin{ques}\label{ques:complex_method_sobolev}
Does the complex interpolation space $\left[W^{s_{0},p_{0}}(U,\omega_{0}),W^{s_{1},p_{1}}(U,\omega_{1})\right]_{\theta}$ coincide with $W^{s,p}(U,\omega)$ for some $s$, $p$ and $\omega$?
\end{ques}
Our goal in this work is to investigate Question \ref{ques:complex_method_sobolev} further, and provide some new affirmative answers to it - thereby also finding more cases where Question \ref{ques:interpolation_sobolev} has an affirmative answer. We consider that our results can be potentially useful in the study of asymptotic behaviour of solutions of certain evolution equations.

Before continuing, let us recall two previously treated cases of Questions \ref{ques:interpolation_sobolev} and \ref{ques:complex_method_sobolev}:

\textbf{The Stein-Weiss Theorem:} Suppose that $s_{0}=s_{0}=0$, i.e., suppose that our two weighted Sobolev spaces are simply weighted $L^{p}$ spaces. Then a celebrated result, often referred to as the Stein-Weiss Theorem, which goes back to the paper \cite{Stein} of Eli Stein and Stein's joint paper \cite{SW} with Guido Weiss, gives a positive answer to Question \ref{ques:interpolation_sobolev} for \textit{all} $p_{0}$ and $p_{1}$ in $\left[1,\infty\right]$ and \textit{all} choices of positive measurable functions $\omega_{0}$
and $\omega_{1}$ on $U$ (and even in a more general setting, when $(U,dx)$ is replaced by an arbitrary measure space). More explicitly, in this case, the operator $T$ is bounded on $W^{0,p}(U,\omega)$ whenever $p$ and $\omega$ satisfy 
\begin{equation}\label{eq:pwtheta}
\frac{1}{p}=\frac{1-\theta}{p_{0}}+\frac{\theta}{p_{1}},\quad \text{and} \quad \omega^{\frac{1}{p}}=\omega_{0}^{\frac{1-\theta}{p_0}}\omega_{1}^{\frac{\theta}{p_{1}}}.
\end{equation}
for some $\theta\in(0,1)$. This result was obtained before Calder\'on developed his theory of complex interpolation spaces, but in the light of that theory the Stein-Weiss Theorem can also be expressed by the formula 
\begin{equation}
\left[W^{0,p_{0}}(U,\omega_{0}),W^{0,p_{1}}(U,\omega_{1})\right]_{\theta}=W^{0,p}(U,\omega)\label{eq:SteinWeiss}
\end{equation}
which holds isometrically for each $\theta\in(0,1)$ and for $p$ and $\omega$ as in (\ref{eq:pwtheta}). (When $p_{0}=p_{1}=\infty$ the weight $\omega$ of course cannot be determined from \eqref{eq:pwtheta} but in that case \eqref{eq:SteinWeiss} holds for all choices of $\omega$ for a trivial reason related to the discussion below in Remarks \ref{rem:InfinityIsTrivial} and \ref{rem:WhyInfinityIsTrivial}.)\\
\textbf{A result of J\"orgen L\"ofstr\"om:} In his paper \cite{Lofstrom}, among various other results, L\"ofstr\"om gives an affirmative answer to Question \ref{ques:complex_method_sobolev} in the setting where $s_{0}$ and $s_{1}$ can be non-zero. However, the set $U$ must be all of $\mathbb{R}^{d}$ and the
powers $\omega_{0}^{1/p_{0}}$ and $\omega_{1}^{1/p_{1}}$ of the given weight functions are required to satisfy a special condition which he calls ``polynomial regularity''. With these conditions fulfilled, L\"ofstr\"om is able to use the Fourier transform and the Mihlin multiplier theorem and to define the weighted Sobolev spaces $W^{s_{0},p_{0}}\pa{\mathbb{R}^{d},\omega_{0}}$ and $W^{s_{1},p_{1}}\pa{\mathbb{R}^{d},\omega_{1}}$ via properties of the Fourier transforms of their elements. In this setting L\"ofstr\"om proves that
\begin{equation}\label{eq:Lofstrom}
\rpa{W^{s_{0},p_{0}}\pa{\R^d,\omega_{0}},W^{s_{1},p_{1}}\pa{\R^d,\omega_{1}}}_{\theta}=W^{s_{\theta},p}\pa{\R^d,\omega}
\end{equation}
to within equivalence of norms, for each $p_0$ and $p_1\in (1,\infty)$ and each $\theta\in(0,1)$ where $s_{\theta}=(1-\theta)s_{0}+\theta s_{1}$ and $p$ and $\omega$ are given by (\ref{eq:pwtheta}).  Taking into account the particular different notation and formulation of definitions used by L\"ofstr\"om, one can see that this result is expressed by the formula (5.3) in Theorem 4 on page 208 of \cite{Lofstrom}. An interesting advantage of L\"ofstr\"om's method is that he can extend his definition of weighted Sobolev spaces to the case where their orders of smoothness are not necessarily integers, and in fact he proves his formula (\ref{eq:Lofstrom}) in this more general setting.

\par Of course, in the special case where $s_{0}=s_{1}=0$, the formula (\ref{eq:Lofstrom}) becomes the Stein-Weiss formula (\ref{eq:SteinWeiss}).
\par In this paper, in contrast to L\"ofstr\"om's work, we shall only consider the case
$$s_0=s_1=1,\quad\quad p_0=p_1=p\in[1,\infty).$$
However, making these restrictions gives us much more flexibility in our choices of the set $U$ and the weight functions $\omega_{0}$ and $\omega_{1}$ than is available in \cite{Lofstrom}. We can also include the case where $p=1$. For a relatively large class of weight functions, and for $U=\R^d$, we obtain that
\begin{equation}\label{eq:OurFormula}
\left[W^{1,p}(U,\omega_{0}),W^{1,p}(U,\omega_{1})\right]_{\theta}=W^{1,p}(U,\omega_{\theta})
\end{equation}
to within equivalence of norms, for all $p\in [1,\infty)$ and $\theta\in(0,1)$, where here, and also in fact throughout this paper, $\omega_\theta$ is the weight function obtained, for each $\theta$, from whichever weight functions $\omega_0$ and $\omega_1$ are currently under consideration, by the formula 
\begin{equation}\label{eq:omegatheta}
\omega_{\theta}=\omega_{0}^{1-\theta}\omega_{1}^{\theta}.
\end{equation}
\begin{remark}\label{rem:InfinityIsTrivial}When $p=\infty$, a case which we have not formally included in our discussion, the formula \eqref{eq:OurFormula} is an uninteresting triviality (see Remark \ref{rem:WhyInfinityIsTrivial} below) which holds isometrically for all choices of the weight functions $\omega_{0}$ and $\omega_{1}$ (as can be justified later by part \eqref{item:inter_properties_A} of Theorem \ref{thm:OtherProperties}). 
\end{remark}
Here is one easily formulated special case of our results, which gives some indication of the considerable flexibility we have in our choices of the weight functions $\omega_{0}$ and $\omega_{1}$: When the set $U$ is chosen to be all of $\mathbb{R}^{d}$, we can prove that (\ref{eq:OurFormula}) holds whenever $\omega_{0}/\omega_{1}$ is the exponential of a Lipschitz function and $\omega_{0}$ and $\omega_{1}$ are continuous. In fact we can make do here with a much weaker condition than continuity which is satisfied by various functions which are not even equivalent to continuous functions.

In fact \eqref{eq:OurFormula} is valid not only for $U=\R^d$, but also for a certain collection of other open sets which satisfy properties which we shall specify below. Moreover, if we consider the natural subspaces $W^{1,p}_0\pa{U,\omega}$ of functions in $W^{1,p}\pa{U,\omega}$ that can be approximated by a natural class of compactly supported functions, an analogue of \eqref{eq:OurFormula} for these spaces is valid for an even larger collection of open sets $U$. 

Our strategy for obtaining (\ref{eq:OurFormula}) will be to show that, under somewhat milder conditions on $\omega_{0}$ and $\omega_{1}$ than those imposed in L\"ofstr\"om's theorem, and for appropriate choices of $U$, the interpolation space $\left[W^{1,p}(U,\omega_{0}),W^{1,p}(U,\omega_{1})\right]_{\theta}$ satisfies the continuous inclusions 
$$ \mathcal{W}^{p}\left(U,\theta,r_{\omega}\right)\subset\left[W^{1,p}(U,\omega_{0}),W^{1,p}(U,\omega_{1})\right]_{\theta}\subset W^{1,p}(U,\omega_{\theta})$$
for a certain space $\mathcal{W}^{p}\left(U,\theta,r_{\omega}\right)$ which is the intersection of $W^{1,p}(U,\omega_{\theta})$ with an appropriate weighted $L^{p}$
space. These inclusions may also be of independent interest. We can then deduce that (\ref{eq:OurFormula}) holds under some explicitly
formulated conditions (see Theorem \ref{thm:Stein_Weiss_for_W}), which ensure that $W^{1,p}(U,\omega_{\theta})$ and $\mathcal{W}^{p}\left(U,\theta,r_{\omega}\right)$ coincide.\\
Interestingly enough, we will also present a somewhat intricate example where these spaces do not coincide, but we cannot exclude the possibility that (\ref{eq:OurFormula}) might still hold for that example. \\
We are now ready, in the following subsections, to give more explicit and more detailed formulations of our results and of the definitions of the notions which play roles in them.
\begin{remark} \label{rem:OtherPapersEtcZZ}
There are a number of other papers and books which contain other interpolation results involving weighted Sobolev spaces (and also sometimes other related spaces such as weighted Besov spaces and/or weighted Triebel-Lizorkin spaces) but the results in them do not quite correspond to the kind of results that we are seeking here. 
Some of them deal with the real method, rather than the complex method of interpolation. Some deal with interpolation
between weighted Sobolev spaces and weighted $L^{p}$ spaces. Also, in some of them, the definitions of weighted Sobolev spaces differ in some non-trivial way from the definition that we have chosen to use here. We can refer, for example, to \cite{EdmundsDTriebelH1996,Grisvard,HaroskeDTriebelH1994A,HaroskeDTriebelH1994B,HaroskeDTriebelH2005,PyatkovS1995,Pya,PyatkovS2001,PyatkovS2010,SchmiesserHTriebelH1987,TriebelH1978}.
Among the many other papers and some books which deal with other aspects of weighted Sobolev spaces and some of their applications we can mention, for example, the works \cite{Kufner,KO,KufnerASandigA1987} of Kufner, Kufner-Opic, and Kufner-S\"andig.
\end{remark}
\subsection{The setting of the problem}\label{subsec:settingZZ} In what is to follow, $d$ will always be a positive integer, $p$ will always lie in the interval $[1,\infty]$, and $U$ will always denote a non-empty open subset of $\R^d$. For most of our results $p$ will be restricted to the range $[1,\infty)$. The letter $\omega$, with or without a subscript, will always denote a \textit{weight function on $U$}, i.e. a Lebesgue measurable function $\omega:U\to (0,\infty)$.\\
Often the weight functions which are of interest in various applications are continuous. However, for our results here which establish the formula \eqref{eq:OurFormula} in certain cases, we will not need the weight functions $\omega_0$ and $\omega_1$ to be continuous, nor even equivalent to continuous functions. Instead we will require them to satisfy a rather weaker condition which we are now about to define. (We will however need $\omega_0/\omega_1$ to have some smoothness properties.)\\
We will be considering weight functions $\omega:U\to(0,\infty)$ which, like continuous functions,
satisfy 
\begin{equation}\label{eq:compact_boundedness}
0<\inf_{x\in K}\omega(x)\leq\sup_{x\in K}\omega(x)<\infty,\quad\text{for every compact set }K\subset U.
\end{equation}
 Consequently, it will sometimes be convenient to use the notation
\begin{equation}
m(K,\omega):=\inf_{x\in K}\omega(x),\quad \text{and}\quad M(K,\omega):=\sup_{x\in K}\omega(x).\label{eq:Define_m_and_M}
\end{equation}
\begin{definition}\label{def:compact_boundednessNewCompactBoundednessZZ}
A weight function $\omega:U\to(0,\infty)$ on an open subset $U$ of $\mathbb{R}^{d}$ which satisfies (\ref{eq:compact_boundedness}) will be said to satisfy the \textit{compact boundedness condition on }$U$.
\end{definition}
\begin{remark}\label{rem:MuchMoreGeneralThanContinuous} A weight function $\omega$ satisfying this condition can be very far from being continuous. As we already intimated above, it can, for example, have the property that, for every choice of a pair of positive constants $C_{1}$ and $C_{2}$ and of a continuous function $f$, the inequality $C_{1}f(x)\le\omega(x)\le C_{2}f(x)$
cannot hold for all $x\in U$, not even for almost all $x\in U$. As an example of this, let $U=(0,\infty)$ and let $\omega$ be the weight function on $U$ defined by $\omega(x)=\sum_{n=0}^{\infty}n\chi_{(2n,2n+1]}+\sum_{n=0}^{\infty}n^{2}\chi_{(2n,+1,2n+2]}$.
\end{remark}
\begin{remark}\label{rem:CBC-obvious}
We will have occasions to use the obvious fact that, if $\omega_{0}$ and $\omega_{1}$ are weight functions which satisfy the compact boundedness condition on some set $U$, then the same is true for every power, product and quotient of these weights, and in particular for $\omega_{0}^{1-\theta}\omega_{1}^{\theta}$ and $\omega_{0}/\omega_{1}$. 
\end{remark}
In the literature there are two standard ways of defining weighted $L^{p}$ spaces with respect to a given weight $\omega$. In one of them the weighted space consists of all functions $f$ for which $\omega f$ is in the (unweighted) $L^{p}$ space. But here we shall choose the second way, which proceeds by replacing the ambient underlying measure, in our case Lebesgue measure $dx$, with the weighted measure $\omega(x)dx$. In fact we shall need to use weighted $L^{p}$ spaces of vector valued functions.
\begin{definition}\label{def:L^p_w}
Given a weight function $\omega$ on $U$ and $p\in[1,\infty)$, and $m\in \N$, we define the Banach space
$$L^p\pa{U,\omega,\C^m}=\br{\phi:U \to\C^m\;|\;\phi\text{ is measurable and } \norm{\phi}_{L^p\pa{U,\omega,\C^m}}<\infty}$$
where
$$\norm{\phi}_{L^p\pa{U,\omega,\C^m}}=\pa{\sum_{i=1}^m \int_{U}\abs{\phi_i(x)}^p\omega(x)dx}^{\frac{1}{p}},$$
for each $\phi=\pa{\phi_1,\dots,\phi_m}$. 
\par We will sometimes use the abbreviated notation $L^{p}\pa{U,\omega}$ for $L^{p}(U,\omega,\mathbb{C})$, i.e. when $m=1$. We will also use the notation $L^{p}(U,\mathbb{C}^{m})$ for the ``unweighted'' case, where $\omega$ is identically $1$.
\end{definition}
As is customary in such definitions, there is a natural ``limiting'' extension to the case where $p=\infty$, which in our context takes the following form:
\begin{definition} \label{def:When_p_is_infinite} For $U$, $\omega$ and $m$ as above we define the Banach space 
$$ L^{\infty}\left(U,\omega,\mathbb{C}^{m}\right)=\left\{ \phi:U\to\mathbb{C}^{m}\;|\;\phi\text{ is measurable and }\left\Vert \phi\right\Vert _{L^{\infty}\left(U,\omega,\mathbb{C}^{m}\right)}<\infty\right\} $$
where 
$$ \mathrm{\left\Vert \phi\right\Vert _{L^{\infty}\left(U,\omega,\mathbb{C}^{m}\right)}=ess\, sup}_{x\in U}\left(\max_{i\in\{1,2,\dots,m\}}\left|\phi_{i}(x)\right|\right)
$$
for each $\phi=\left(\phi_{1},\dots,\phi_{m}\right)$.
\end{definition}

\begin{remark} \label{rem:IndependentOfWeight} The consequence of the choice that we made in formulating Definition \ref{def:L^p_w} is that here the weighted $L^{\infty}$ space in fact does not depend on $\omega$ and coincides isometrically with the unweighted space $L^{\infty}(U,\mathbb{C}^{m})$ which we shall occasionally use below. Had we chosen the first way of defining weighted $L^{p}$ spaces we would have obtained a more ``interesting'' space here.
\end{remark}
We can now introduce the main spaces that we wish to study.
\begin{definition}\label{def:W^p_wZZ}
Given an open set $U$ of $\R^d$, a weight function $\omega$ on $U$ and $p\in [1,\infty]$, we define
\begin{equation}\nonumber
\begin{split}
W^{1,p}\pa{U,\omega}=\bigg\{\phi:U\to &\C\;|\;\phi\text{ has a weak gradient }\nabla \phi\text{ on }U\text{ and}\\
&\pa{\phi,\nabla \phi}\in L^p\pa{U,\omega,\C^{d+1}} \bigg\},
\end{split}
\end{equation}
where the weak gradient $\nabla \phi$ (i.e. the vector of its weak first order partial derivatives) is defined in the usual way via the theory of distributions, i.e. for every $\psi\in C^\infty_c\pa{U}$ it satisfies 
\begin{equation}\label{eq:WeakDeriv}
\int_{U}\nabla \psi (x) \phi(x)dx = - \int_{U} \psi (x) \nabla \phi(x)dx.
\end{equation}
The norm\footnote{Of course in this definition, and in Definitions \ref{def:L^p_w} and \ref{def:When_p_is_infinite}, we permit ourselves the usual abuse of terminology where the word ``function'' really means an equivalence class of functions which are equal to each other almost everywhere.} which we will define on $W^{1,p}\pa{U,\omega}$ is given by
$$\norm{\phi}_{W^{1,p}\pa{U,\omega}}=\norm{\pa{\phi,\nabla \phi}}_{L^p\pa{U,\omega,\C^{d+1}}}.$$
\end{definition} 
\begin{remark}\label{rem:LocalIntegrabilityZZ}The requirement that $\phi$ and $\nabla\phi$ satisfy the equation (\ref{eq:WeakDeriv}) in which the integrals are evidently understood to be Lebesgue integrals, forces both of these functions to be locally integrable on $U$ (with respect to (unweighted) Lebesgue measure). 
\end{remark}
\begin{remark}\label{rem:Unweighted}
In the case where the weight function $\omega$ is identically equal to $1$ the above weighted Sobolev spaces coincide with their original and more frequently studied ``unweighted'' analogues. We shall occasionally need to use these unweighted spaces here and (analogously to what we did for unweighted $L^p$ spaces) we shall use the same notation as above for them, but with $\omega$ simply omitted. Note (cf. Remark \ref{rem:IndependentOfWeight}) that, for every weight function $\omega$, $W^{1,\infty}(U,\omega)$ coincides isometrically with the unweighted Sobolev space $W^{1,\infty}(U)$. 
\end{remark}

The study of weighted Sobolev spaces is certainly not new. The definition which we have chosen to use for them here is only one of a number of various different definitions chosen by authors of the many papers and several books (already mentioned above in \S \ref{subsec:goals}) which deal with weighted Sobolev spaces and in some cases also give or indicate various applications of them to other topics in analysis.\\
Consider the normed space $W^{k,p}(\Omega,\sigma)$ introduced on page 11 of \cite{Kufner} and also, with slightly different notation, on page 537 of \cite{KO}. In the case where $\Omega=U$ and $k=1$ and all the elements of the vector $\sigma=\left\{ \sigma_{\alpha}:\left|\alpha\right|\le1\right\} $ are chosen to be $\omega$ this space coincides isometrically with our space $W^{1,p}(U,\omega)$. (In \cite{Kufner} it is assumed that $p\in[1,\infty)$ and that the open set $\Omega$ is also connected.)\\
For most of our purposes in this paper we need the weighted Sobolev spaces with which we are working to be complete. Accordingly,
most of our results here will be formulated subject to the ``abstract'' requirement that the weighted Sobolev spaces being considered in them are complete. When seeking to apply these results we can keep in mind that there are various ``concrete'' conditions on a weight function $\omega$ which are sufficient to ensure that $W^{1,p}(U,\omega)$ is complete. One of these is a condition which appears in Theorem 1.11 of \cite[pp. 540--541]{KO} in the case where $p>1$. More relevantly for us, and as we shall
show below in Lemma \ref{lem:banach_space_and_loc}, $W^{1,p}(U,\omega)$ is complete whenever $\omega$ satisfies the compact boundedness condition. This is a condition which will be imposed anyway, also for other purposes, on the weight functions appearing in our main theorem \ref{thm:main}, and in Theorems \ref{thm:main_for_W_0}, \ref{thm:main_for_U_including_R_d} and \ref{thm:Stein_Weiss_for_W} which follow from it. 
\begin{remark}\label{rem:NotCompleteZZ}
We have not attempted to find examples of $p$, $U$ and $\omega$ for which $W^{1,p}(U,\omega)$ is not complete. However Example 1.12 on pp.
541--543 of \cite{KO} shows explicitly that a certain variant of $W^{1,p}(U,\omega)$, where \textit{different} weight functions are used for the weighted $L^{p}$ norms of the function and of its weak first order partial derivatives, can fail to be complete.
\end{remark}
In some of our results we will also be requiring the weighted Sobolev spaces $W^{1,p}(U,\omega)$ appearing in them to have the property that each of their elements can be approximated by a sequence of compactly supported $C^{\infty}$ or Lipschitz functions. Here too there are concrete conditions on $U$ and $\omega$ which suffice to imply such properties. The interested reader can find more information about such issues in \cite{Kufner} and \cite{KO}. 
\subsection{Main results}\label{subsec:main_resultsZZ}
As we've stated in \S \ref{subsec:goals}, our main goal in this work is to find settings in which an analogue of the Stein-Weiss theorem holds for weighted Sobolev spaces, or for suitable substitutes for these spaces.\\
In what follows, we will denote by $Lip_{c}\pa{U}$, or $Lip_{c}$ for short, the space of all Lipschitz functions $\varphi:U\to\mathbb{C}$ with compact support. To avoid any
ambiguity here, we should point out that $\varphi\in Lip_{c}(U)$ means that the closure of $\left\{ x\in U:\varphi(x)\ne0\right\}$ in $\mathbb{R}^{d}$ is a compact subset, not only of $\mathbb{R}^{d}$, but also of $U$.
\begin{definition}\label{def:W_0} 
Let $\omega$ be a weight function such that $W^{1,p}\left(U,\omega\right)$ is a Banach space, containing $Lip_{c}\left(U\right)$. Define the Banach space $W_{0}^{1,p}\left(U,\omega\right)$ to be the subspace of $W^{1,p}\pa{U,\omega}$ equipped with the same norm as that of $W^{1,p}\pa{U,\omega}$ which is the closure of $Lip_{c}\left(U\right)$ with respect to that norm.
\end{definition}
In particular we should mention that if $\omega$ satisfies the compact boundedness condition on $U$ then, as we shall show below in Lemma \ref{lem:banach_space_and_loc}, $W^{1,p}\left(U,\omega\right)$ has the two above mentioned properties required to make the above definition of $W_{0}^{1,p}\left(U,\omega\right)$ applicable.\\
As certain steps in the proof of our main theorem will show, $Lip_c$ is a very natural space to consider for our purposes. In addition, for many choices of the open set $U$, the subspace $C_c^\infty \pa{U}$ of $Lip_c(U)$ can be shown to have the same closure in $W^{1,p}\pa{U,\omega}$ norm as that of $Lip_c\pa{U}$.\\
We will also sometimes need to consider functions which satisfy the following variant of the Lipschitz condition.
\begin{definition}\label{def:LocallyLipschitzZZ}
A function $\phi:U\to\mathbb{C}$ is said to be \textit{locally Lipschitz} if, for each compact subset $K$ of $U$, the restriction of $\phi$ to $K$ is Lipschitz.
\end{definition}
\begin{remark}\label{rem:E2VZZ}
It is easy to verify that $\phi:U\to\mathbb{C}$ is locally Lipschitz if and only if its restriction to the set $V$ is Lipschitz whenever $V$ is an open set whose closure is a compact
subset of $U$.
\end{remark}
Let us now state our main theorem: 
\begin{theorem}\label{thm:main} 
Let $\omega_{0}$ and $\omega_{1}$ be weight functions on $U$ that satisfy the compact boundedness condition \eqref{eq:compact_boundedness}. Assume furthermore that the function $\rw:U\to(0,\infty)$ defined by
\begin{equation}\label{eq:rw}
r_{\omega}(x):=\frac{\omega_{0}(x)}{\omega_{1}(x)}
\end{equation}
is locally Lipschitz on $U$. For each $p\in [1,\infty)$ and $\theta\in(0,1)$, let $\W^p\pa{U,\theta,\rw}$ be the space  
$$\mathcal{W}^{p}\pa{U,\theta,\rw}=\left\{ \phi:U\rightarrow\mathbb{C}\;|\; \phi\in W^{1,p}\left(U,\omega_{\theta}\right),\; \phi\left\vert \nabla\log\left(r_{\omega}\right)\right\vert \in L^{p}\left(U,\omega_{\theta}\right)\right\} $$
with the norm 
\begin{equation}
\left\Vert \phi\right\Vert _{\mathcal{W}^{p}\left(U,\theta,\rw\right)}=\left(\left\Vert \phi\right\Vert _{W^{1,p}\left(U,\omega_{\theta}\right)}^{p}+\int_{U}\left\vert \phi(x)\right\vert ^{p}\left\vert \nabla\log\left(r_{\omega}(x)\right)\right\vert ^{p}\omega_{\theta}(x)dx\right)^{\frac{1}{p}},\label{eq:norm_on_W}
\end{equation}
and define $\mathcal{W}_{0}^{p}\left(U,\theta,r_{\omega}\right)$ to be the closure of $Lip_{c}\left(U\right)$ in $\mathcal{W}^{p}\left(U,\theta,\rw\right)$ with respect to this norm. Then, for every $p\in [1,\infty)$ and $\theta\in (0,1)$, we have that
\begin{equation}\label{eq:easy_inclusion_thm}
\left[W^{1,p}\left(U,\omega_{0}\right),W^{1,p}\left(U,\omega_{1}\right)\right]_{\theta}\subset W^{1,p}\left(U,\omega_{\theta}\right),
\end{equation}
and
\begin{equation}\label{eq:hard_inclusion_thm}
\mathcal{W}_{0}^{p}\left(U,\theta,r_{\omega}\right)\subset \left[W_{0}^{1,p}\left(U,\omega_{0}\right),W_{0}^{1,p}\left(U,\omega_{1}\right)\right]_{\theta}.
\end{equation}
Moreover, there exists a positive constant $C_p$ which depends only on $p$, such that, for every $\phi\in \mathcal{W}_{0}^{p}\left(U,\theta,r_{\omega}\right)$,
\begin{equation}\label{eq:main_norm}
\left\Vert \phi\right\Vert _{W^{1,p}\pa{U,\omega_{\theta}}}\leq\left\Vert \phi\right\Vert _{\left[W^{1,p}\pa{U,\omega_{0}},W^{1,p}\pa{U,\omega_{1}}\right]_{\theta}}\leq C_p\left\Vert \phi\right\Vert _{\mathcal{W}^{p}\pa{U,\theta,\rw}}
\end{equation}
 \end{theorem}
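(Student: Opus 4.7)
My plan is to split the proof into the easy inclusion \eqref{eq:easy_inclusion_thm} (together with the left inequality in \eqref{eq:main_norm}) and the hard inclusion \eqref{eq:hard_inclusion_thm} (together with the right inequality in \eqref{eq:main_norm}), concluding with a density argument. For the easy direction, I would observe that the map $T\phi=(\phi,\partial_{1}\phi,\dots,\partial_{d}\phi)$ is an isometric embedding of $W^{1,p}(U,\omega_{j})$ into the vector-valued space $L^{p}(U,\omega_{j},\C^{d+1})$ for $j=0,1$. Interpolating this single operator and invoking the vector-valued form of the Stein--Weiss theorem, which gives $\rpa{L^{p}(U,\omega_{0},\C^{d+1}),L^{p}(U,\omega_{1},\C^{d+1})}_{\theta}=L^{p}(U,\omega_{\theta},\C^{d+1})$ isometrically, yields at once that every $\phi$ in the interpolation space has $(\phi,\nabla\phi)\in L^{p}(U,\omega_{\theta},\C^{d+1})$ with $\norm{(\phi,\nabla\phi)}_{L^{p}(U,\omega_{\theta},\C^{d+1})}\le \norm{\phi}_{[\cdots]_{\theta}}$, establishing both \eqref{eq:easy_inclusion_thm} and the lower bound in \eqref{eq:main_norm}.

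For the hard inclusion I would adapt the classical Stein--Weiss interpolating family to the Sobolev context. Given $\phi\in Lip_{c}(U)$, define, for $z$ in the closed strip $\bar S=\{0\le \Re z\le 1\}$,
$$F(z)(x)=e^{(z-\theta)^{2}}\phi(x)\,r_{\omega}(x)^{(z-\theta)/p},$$
where the complex power uses the principal branch of $\log$. Because $\phi$ has compact support $K\subset U$ and $\rw$ is locally Lipschitz, $\log\rw$ is bounded and Lipschitz on $K$; hence $r_{\omega}^{(z-\theta)/p}$ is bounded and Lipschitz on $K$ for each $z\in\bar S$, so $F(z)\in Lip_{c}(U)\subset W_{0}^{1,p}(U,\omega_{j})$ for $j=0,1$. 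Trivially $F(\theta)=\phi$, and using the Leibniz rule for Lipschitz functions
$$\nabla F(z)=e^{(z-\theta)^{2}}r_{\omega}^{(z-\theta)/p}\Bigl(\nabla\phi+\tfrac{z-\theta}{p}\phi\,\nabla\log r_{\omega}\Bigr)\quad\text{a.e.\ on }U.$$
Using the identities $\omega_{\theta}=\omega_{0}r_{\omega}^{-\theta}=\omega_{1}r_{\omega}^{1-\theta}$ and $|r_{\omega}^{(z-\theta)/p}|^{p}=r_{\omega}^{\Re(z-\theta)}$, a direct computation collapses the $L^{p}(\omega_{j})$ norms of $F(j+it)$ and $\nabla F(j+it)$ to the corresponding weighted integrals against $\omega_{\theta}$, up to the explicit factors $e^{p(j-\theta)^{2}-pt^{2}}$ and the polynomial $(t^{2}+(j-\theta)^{2})^{p/2}$ coming from $|z-\theta|^{p}$. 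The Gaussian factor $e^{-pt^{2}}$ dominates the polynomial, producing
$$\sup_{t\in\R}\norm{F(j+it)}_{W^{1,p}(U,\omega_{j})}\le C_{p}\norm{\phi}_{\mathcal{W}^{p}(U,\theta,r_{\omega})},\qquad j=0,1,$$
with a constant depending only on $p$ (and harmlessly on $\theta\in(0,1)$).

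The main technical obstacle is checking that $F$ actually belongs to the Calder\'on admissible class $\mathcal{F}(W_{0}^{1,p}(U,\omega_{0}),W_{0}^{1,p}(U,\omega_{1}))$, i.e., that it is continuous and bounded on $\bar S$, holomorphic in the interior $S$, with values in the sum space $W_{0}^{1,p}(U,\omega_{0})+W_{0}^{1,p}(U,\omega_{1})$. I would handle this by expanding
$$r_{\omega}(x)^{(z-\theta)/p}=\sum_{n=0}^{\infty}\frac{(z-\theta)^{n}}{n!\,p^{n}}\bigl(\log r_{\omega}(x)\bigr)^{n},$$
and observing that, because $\log r_{\omega}$ and its distributional gradient are bounded on the compact support $K$ of $\phi$, the series together with its differentiated version converges in $W^{1,p}(U,\omega_{0})+W^{1,p}(U,\omega_{1})$-norm uniformly on compact subsets of $\bar S$; this delivers continuity on $\bar S$ and term-by-term holomorphy on $S$. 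Once admissibility is in hand, $F(\theta)=\phi$ gives $\phi\in[W_{0}^{1,p}(U,\omega_{0}),W_{0}^{1,p}(U,\omega_{1})]_{\theta}$ with the desired estimate on $Lip_{c}(U)$.

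It remains to propagate the result from $Lip_{c}(U)$ to $\mathcal{W}_{0}^{p}(U,\theta,r_{\omega})$, which is the closure of $Lip_{c}(U)$ in the $\mathcal{W}^{p}$-norm. Since $\mathcal{W}^{p}(U,\theta,r_{\omega})$ and $[W^{1,p}(U,\omega_{0}),W^{1,p}(U,\omega_{1})]_{\theta}$ both embed continuously into $L^{p}(U,\omega_{\theta})$ (the latter by the easy inclusion just established), any limit taken in either target space agrees as a function with the original Cauchy limit in $L^{p}(U,\omega_{\theta})$, so the extended bound genuinely produces \eqref{eq:hard_inclusion_thm} and the right-hand inequality in \eqref{eq:main_norm}.
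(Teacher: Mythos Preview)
Your proposal is correct and follows essentially the same approach as the paper: the easy inclusion via the isometric embedding $T\phi=(\phi,\nabla\phi)$ into $L^{p}(U,\omega_{j},\C^{d+1})$ combined with the vector-valued Stein--Weiss formula, and the hard inclusion via the holomorphic family $F(z)=e^{\beta(z-\theta)^{2}}\phi\, r_{\omega}^{(z-\theta)/p}$ (the paper keeps $\beta>0$ free and optimises), whose $\mathcal{F}$-admissibility is verified by the very power-series expansion you outline, followed by the same density argument. One small point: for membership in $\mathcal{F}$ you also need $t\mapsto F(j+it)$ to be continuous into $W^{1,p}_{0}(U,\omega_{j})$ (not merely into the sum space), so the series convergence should be asserted in the intersection norm $W^{1,p}_{0}(U,\omega_{0})\cap W^{1,p}_{0}(U,\omega_{1})$ --- which your compact-support bounds actually give.
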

 \begin{remark}\label{rem:ValueOfCpZZ}
The exact value of the constant in \eqref{eq:main_norm} interests us rather less than the fact that it is independent of our choices of weight functions and $\theta$, and depends only on $p$. But we also note that the arguments that will eventually combine to give us a proof of Theorem \ref{thm:main} will also show that the said constant can be chosen to be 
\begin{equation}\label{eq:ValueOfCp}
C_{p}=\min_{\beta>0}2e^{\beta}\max\left\{ 1,\frac{e^{\beta}}{p\sqrt{2\beta e}}\right\} .
\end{equation}
\end{remark}
\begin{remark}\label{rem:chain_of_inclusionZZ}
From the definition of $\rpa{A_0,A_1}_{\theta}$ it is obvious (cf. part \eqref{item:inter_properties_inclusion_of_norms_and_subspaces} of Theorem \ref{thm:OtherProperties}) that 
$$\left[W_{0}^{1,p}\left(U,\omega_{0}\right),W_{0}^{1,p}\left(U,\omega_{1}\right)\right]_{\theta} \subset \left[W^{1,p}\left(U,\omega_{0}\right),W^{1,p}\left(U,\omega_{1}\right)\right]_{\theta}.$$
We can thus combine this inclusion together with \eqref{eq:easy_inclusion_thm} and \eqref{eq:hard_inclusion_thm} to obtain one chain of inclusions. 
\end{remark}
Although the non-negative function $\omega_{\theta}\left\vert \nabla\log\left(r_{\omega}\right)\right\vert ^{p}$ can assume the value $0$, it will be helpful for us to permit ourselves the following slight abuse of notation:
\begin{definition}\label{def:SemiNorm}
Let $L^{p}\left(U,\omega_{\theta}\left\vert \nabla\log\left(r_{\omega}\right)\right\vert ^{p}\right)$ denote the space of measurable functions $\phi:U\to\mathbb{C}$ for which the \textit{semi}-norm, which we shall denote by 
\begin{equation}\label{eq:SemiNorm}
\left\Vert \phi\right\Vert _{L^{p}\left(U,\omega_{\theta}\left\vert \nabla \log\left(r_{\omega}\right)\right\vert ^{p}\right)}=\left(\int_{\mathbb{R}^{d}}\left\vert \phi(x)\right\vert ^{p}\left\vert \nabla\log\left(r_{\omega}(x)\right)\right\vert ^{p}\omega_{\theta}(x)dx\right)^{\frac{1}{p}}, 
\end{equation}
is finite.
\end{definition}
While Theorem \ref{thm:main} is quite general, it might also be desirable to have a ``cleaner'' version of it which deals only with spaces of the kind ``$W^{1,p}$'' or only with spaces of the kind ``$W_{0}^{1,p}$'' rather than a mixture of these two kinds. Such versions can indeed be obtained when suitable additional conditions are imposed on $U$ and/or on the weight functions $\omega_{0}$ and $\omega_{1}$. The following two theorems give instances of this.
\begin{theorem}\label{thm:main_for_W_0} 
Let $U$, $\omega_0$ and $\omega_1$ be as in Theorem \ref{thm:main}. Assume furthermore that, for some $p\in[1,\infty)$, whenever $\phi$ is an element of $W_{0}^{1,p}\left(U,\omega_{0}\right)\cap W_{0}^{1,p}\left(U,\omega_{1}\right)$ there exists a sequence $\left\{ \phi_{n}\right\} _{n\in\mathbb{N}}$ of functions in $Lip_{c}(U)$ which converges to $\phi$ in $W^{1,p}\left(U,\omega_{0}\right)$ or in $W^{1,p}\left(U,\omega_{1}\right)$ and is a bounded sequence in the other space. Then, as well as the inclusions \eqref{eq:easy_inclusion_thm} and \eqref{eq:hard_inclusion_thm}, we also have that 
\begin{equation}\label{eq:inclusion_for_W_0_main_simplified}
\mathcal{W}_{0}^{p}\left(U,\theta,r_{\omega}\right)\subset\left[W_{0}^{1,p}\left(U,\omega_{0}\right),W_{0}^{1,p}\left(U,\omega_{1}\right)\right]_{\theta}\subset W_{0}^{1,p}\left(U,\omega_{\theta}\right)
\end{equation}
for that same value of $p$ and for all $\theta\in(0,1)$.
\end{theorem}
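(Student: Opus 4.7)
The first inclusion in \eqref{eq:inclusion_for_W_0_main_simplified} is already a part of \eqref{eq:hard_inclusion_thm} of Theorem \ref{thm:main}, so the task reduces to establishing $\rpa{W_0^{1,p}\pa{U,\omega_0},W_0^{1,p}\pa{U,\omega_1}}_\theta \subset W_0^{1,p}\pa{U,\omega_\theta}$. My strategy is to show that any $\phi$ in the interpolation space lies in the closure of $Lip_c\pa{U}$ with respect to the $W^{1,p}\pa{U,\omega_\theta}$-norm, by constructing a suitable approximating sequence of $Lip_c\pa{U}$ functions in two stages.

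First, I would invoke the standard property of the complex method that for any Banach couple $\pa{B_0,B_1}$ the intersection $B_0\cap B_1$ is dense in $\rpa{B_0,B_1}_\theta$ with respect to its own norm. Applied to $\pa{W_0^{1,p}\pa{U,\omega_0},W_0^{1,p}\pa{U,\omega_1}}$, this produces a sequence $\br{\psi_n}_{n\in\N} \subset W_0^{1,p}\pa{U,\omega_0}\cap W_0^{1,p}\pa{U,\omega_1}$ with $\psi_n\to\phi$ in the interpolation norm.

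Second, I would combine the hypothesis of the theorem with the fundamental log-convexity inequality
\begin{equation}\nonumber
\norm{\eta}_{\rpa{W_0^{1,p}\pa{U,\omega_0},W_0^{1,p}\pa{U,\omega_1}}_\theta}\leq \norm{\eta}_{W^{1,p}\pa{U,\omega_0}}^{1-\theta}\norm{\eta}_{W^{1,p}\pa{U,\omega_1}}^{\theta},
\end{equation}
which holds for every $\eta$ in the intersection. For each fixed $n$, the hypothesis furnishes a sequence $\varphi_{n,k}\in Lip_c\pa{U}$ converging to $\psi_n$ in one of the $W^{1,p}\pa{U,\omega_i}$-norms and remaining bounded in the other. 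Applying the inequality to $\eta=\varphi_{n,k}-\psi_n$, the factor corresponding to the convergent norm tends to $0$ as $k\to\infty$ while the bounded factor stays controlled, so $\varphi_{n,k}\to\psi_n$ in the interpolation norm. A standard diagonal extraction then yields a sequence $\br{\varphi_n}_{n\in\N}\subset Lip_c\pa{U}$ with $\varphi_n\to \phi$ in $\rpa{W_0^{1,p}\pa{U,\omega_0},W_0^{1,p}\pa{U,\omega_1}}_\theta$.

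Finally, the obvious continuous inclusion $\rpa{W_0^{1,p}\pa{U,\omega_0},W_0^{1,p}\pa{U,\omega_1}}_\theta \subset \rpa{W^{1,p}\pa{U,\omega_0},W^{1,p}\pa{U,\omega_1}}_\theta$ (cf.\ Remark \ref{rem:chain_of_inclusionZZ}) combined with \eqref{eq:easy_inclusion_thm} upgrades this to convergence $\varphi_n\to\phi$ in $W^{1,p}\pa{U,\omega_\theta}$. Since $\varphi_n\in Lip_c\pa{U}$, this identifies $\phi$ as an element of $W_0^{1,p}\pa{U,\omega_\theta}$. I do not foresee a genuine obstacle; the only delicate point is the second stage, where the hypothesis is indispensable — without it, the separate $Lip_c\pa{U}$-approximating sequences of $\psi_n$ in the two spaces might differ drastically and no single sequence would simultaneously control both norms, which is exactly what the log-convexity inequality needs in order to produce convergence in the interpolation norm.
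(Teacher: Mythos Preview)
Your proposal is correct and follows essentially the same route as the paper: the paper's proof of Theorem \ref{thm:main_for_W_0} simply invokes Theorem \ref{thm:SimultaneousApprox}, whose proof uses exactly the ingredients you identify --- density of $W_0^{1,p}(U,\omega_0)\cap W_0^{1,p}(U,\omega_1)$ in the interpolation space, the log-convexity inequality \eqref{eq:A-Theta} applied to the approximating $Lip_c$ sequence furnished by the hypothesis, and then passage to $W^{1,p}(U,\omega_\theta)$ via \eqref{eq:easy_inclusion_thm}. The only cosmetic difference is that the paper first establishes the norm inequality \eqref{eq:easy_inequality_W_0} on the intersection and then extends by density, whereas you run a diagonal argument to produce a single $Lip_c$ sequence converging to an arbitrary $\phi$; both arrangements are equivalent.
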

\begin{theorem}\label{thm:main_for_U_including_R_d}
Let $U$, $\omega_0$ and $\omega_1$ be as in Theorem \ref{thm:main}. Assume furthermore that, for some $p\in[1,\infty)$, 
\begin{equation}\label{eq:Wdensity01}
W_{0}^{1,p}\left(U,\omega_{j}\right)=W^{1,p}\left(U,\omega_{j}\right),\quad\quad j=0,1,
\end{equation}
and
\begin{equation}\label{eq:Wdensity02}
\mathcal{W}_{0}^{p}\left(U,\theta,r_{\omega},\right)=\mathcal{W}^{p}\left(U,\theta,r_{\omega}\right),\quad\quad\theta\in(0,1).
\end{equation}
Then, as well as the inclusions \eqref{eq:easy_inclusion_thm} and \eqref{eq:hard_inclusion_thm}, we also have that 
\begin{equation}\label{eq:main_for_U_including_R_d}
\mathcal{W}^{p}\left(U,\theta,r_{\omega}\right)\subset\left[W^{1,p}\left(U,\omega_{0}\right),W^{1,p}\left(U,\omega_{1}\right)\right]_{\theta}\subset W^{1,p}\left(U,\omega_{\theta}\right)
\end{equation}
for that same value of $p$ and for all $\theta\in(0,1)$.\\ 
In particular if $U=\R^d$ and if $\omega_0$ and $\omega_1$ satisfy the conditions imposed in Theorem \ref{thm:main}, then \eqref{eq:Wdensity01} and \eqref{eq:Wdensity02} both hold for every $p\in[1,\infty)$ and therefore so does \eqref{eq:main_for_U_including_R_d}.
\end{theorem}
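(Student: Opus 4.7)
The first assertion follows directly from Theorem~\ref{thm:main}. The right inclusion in \eqref{eq:main_for_U_including_R_d} is exactly \eqref{eq:easy_inclusion_thm}. For the left inclusion, hypothesis \eqref{eq:Wdensity01} makes the couples $(W_{0}^{1,p}(U,\omega_{0}), W_{0}^{1,p}(U,\omega_{1}))$ and $(W^{1,p}(U,\omega_{0}), W^{1,p}(U,\omega_{1}))$ literally identical as Banach space couples, so their complex interpolation spaces coincide. Combining this with \eqref{eq:hard_inclusion_thm} and \eqref{eq:Wdensity02} yields
$$\mathcal{W}^{p}(U,\theta,r_{\omega}) = \mathcal{W}_{0}^{p}(U,\theta,r_{\omega}) \subset \left[W_{0}^{1,p}(U,\omega_{0}), W_{0}^{1,p}(U,\omega_{1})\right]_{\theta} = \left[W^{1,p}(U,\omega_{0}), W^{1,p}(U,\omega_{1})\right]_{\theta}.$$

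For the second assertion ($U = \mathbb{R}^{d}$), the task reduces to verifying the two density statements \eqref{eq:Wdensity01} and \eqref{eq:Wdensity02}; equivalently, I will show that $Lip_{c}(\mathbb{R}^{d})$ is norm-dense in each of $W^{1,p}(\mathbb{R}^{d},\omega_{j})$ and $\mathcal{W}^{p}(\mathbb{R}^{d},\theta,r_{\omega})$ via a single truncation-and-mollification scheme that will serve both at once. (Membership of $Lip_{c}(\mathbb{R}^{d})$ in both spaces is immediate: the compact boundedness condition gives control of $\omega_{j}$ and $\omega_{\theta}$ on the support, Rademacher supplies the weak gradient, and local Lipschitz regularity of $r_{\omega}$ together with its positivity guarantees $|\nabla\log r_{\omega}| \in L^{\infty}_{\mathrm{loc}}$.) Fix a radial cutoff $\chi \in C_{c}^{\infty}(\mathbb{R}^{d})$ with $0 \le \chi \le 1$, $\chi \equiv 1$ on $B(0,1)$ and $\mathrm{supp}(\chi) \subset B(0,2)$, and set $\chi_{R}(x) = \chi(x/R)$, so that $\|\nabla \chi_{R}\|_{\infty} \le C/R$. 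Expanding $\nabla(\chi_{R}\phi) - \nabla\phi = (\chi_{R}-1)\nabla\phi + \phi\,\nabla\chi_{R}$, every integral of the form $\int |\chi_{R}-1|^{p}|\phi|^{p}\omega_{\theta}$, $\int |\chi_{R}-1|^{p}|\nabla\phi|^{p}\omega_{\theta}$ or $\int |\chi_{R}-1|^{p}|\phi|^{p}|\nabla\log r_{\omega}|^{p}\omega_{\theta}$ tends to $0$ as $R\to\infty$ by dominated convergence (the natural majorants are integrable by the very definition of $\mathcal{W}^{p}$, respectively $W^{1,p}(\omega_{j})$), while the remaining piece is controlled by $\int |\phi|^{p}|\nabla\chi_{R}|^{p}\omega_{\theta} \le (C/R)^{p}\int_{B(0,2R)\setminus B(0,R)}|\phi|^{p}\omega_{\theta} \to 0$.

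Once $\phi$ has compact support $K$, convolving with a standard mollifier $\rho_{\epsilon}$ gives $\phi_{\epsilon} = \phi * \rho_{\epsilon} \in C_{c}^{\infty}(\mathbb{R}^{d}) \subset Lip_{c}(\mathbb{R}^{d})$, supported in a fixed compact neighbourhood $K'$ of $K$ for all small $\epsilon$. On $K'$ the compact boundedness condition bounds $\omega_{\theta}$ and $\omega_{j}$ above and below by positive constants, and by local Lipschitz regularity $|\nabla\log r_{\omega}|$ is bounded on $K'$; each relevant weighted $L^{p}$-norm on $K'$ is therefore equivalent to its unweighted counterpart, and $\phi_{\epsilon}\to\phi$ in $W^{1,p}(\mathbb{R}^{d},\omega_{j})$ and in $\mathcal{W}^{p}(\mathbb{R}^{d},\theta,r_{\omega})$ follows from the classical statement that mollification converges in $L^{p}(K')$ for both $\phi$ and $\nabla\phi$. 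The only non-automatic part of this plan is the bookkeeping for the third, ``semi-norm'' piece of $\mathcal{W}^{p}$ in the truncation step, and this is the place where the local Lipschitz hypothesis on $r_{\omega}$ plays its essential role; once that integrable majorant is identified, dominated convergence closes the argument.
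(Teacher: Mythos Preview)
Your proof is correct and follows essentially the same approach as the paper. The first part is identical: substitute the density hypotheses into the conclusions of Theorem~\ref{thm:main}. For the second part, the paper invokes a separately stated approximation theorem (Theorem~\ref{thm:W_p_is_W_p_0}, proved via the truncation functions $\xi_n$ of Lemma~\ref{lem:compact_apporximation_general} and the mollification of Lemma~\ref{lem:mollifier}), whereas you write out the same truncation-then-mollification scheme inline with a smooth cutoff $\chi_R$ in place of the piecewise-linear $\xi_n$; the dominated-convergence and compact-boundedness arguments you sketch are exactly those the paper uses.
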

With the help of Theorems \ref{thm:main_for_W_0} and \ref{thm:main_for_U_including_R_d} we are able to obtain the following theorem which identifies new conditions which suffice for a Stein-Weiss like theorem to hold for pairs of $W^{1,p}$ spaces: 
\begin{theorem}\label{thm:Stein_Weiss_for_W}
Let $U$ be a non-empty open subset of $\mathbb{R}^{d}$ and let $g:U\to\mathbb{R}$ be a Lipschitz function. Let $\omega_{0}$ and $\omega_{1}$ be weight functions on $U$ such that at least one of them satisfies the compact boundedness condition and 
$$
\omega_0(x)=\omega_1(x)e^{g(x)}.
$$
\begin{enumerate}[(i)]
\item\label{item:conditions_are_valid} Then the conditions of Theorem \ref{thm:main} are satisfied.
\item\label{item:SW_on_Rd} If $U=\mathbb{R}^{d}$ then
\begin{equation}\label{eq:SteinWeissOnRd}
\left[W^{1,p}\left(\mathbb{R}^{d},\omega_{0}\right),W^{1,p}\left(\mathbb{R}^{d},\omega_{1}\right)\right]_{\theta}=W^{1,p}\left(\mathbb{R}^{d},\omega_{\theta}\right)
\end{equation}
to within equivalence of norms, for every $p\in [1,\infty)$ and $\theta\in (0,1)$.
\item\label{item:SW_on_W0} If the open set $U$ and the above two weight functions $\omega_{0}$ and $\omega_{1}$ satisfy the conditions of Theorem \ref{thm:main_for_W_0} then 
$$ \left[W_{0}^{1,p}\left(U,\omega_{0}\right),W_{0}^{1,p}\left(U,\omega_{1}\right)\right]_{\theta}=W_{0}^{1,p}\left(U,\omega_{\theta}\right)$$
to within equivalence of norms, for every $p\in [1,\infty)$ and $\theta\in (0,1)$.
\item\label{item:SW_on_U} If the number $p$, the open set $U$ and the above two weight functions $\omega_{0}$ and $\omega_{1}$ satisfy the conditions of Theorem \ref{thm:main_for_U_including_R_d} then 
$$ \left[W^{1,p}\left(U,\omega_{0}\right),W^{1,p}\left(U,\omega_{1}\right)\right]_{\theta}=W^{1,p}\left(U,\omega_{\theta}\right)$$
to within equivalence of norms, for that value of $p$ and for every $\theta\in (0,1)$.
\end{enumerate}
\end{theorem}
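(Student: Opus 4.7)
The plan is to reduce all four parts to Theorems \ref{thm:main}, \ref{thm:main_for_W_0}, and \ref{thm:main_for_U_including_R_d} by exploiting the very simple structure $r_\omega = \omega_0/\omega_1 = e^g$ with $g$ Lipschitz. For part (\ref{item:conditions_are_valid}), since $g$ is continuous on $U$, the function $e^g$ is bounded above and strictly bounded away from zero on every compact $K \subset U$; hence the identity $\omega_0 = \omega_1 e^g$ propagates compact boundedness from whichever of the two weights has it to the other. For local Lipschitzness of $r_\omega = e^g$: on any compact $K \subset U$, $g$ is bounded, $\exp$ is Lipschitz on the bounded range of $g|_K$, and a composition of Lipschitz functions is Lipschitz. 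Thus $r_\omega$ is locally Lipschitz on $U$ and every hypothesis of Theorem \ref{thm:main} is verified.

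The central observation driving parts (\ref{item:SW_on_Rd})--(\ref{item:SW_on_U}) is that $\log r_\omega = g$, so by Rademacher's theorem $g$ is differentiable a.e.\ and $|\nabla g| \leq L$ almost everywhere, where $L$ is the global Lipschitz constant of $g$ on $U$. Substituting this bound into the seminorm of Definition \ref{def:SemiNorm} yields, for every measurable $\phi:U\to\mathbb{C}$,
$$\int_{U}|\phi(x)|^{p}\,|\nabla \log r_\omega(x)|^{p}\,\omega_\theta(x)\,dx \;\leq\; L^{p}\int_{U}|\phi(x)|^{p}\,\omega_\theta(x)\,dx.$$
Consequently $\mathcal{W}^{p}(U,\theta,r_\omega) = W^{1,p}(U,\omega_\theta)$ as sets, with the two norms equivalent up to a constant depending only on $L$ and $p$. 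Applying this equivalence to the common dense subspace $Lip_c(U)$ and passing to closures gives, under the hypotheses of Definition \ref{def:W_0}, $\mathcal{W}_0^{p}(U,\theta,r_\omega) = W_0^{1,p}(U,\omega_\theta)$ with equivalent norms.

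With these identifications in hand the remaining parts are essentially bookkeeping. For (\ref{item:SW_on_Rd}), the last clause of Theorem \ref{thm:main_for_U_including_R_d} states that on $U=\mathbb{R}^d$ the density hypotheses \eqref{eq:Wdensity01} and \eqref{eq:Wdensity02} are automatic once the hypotheses of Theorem \ref{thm:main} hold, which we established in (\ref{item:conditions_are_valid}). Therefore \eqref{eq:main_for_U_including_R_d} gives the chain
$$\mathcal{W}^{p}(\mathbb{R}^d,\theta,r_\omega)\;\subset\;[W^{1,p}(\mathbb{R}^d,\omega_0),W^{1,p}(\mathbb{R}^d,\omega_1)]_\theta\;\subset\;W^{1,p}(\mathbb{R}^d,\omega_\theta),$$
and the norm equivalence of the two extremes, together with the quantitative bound \eqref{eq:main_norm} on the interpolation norm, collapses the chain to the equality \eqref{eq:SteinWeissOnRd}. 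For parts (\ref{item:SW_on_W0}) and (\ref{item:SW_on_U}) I would apply Theorem \ref{thm:main_for_W_0} or Theorem \ref{thm:main_for_U_including_R_d} respectively; their hypotheses are assumed, they produce analogous two-sided chains with $\mathcal{W}_0^{p}$ (resp.\ $\mathcal{W}^{p}$) on the left and $W_0^{1,p}(U,\omega_\theta)$ (resp.\ $W^{1,p}(U,\omega_\theta)$) on the right, and the norm equivalences from the previous paragraph collapse these to the desired equalities.

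I do not anticipate a serious obstacle: once the Rademacher bound $|\nabla\log r_\omega|\leq L$ is in hand, the rest is inclusion chasing and invocation of the previously established theorems. The deepest ingredient being used is the density statement in the second half of Theorem \ref{thm:main_for_U_including_R_d}, which is taken as given; if anything subtle arises it will be at the level of making sure that the $Lip_c$-closures in the equivalent norms $\|\cdot\|_{\mathcal{W}^{p}}$ and $\|\cdot\|_{W^{1,p}(\omega_\theta)}$ genuinely coincide, but this is immediate from the equivalence being an honest two-sided comparison of norms on all of $\mathcal{W}^p = W^{1,p}(\omega_\theta)$.
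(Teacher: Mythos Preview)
Your proposal is correct and follows essentially the same route as the paper: propagate compact boundedness via the continuity and positivity of $e^{g}$, obtain that $r_\omega=e^{g}$ is locally Lipschitz, then use the pointwise bound $|\nabla\log r_\omega|=|\nabla g|\le L$ a.e.\ to identify $\mathcal{W}^{p}(U,\theta,r_\omega)$ with $W^{1,p}(U,\omega_\theta)$ (and hence also the $Lip_c$-closures), and finally invoke the sandwich inclusions of Theorems \ref{thm:main}, \ref{thm:main_for_W_0}, and \ref{thm:main_for_U_including_R_d}. The paper's proof is organized identically, with the same key estimate $\|\phi\|_{\mathcal{W}^{p}(\theta,r_\omega)}\le(1+\|\nabla g\|_{L^\infty}^{p})^{1/p}\|\phi\|_{W^{1,p}(\omega_\theta)}$.
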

\subsection{Structure of the paper}\label{subsec:structureZZ} In \S\ref{sec:pre} we will recall some basic definitions and facts about interpolation spaces, and in particular Calder\'on's complex interpolation spaces. In \S\ref{sec:simple_inclusion} we will show how a generalisation of the Stein-Weiss theorem easily implies the inclusion of $\rpa{W^{1,p}\pa{U,\omega_0},W^{1,p}\pa{U,\omega_1}}_{\theta} $ in $W^{1,p}\pa{U,\omega_\theta}$ and we shall also consider the analogous inclusion for the $W^{1,p}_0$ spaces. \S\ref{sec:difficult_inclusion} is the main section of this work. In it we will discuss the more difficult inclusion of $\W^p_{0}\pa{U,\theta,\rw}$ in $\rpa{W^{1,p}\pa{U,\omega_0},W^{1,p}\pa{U,\omega_1}}_{\theta} $, which will eventually yield our main result. With the help of some approximation theorems which we will present in \S\ref{sec:approximation}, we will prove our main theorem and several interesting consequences of it in \S\ref{sec:proof}. In \S \ref{sec:more_about_WW_p} we will discuss further properties of $\W^{p}\pa{U,\theta,\rw}$, such as the fact that it can sometimes happen that $\W^{p}\pa{U,\theta,\rw}$ is strictly smaller than $W^{1,p}\pa{U,\omega_\theta}$. In the final section \S\ref{sec:final} we will share some final thoughts about this, and future, work. These will include a discussion of potential applications to rates of convergence in evolution equations.\\
There is also an appendix at the end of the paper. It contains some auxiliary proofs which we felt should not hinder the flow of our presentation in the preceding sections.
\subsection*{Acknowledgement} A. Einav would like to thank Anton Arnold and Tobias W\"ohrer for stimulating discussions about using interpolation theory to estimate rates of convergence to equilibrium in Fokker-Planck equations, which led him to start investigating the main problem presented in this work.
\section{Preliminaries: Interpolation spaces and the complex interpolation method}\label{sec:pre}
This short section has been included for the convenience of those readers who may not be familiar with the theory of interpolation spaces, in particular of those which are generated by the \textit{complex interpolation method} (sometimes referred to simply as the \textit{complex method}). We briefly recall some basic definitions and some basic properties of these interpolation spaces. For more information about this subject we refer the reader to Calder\'on's fundamental paper \cite{Calder} or to \cite{BL} or many other excellent textbooks. The notations and definitions which we shall use are fairly standard.\footnote{But note that there are some differences between the notation used in \cite{Calder} and in \cite{BL}.}\\
The basic setting of interpolation theory is a pair of normed vector spaces $A_0$ and $A_1$ which have the property that there exists a Hausdorff topological vector space $\mathcal{A}$ satisfying 
$$A_0\subset \mathcal{A},\quad\quad A_1\subset \mathcal{A},$$
where the inclusions of $A_0$ and $A_1$ in $\mathcal{A}$ are linear and continuous. If $A_0$ and $A_1$ are also Banach spaces, then it is customary to say that $\pa{A_0,A_1}$ is a \textit{Banach couple}, and also to sometimes use the abbreviated notation $\overline{A}$ instead of $(A_0,A_1)$. 
\begin{definition}\label{def:interpolationZZ}
Let $\overline{A}=(A_0,A_1)$ be a Banach couple. A normed vector space $A$ is called an \textit{intermediate space} with respect to $(A_0,A_1)$ if 
$$A_0\cap A_1 \subset A \subset A_0+A_1$$
with continuous inclusions, where the norms on $A_0\cap A_1$ and $A_0+A_1$ are given by
$$\norm{a}_{A_0\cap A_1}= \max\pa{\norm{a}_{A_0},\norm{a}_{A_1}}$$
and
$$\norm{a}_{A_0+ A_1}= \inf\br{\norm{a_0}_{A_0}+\norm{a_1}_{A_1}\;|\; a_0+a_1=a}.$$
The notation $T:\left(A_{0},A_{1}\right)\to\left(A_{0},A_{1}\right)$ (or $T:\overline{A}\to\overline{A}$ for short) is taken to mean that $T$ is a linear operator which maps $A_{0}+A_{1}$ into itself and also satisfies $T(A_{0})\subset A_{0}$ and $T(A_{1})\subset A_{1}$ and is bounded from $A_{0}$ into $A_{0}$ and from $A_{1}$ into $A_{1}$.\\
An intermediate space $A$ is called an \textit{interpolation space} with respect to $(A_0,A_1)$ if it has the additional property that every linear operator $T:(A_0,A_1)\to(A_0,A_1)$ is also a bounded map of $A$ into itself. 
\end{definition}
We often need to consider a more elaborate version of the previous definition, for operators which map between the spaces of two possibly different Banach couples:
\begin{definition}\label{def:RelInterpZZ}
Let $\overline{A}=\left(A_{0},A_{1}\right)$ and $\overline{B}=\left(B_{0},B_{1}\right)$ be Banach couples. The notation $T:\left(A_{0},A_{1}\right)\to\left(B_{0},B_{1}\right)$ (or $T:\overline{A}\to\overline{B}$ for short) is taken to mean that $T$ is a linear operator which maps $A_{0}+A_{1}$ into $B_{0}+B_{1}$ and also satisfies $T(A_{0})\subset B_{0}$ and $T(A_{1})\subset B_{1}$ and is bounded from $A_{0}$ into $B_{0}$ and from $A_{1}$ into $B_{1}$. Let $A$ be an intermediate space with respect to $\left(A_{0},A_{1}\right)$ and let $B$ be an intermediate space with respect to $\left(B_{0},B_{1}\right)$. Then we say that $A$ and $B$ are relative interpolation spaces with respect to $\left(A_{0},A_{1}\right)$ and $\left(B_{0},B_{1}\right)$ if every linear operator $T :\left(A_{0},A_{1}\right)\to\left(B_{0},B_{1}\right)$ is a bounded map of $A$ into $B$.
\end{definition}
There are many methods for creating interpolation spaces with respect to a Banach couple and relative interpolation spaces with respect to a pair of Banach couples. In our work here we will only concern ourselves with the so-called \textit{complex method}, which was introduced and developed by Calder\'on in \cite{Calder}, and which we will now describe.\\ 
In what follows we will let $\St$ denote the infinite strip $\br{z\in \C\; | \; 0\leq \text{Re} z \leq 1}$ in the complex plane.
\begin{definition}\label{def:complex_method}
Let $\overline{A}=(A_0,A_1)$ be a Banach couple of complex Banach spaces. We define $\F\pa{A_0,A_1}$, also denoted by $\F\pa{\overline{A}}$, to be the space 
\begin{equation}\nonumber
\begin{split}
\F\pa{\overline{A}}=\Bigg\{f:\St\rightarrow &A_0+A_1\;|\; f\text{ is continuous and bounded on }\St\text{ and analytic on }\text{int}\pa{\St}\\
&t \rightarrow f(j+it)\text{ is a continuous bounded map of }\R\text{ into }A_j,\text{ for }j=0,1 \Bigg\},
\end{split}
\end{equation}
and define a norm on $\F\pa{\overline{A}}$ by
$$\norm{f}_{\F\pa{\overline{A}}}=\max \pa{\sup_{t\in\R}\norm{f(it)}_{A_0},\sup_{t\in\R}\norm{f(1+it)}_{A_1}}.$$
It can very readily be shown (using e.g., obvious very minor modifications of the argument in paragraph 22 of \cite[p. 129]{Calder}) that $\F\pa{\overline{A}}$ with the above norm is a Banach space, and we will make use of this fact.\\
For each given $\theta\in[0,1]$ we define $\rpa{A_0,A_1}_\theta$ to be the linear space of all elements of $a\in A_0+A_1$ for which there exists $f\in\F\pa{\overline{A}}$ such that $a=f(\theta)$. This space, when equipped with the norm 
$$\norm{a}_{[A_0,A_1]_\theta}=\inf \br{\norm{f}_{\F\pa{\overline{A}}}\;|\;f\in \F\pa{\overline{A}},\; f(\theta)=a},$$
is a Banach space.
\end{definition}
The basic and very useful interpolation property of the spaces defined by the previous definition is expressed by the following theorem of Calder\'on. (See \S4 of \cite[p. 115]{Calder}, also e.g., Theorem 4.1.2 of \cite[p. 88]{BL}.)
\begin{theorem}\label{thm:interpolation_thm_basic} 
Let $\overline{A}=(A_{0},A_{1})$ and $\overline{B}=\left(B_{0},B_{1}\right)$ be Banach couples of complex Banach spaces. Then $\left[A_{0},A_{1}\right]_{\theta}$ and $\left[B_{0},B_{1}\right]_{\theta}$ are relative interpolation spaces with respect to $\overline{A}$ and $\overline{B}$ for each $\theta\in(0,1)$. Furthermore, for each linear operator $T:\overline{A}\to\overline{B}$, the norms of $T$ as a operator from $A_{0}$ to $B_{0}$, from $A_{1}$ to $B_{1}$ and from $\left[A_{0},A_{1}\right]_{\theta}$ to $\left[B_{0},B_{1}\right]_{\theta}$ satisfy the inequality
$$\left\Vert T\right\Vert _{\mathcal{B}\left(\left[A_{0},A_{1}\right]_{\theta},\left[B_{0},B_{1}\right]_{\theta}\right)}\leq\left\Vert T\right\Vert _{\mathcal{B}\left(A_{0},B_{0}\right)}^{1-\theta}\left\Vert T\right\Vert _{\mathcal{B}\left(A_{1},B_{1}\right)}^{\theta}.$$
\end{theorem}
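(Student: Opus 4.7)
The plan is to deduce both claims simultaneously by the classical device of composing with an extremal function and twisting by a scalar exponential. Fix $\theta \in (0,1)$, a linear operator $T:\overline{A}\to\overline{B}$ in the sense of Definition \ref{def:RelInterpZZ}, and an element $a\in [A_0,A_1]_\theta$. By the very definition of $[A_0,A_1]_\theta$, given any $\varepsilon>0$ we may pick $f\in\mathcal{F}(\overline{A})$ with $f(\theta)=a$ and $\|f\|_{\mathcal{F}(\overline{A})}\le (1+\varepsilon)\|a\|_{[A_0,A_1]_\theta}$. The first step is to show that the composition $T\circ f$ belongs to $\mathcal{F}(\overline{B})$; this immediately gives $Ta=(T\circ f)(\theta)\in[B_0,B_1]_\theta$ and therefore the relative interpolation property.

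To verify $T\circ f\in\mathcal{F}(\overline{B})$, I would note that since $T$ is bounded from $A_0$ into $B_0$ and from $A_1$ into $B_1$, it is automatically bounded from $A_0+A_1$ into $B_0+B_1$ with $\|Tx\|_{B_0+B_1}\le \max(\|T\|_{\mathcal{B}(A_0,B_0)},\|T\|_{\mathcal{B}(A_1,B_1)})\|x\|_{A_0+A_1}$. Continuity and boundedness of $T\circ f$ on $\mathbb{S}$ then follow from the corresponding properties of $f$, and analyticity on $\mathrm{int}(\mathbb{S})$ follows because composing an $A_0+A_1$-valued analytic function with a bounded linear operator into $B_0+B_1$ preserves analyticity (differentiate the difference quotient inside $T$). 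On the boundary, $t\mapsto f(j+it)$ is a bounded continuous map into $A_j$, so $t\mapsto Tf(j+it)$ is a bounded continuous map into $B_j$, for $j=0,1$. Thus $T\circ f\in\mathcal{F}(\overline{B})$ with
\[
\|T\circ f\|_{\mathcal{F}(\overline{B})}\le \max\bigl(\|T\|_{\mathcal{B}(A_0,B_0)},\,\|T\|_{\mathcal{B}(A_1,B_1)}\bigr)\,\|f\|_{\mathcal{F}(\overline{A})}.
\]
This already proves that $[A_0,A_1]_\theta$ and $[B_0,B_1]_\theta$ are relative interpolation spaces, but only with the crude max bound.

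To upgrade to the logarithmically convex estimate, I would introduce the auxiliary function
\[
g(z)\;=\;\lambda^{\,z-\theta}\,T\bigl(f(z)\bigr),\qquad z\in\mathbb{S},
\]
where $\lambda>0$ is a positive real parameter to be chosen and $\lambda^{z-\theta}:=\exp((z-\theta)\log\lambda)$. Since multiplication by the bounded analytic scalar $\lambda^{z-\theta}$ preserves membership in $\mathcal{F}(\overline{B})$, we have $g\in\mathcal{F}(\overline{B})$ and $g(\theta)=Ta$. Writing $M_j:=\|T\|_{\mathcal{B}(A_j,B_j)}$ for $j=0,1$, direct computation on the two boundary lines gives $|\lambda^{it-\theta}|=\lambda^{-\theta}$ and $|\lambda^{1+it-\theta}|=\lambda^{1-\theta}$, so
\[
\|g\|_{\mathcal{F}(\overline{B})}\le \max\bigl(\lambda^{-\theta}M_0,\;\lambda^{1-\theta}M_1\bigr)\,\|f\|_{\mathcal{F}(\overline{A})}.
\]
Choosing $\lambda=M_0/M_1$ balances the two expressions and makes each equal to $M_0^{1-\theta}M_1^{\theta}$; taking the infimum over admissible $f$ and letting $\varepsilon\to 0$ yields
\[
\|Ta\|_{[B_0,B_1]_\theta}\le M_0^{1-\theta}M_1^{\theta}\,\|a\|_{[A_0,A_1]_\theta},
\]
which is the claimed inequality.

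The only genuinely delicate points are the degenerate cases $M_0=0$ or $M_1=0$: here the above choice of $\lambda$ is ill-defined, but one handles these by perturbing, replacing $M_j$ with $M_j+\delta$ for $\delta>0$ and letting $\delta\downarrow 0$, or by directly observing that if $M_0=0$ then $Tf(it)=0$ for all $t$ and a Phragmén--Lindelöf / three-lines argument inside the strip forces $Ta=0$, so the inequality holds trivially (and symmetrically if $M_1=0$). I do not expect any real obstacle: the main content is the twist by $\lambda^{z-\theta}$, a standard application of the idea behind the Hadamard three-lines lemma, and the verification that $T\circ f\in\mathcal{F}(\overline{B})$ is routine bookkeeping once one uses that $T$ is continuous on $A_0+A_1$.
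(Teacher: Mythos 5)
Your proof is correct, and it is essentially the classical argument that the paper itself does not reproduce but instead cites (Calder\'on \S4 and Theorem 4.1.2 of Bergh--L\"ofstr\"om): compose an almost-extremal $f\in\mathcal{F}(\overline{A})$ with $T$, check membership in $\mathcal{F}(\overline{B})$, and twist by the scalar $\lambda^{z-\theta}$ with $\lambda=M_0/M_1$ to balance the two boundary lines. Your handling of the degenerate cases $M_j=0$ by perturbation is also the standard fix, so there is nothing to add.
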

\begin{remark}
The above definition, describing the complex method for creating interpolation spaces, indicates why one may think of such spaces as 'in between' spaces. The original spaces $A_0$ and $A_1$ are somehow identified with elements of $\F\pa{\overline{A}}$ when restricted to $\text{Re} z=0$ or $\text{Re} z=1$. The $\theta -$interpolation space $\rpa{A_0,A_1}_\theta$ is identified with elements of $\F\pa{\overline{A}}$ restricted to the \textit{intermediate line} $\text{Re} z=\theta$.
\end{remark}
\begin{remark}\label{rem:difference_in_def}
Our definition here of the space $\F\pa{\overline{A}}$, which is often, but not always, the definition chosen to appear in papers about the complex interpolation method is in fact slightly different from the original definition appearing in \cite{Calder} in that we have not required that $\lim_{\left|t\right|\to\infty}\left\Vert f(j+it)\right\Vert _{A_{j}}=0$ for $j\in\left\{ 0,1\right\} $. However it is well known (implicit already in \cite{Calder} and mentioned e.g. in \cite[p.~1007]{Cwikel}) and very easy to check (by multiplying functions in $\mathcal{F}(\overline{A})$ by $e^{\beta(z-\theta)^{2}}$ for arbitrarily small positive $\beta$) that this difference does not effect the definition of $\left[A_{0},A_{1}\right]_{\theta}$ nor its norm.
\end{remark}
Here are some more properties of complex interpolation spaces which will be relevant for our purposes:
\begin{theorem}\label{thm:OtherProperties}
Let $\left(A_{0},A_{1}\right)$ be a Banach couple of complex Banach spaces and let $\theta$ be a number in $(0,1)$. Then 
\begin{enumerate}[(i)]
\item\label{item:inter_properties_dense} $A_{0}\cap A_{1}$ is a dense subspace of $\left[A_{0},A_{1}\right]_{\theta}$,\\ 
and
\item\label{item:inter_properties_norm_control} each element $a$ of $A_{0}\cap A_{1}$ satisfies the inequality 
\begin{equation}\label{eq:A-Theta}
\left\Vert a\right\Vert _{\left[A_{0},A_{1}\right]_{\theta}}\le\left\Vert a\right\Vert _{A_{0}}^{1-\theta}\left\Vert a\right\Vert _{A_{1}}^{\theta}.
\end{equation} 
\item\label{item:inter_properties_inclusion_of_norms_and_subspaces} Suppose, furthermore, that, for $j\in\left\{ 0,1\right\} $, $B_{j}$ is a closed subspace of $A_{j}$ equipped with the same norm as $A_{j}$. Then $\left(B_{0},B_{1}\right)$ is a Banach couple, and $\left[B_{0},B_{1}\right]_{\theta}\subset\left[A_{0},A_{1}\right]_{\theta}$, and each $b\in\left[B_{0},B_{1}\right]_{\theta}$ satisfies 
$$
\left\Vert b\right\Vert _{\left[A_{0},A_{1}\right]_{\theta}}\le\left\Vert b\right\Vert _{\left[B_{0},B_{1}\right]_{\theta}}.
$$  
\item\label{item:inter_properties_A} If $A_{0}=A_{1}$ with equality of norms, then $\left[A_{0},A_{1}\right]_{\theta}=A_{0}=A_{1}$ with equality of norms.
\end{enumerate}
\end{theorem}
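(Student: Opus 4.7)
The plan is to prove the four items in an order that lets the later ones lean on the earlier ones, since (ii) feeds into (iv) and most of the real work lies in (i).

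I would start with part \eqref{item:inter_properties_inclusion_of_norms_and_subspaces}, which is essentially a tautology from the definition. First I would verify that $(B_0,B_1)$ is a Banach couple: closed subspaces of Banach spaces are Banach, and $B_0,B_1$ embed continuously into the Hausdorff topological vector space $A_0+A_1$. Then, for any $b\in[B_0,B_1]_\theta$ and any $g\in\F(\overline{B})$ with $g(\theta)=b$, since the norm of $B_j$ agrees with the norm of $A_j$, $g$ lies in $\F(\overline{A})$ with exactly the same $\F$-norm. Taking an infimum over such $g$ gives $\|b\|_{[A_0,A_1]_\theta}\le\|b\|_{[B_0,B_1]_\theta}$, which in particular yields the continuous inclusion.

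Next I would prove part \eqref{item:inter_properties_norm_control}. The idea is to exhibit a single explicit element of $\F(\overline{A})$ that realizes the claimed bound. Given $a\in A_0\cap A_1$, if $\|a\|_{A_0}$ or $\|a\|_{A_1}$ equals $0$, then $a=0$ and there is nothing to prove. Otherwise, set $\lambda=\|a\|_{A_0}/\|a\|_{A_1}$ and define $f(z)=\lambda^{z-\theta}a$. This $f$ is clearly in $\F(\overline{A})$ (it is even entire, with values in $A_0\cap A_1$), and a direct computation gives
\[
\sup_{t\in\R}\|f(it)\|_{A_0}=\lambda^{-\theta}\|a\|_{A_0}=\|a\|_{A_0}^{1-\theta}\|a\|_{A_1}^{\theta},\qquad\sup_{t\in\R}\|f(1+it)\|_{A_1}=\lambda^{1-\theta}\|a\|_{A_1}=\|a\|_{A_0}^{1-\theta}\|a\|_{A_1}^{\theta},
\]
so $\|a\|_{[A_0,A_1]_\theta}\le\|f\|_{\F(\overline{A})}=\|a\|_{A_0}^{1-\theta}\|a\|_{A_1}^{\theta}$.

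For part \eqref{item:inter_properties_A}, when $A_0=A_1=:A$ isometrically, the inequality in \eqref{eq:A-Theta} applied to $a\in A_0\cap A_1=A$ already gives $\|a\|_{[A_0,A_1]_\theta}\le\|a\|_A$. For the reverse direction, I would apply the classical three-lines lemma to the scalar function $z\mapsto\ell(f(z))$ for any $\ell\in A^*$ and any $f\in\F(\overline{A})$ with $f(\theta)=a$: this function is bounded and analytic on the strip, so $|\ell(a)|=|\ell(f(\theta))|\le\|\ell\|_{A^*}\|f\|_{\F(\overline{A})}$. Taking a supremum over $\ell$ in the unit ball of $A^*$ and an infimum over $f$ yields $\|a\|_A\le\|a\|_{[A_0,A_1]_\theta}$.

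The main obstacle is part \eqref{item:inter_properties_dense}. The strategy is to show that the set of $f\in\F(\overline{A})$ whose values already lie in $A_0\cap A_1$ at every point of $\St$ (or at least of the line $\mathrm{Re}\,z=\theta$) is dense in $\F(\overline{A})$; evaluating at $\theta$ then gives density of $A_0\cap A_1$ in $[A_0,A_1]_\theta$. Concretely, given $f\in\F(\overline{A})$ and $\epsilon>0$, I would first use the Gaussian multiplier trick of Remark \ref{rem:difference_in_def}, replacing $f$ by $z\mapsto e^{\eta(z-\theta)^2}f(z)$ with small $\eta>0$; this changes the $\F(\overline{A})$-norm by an arbitrarily small amount, but now forces boundary values to decay at infinity. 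Next I would regularize in the $\mathrm{Im}\,z$ direction by a Fej\'er- or Poisson-type convolution on the strip, producing approximants whose values on $\partial\St$ are trigonometric-type sums with coefficients in $A_0$ and $A_1$ respectively. A final decomposition step — using the fact that on each horizontal line the approximation kernel averages both boundary values — puts the coefficients into $A_0\cap A_1$. The hardest technical point is controlling this convolution so that the approximants remain in $\F(\overline{A})$ with the correct norm bounds; this is the step where I would essentially reproduce Calder\'on's argument from \S 9 of \cite{Calder} (cf.\ Theorem 4.2.2 of \cite{BL}) rather than reinvent it.
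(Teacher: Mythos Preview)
Your proposal is correct and essentially matches the paper's treatment: the paper does not give a self-contained proof but dispatches (i) by citing Calder\'on \cite{Calder} and Bergh--L\"ofstr\"om \cite{BL} (exactly the sources you invoke), proves (ii) via the same test function $f(z)=a/\|a\|_{A_0}^{1-z}\|a\|_{A_1}^{z}$ (your $\lambda^{z-\theta}a$ is the same thing up to a constant multiple), and declares (iii) immediate from the definition. The only minor divergence is in (iv): the paper says it follows from the definition together with part (i), whereas you bypass (i) and argue directly via Hahn--Banach and the scalar three-lines lemma; both routes are standard and amount to the same underlying Phragm\'en--Lindel\"of estimate.
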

For the statement and proof of \eqref{item:inter_properties_dense} we refer to page 116 and then pages 132--133 of \cite{Calder}, or to Theorem 4.2.2 and Lemma 4.2.3 of \cite[pp. 91--92]{BL}. The rather obvious, immediate and well known proof of \eqref{item:inter_properties_norm_control} proceeds via the function $f(z)=a/\left\Vert a\right\Vert _{A_{0}}^{1-z}\left\Vert a\right\Vert _{A_{1}}^{z}$ which is a norm one element of $\mathcal{F}\left(A_{0},A_{1}\right)$. Part \eqref{item:inter_properties_inclusion_of_norms_and_subspaces} is an immediate consequence of Definition \ref{def:complex_method}. Part \eqref{item:inter_properties_A} is an obvious consequence of Definition \ref{def:complex_method} and part \eqref{item:inter_properties_dense}.
\begin{remark}\label{rem:WhyInfinityIsTrivial}
We note that part \eqref{item:inter_properties_A} of this theorem combined with Remark \ref{rem:Unweighted}, justifies the claim made in Remark \ref{rem:InfinityIsTrivial}. \end{remark}
With the basics of interpolation theory and the complex method in hand, we can start proving our main theorem.
\section{The simpler inclusion}\label{sec:simple_inclusion}
Before beginning our systematic study of the interpolation spaces 
$$\left[W^{1,p}\left(U,\omega_{0}\right),W^{1,p}\left(U,\omega_{1}\right)\right]_{\theta}$$ 
we of course need to be sure that 
\begin{equation}\label{eq:IsBanachCouple}
\left(W^{1,p}\left(U,\omega_{0}\right),W^{1,p}\left(U,\omega_{1}\right)\right)\mbox{ is a Banach couple,}
\end{equation}
when we either assume that $W^{1,p}\left(U,\omega_{0}\right)$ and $W^{1,p}\left(U,\omega_{1}\right)$ are both Banach spaces or impose explicit conditions on $U$, $\omega_{0}$ and $\omega_{1}$ which guarantee that. In fact (\ref{eq:IsBanachCouple}) holds under such assumptions, because both of the spaces $W^{1,p}\left(U,\omega_{0}\right)$ and $W^{1,p}\left(U,\omega_{1}\right)$ are continuously embedded in the Banach space $L^{p}(U,\omega)$ when $\omega$ is chosen to be the weight function $\omega=\min\left\{ \omega_{0},\omega_{1}\right\}$.\\
Our main theorem, Theorem \ref{thm:main}, establishes two inclusions. The reader has probably already noticed that one of them, \eqref{eq:easy_inclusion_thm}, is actually an obvious and almost immediate consequence of the Stein-Weiss theorem, or of a slightly generalised version of it due to Calder\'on. Nevertheless, we will provide an explicit proof of that inclusion in this section. We will also consider the case where this inclusion can be generalised to the $W_0^{1,p}$ spaces.\\
Here is the version that we need of Calder\'on's generalization of the Stein-Weiss theorem.
\begin{theorem}\label{thm:calder}
The formula
\begin{equation}\label{eq:calder_thm}
\rpa{L^p\pa{U,\omega_0,\C^m},L^p\pa{U,\omega_1,\C^m}}_{\theta}=L^p\pa{U,\omega_\theta,\C^m},
\end{equation}
holds with equality of norms for every $m\in\N$, every $p\in [1,\infty)$, every $\theta\in (0,1)$, every open subset $U$ of $\R^d$, and all pairs of weight functions $\omega_0$ and $\omega_1$ on $U$, when $\omega_\theta$ is defined by \eqref{eq:omegatheta}.
\end{theorem}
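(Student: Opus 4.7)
The plan is to reduce this vector-valued statement to the scalar Stein--Weiss theorem recalled in the Introduction, which, as noted there, is valid on an arbitrary measure space and so applies beyond $(U,dx)$. The key observation is that vector-valued functions with the $L^p$-norm of Definition \ref{def:L^p_w} can be represented as scalar functions on an enlarged measure space, and that the complex method respects isometric isomorphisms of Banach couples.

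First I would introduce, for each weight $\omega$ on $U$, the product space $\tilde{U} := U \times \{1,\ldots,m\}$, equipped with the product of Lebesgue measure on $U$ and counting measure on $\{1,\ldots,m\}$, together with the weight $\tilde\omega(x,i) := \omega(x)$. A direct computation from Definition \ref{def:L^p_w} shows that the map $\Phi$ defined by $\Phi(\phi_1,\dots,\phi_m)(x,i) := \phi_i(x)$ is an isometric isomorphism from $L^p\pa{U,\omega,\C^m}$ onto the scalar-valued space $L^p(\tilde U, \tilde\omega)$. Since the definition of $\Phi$ does not refer to $\omega$, the same map produces an isometric isomorphism of Banach couples
$$\Phi : \pa{L^p\pa{U,\omega_0,\C^m}, L^p\pa{U,\omega_1,\C^m}} \longrightarrow \pa{L^p(\tilde U, \tilde\omega_0), L^p(\tilde U, \tilde\omega_1)}.$$

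Next I would invoke the functoriality of the complex method under such isometric isomorphisms of couples. Namely, pointwise composition with $\Phi$ on $\St$ carries $\F$ of the first couple isometrically onto $\F$ of the second; this is immediate from Definition \ref{def:complex_method}, since membership in $\F\pa{\overline{A}}$ and its norm depend only on the norms of the $A_j$ and on the behavior of the $A_j$-valued boundary maps, both of which are preserved by $\Phi$. Consequently $\Phi$ induces an isometric identification of $\rpa{L^p\pa{U,\omega_0,\C^m}, L^p\pa{U,\omega_1,\C^m}}_\theta$ with $\rpa{L^p(\tilde U, \tilde\omega_0), L^p(\tilde U, \tilde\omega_1)}_\theta$.

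Finally, I would apply the scalar Stein--Weiss theorem (in its arbitrary-measure-space form, as explicitly noted in \S\ref{subsec:goals}) on $(\tilde U, d\tilde\mu)$ with $p_0 = p_1 = p$. In this case the exponent formula \eqref{eq:pwtheta} collapses to $\tilde\omega_\theta = \tilde\omega_0^{1-\theta}\tilde\omega_1^{\theta}$, which under the identification given by $\Phi$ is just the function $(x,i) \mapsto \omega_\theta(x)$ with $\omega_\theta$ as in \eqref{eq:omegatheta}. Pulling the resulting isometric identity back through $\Phi^{-1}$ yields \eqref{eq:calder_thm}. The only nontrivial point in this argument is the functoriality step, but since it is essentially a tautology from Definition \ref{def:complex_method}, no real obstacle arises; the proof is genuinely a short reduction rather than a calculation.
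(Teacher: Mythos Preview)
Your argument is correct; the product-space reduction and the functoriality of the complex method under isometric couple isomorphisms are both unproblematic, and the paper itself affirms in \S\ref{subsec:goals} that the scalar Stein--Weiss formula holds on arbitrary measure spaces, which is exactly what you need on $\tilde U$. The paper, by contrast, does not supply a self-contained proof: the Remark following Theorem \ref{thm:calder} merely points to two external routes, namely a direct adaptation of Stein's original argument to $\C^m$-valued functions, or an appeal to Calder\'on's general lattice formula $[X_0(B_0),X_1(B_1)]_\theta = X_0^{1-\theta}X_1^\theta([B_0,B_1]_\theta)$ with $B_0=B_1=\C^m$. Your reduction is different from both and arguably the most economical of the three, since it neither reworks any interpolation computation nor invokes the lattice-valued machinery --- the finite index set $\{1,\dots,m\}$ is simply absorbed into the base measure space, after which the scalar theorem applies verbatim.
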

\begin{remark}
In fact the formula (\ref{eq:calder_thm}) is a special case of the isometric formula 
\begin{equation}\label{eq:GenCalderThm}
\left[L^{p_{0}}(U,\omega_{0},\mathbb{C}^{m}),L^{p_{1}}\left(U,\omega_{1},\mathbb{C}^{m}\right)\right]_{\theta}=L^{p_\theta}\left(U,\omega_{\theta,p_\theta},\mathbb{C}^{m}\right)
\end{equation}
 which holds for every $p_{0}$ and $p_{1}$ in $[1,\infty)$ and each $\theta\in(0,1)$ when we take $p_\theta$ and $\omega_{\theta,p_\theta}$ to be the number $p$ and the function $\omega$ specified by \eqref{eq:pwtheta}. This can be proved by a straightforward variant of the proof of Theorem 2 in \cite{Stein}. It can also be seen to be a very special case of a result\footnote{It is a special case of the isometric formula  $\left[X_{0}(B_{0}),X_{1}(B_{1})\right]_{\theta}=X_{0}^{1-\theta}X_{1}^{\theta}\left(\left[B_{0},B_{1}\right]_{\theta}\right)$ which, as stated as the second part of the result labelled ``i)''
on p. 125 of \cite{Calder}, holds for all choices of the Banach couple $\left(B_{0},B_{1}\right)$ whenever $X_{0}$ and $X_{1}$ are Banach lattices of measurable functions on the same underlying $\sigma$-finite measure space and $X_{0}^{1-\theta}X_{1}^{\theta}$ has a certain property. That property holds in our case in view of the Lebesgue Dominated Convergence Theorem.}  in \cite{Calder}.
\end{remark}
We shall now use this theorem to easily show the aforementioned inclusion, as well as the associated norm inequality.
\begin{theorem}\label{thm:easy_inclusion}
Let $U$ be an open subset of $\mathbb{R}^{d}$ and suppose that $\omega_{0}$ and $\omega_{1}$ are weight functions which both satisfy the compact boundedness condition on $U$. Then, for each $p\in[1,\infty)$, we obtain that $\left(W^{1,p}\left(U,\omega_{0}\right),W^{1,p}\left(U,\omega_{1}\right)\right)$ is a Banach couple and that, for each $\theta\in(0,1)$, its complex interpolation spaces satisfy 
$$
\rpa{W^{1,p}\pa{U,\omega_0},W^{1,p}\pa{U,\omega_1}}_{\theta} \subset W^{1,p}\pa{U,\omega_\theta},
$$
and 
\begin{equation}\label{eq:easy_inclusion_norm}
\norm{\phi}_{W^{1,p}\pa{U,\omega_\theta}} \leq \norm{\phi}_{\rpa{W^{1,p}\pa{U,\omega_0},W^{1,p}\pa{U,\omega_1}}_{\theta} },
\end{equation}
for every $\phi\in \rpa{W^{1,p}\pa{U,\omega_0},W^{1,p}\pa{U,\omega_1}}_{\theta}$.
\end{theorem}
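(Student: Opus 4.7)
The plan is to reduce the claim to Calderón's formula (Theorem \ref{thm:calder}) via a single isometric embedding. First I would verify the Banach couple property: since $\omega_0$ and $\omega_1$ satisfy the compact boundedness condition, so does $\omega:=\min\{\omega_0,\omega_1\}$ (Remark \ref{rem:CBC-obvious}), and the identity map continuously embeds each $W^{1,p}(U,\omega_j)$ into the Hausdorff Banach space $L^p(U,\omega)$. Completeness of each $W^{1,p}(U,\omega_j)$ is either assumed or follows from the compact boundedness condition via Lemma \ref{lem:banach_space_and_loc}, so $\bigl(W^{1,p}(U,\omega_0),W^{1,p}(U,\omega_1)\bigr)$ is indeed a Banach couple.

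The core step is to introduce the gradient embedding
\[
J:\phi\longmapsto (\phi,\nabla\phi),
\]
defined on $W^{1,p}(U,\omega_0)+W^{1,p}(U,\omega_1)$, which is meaningful because the weak gradient is linear in the distributional sense, so $J$ is a well-defined linear operator into $L^p(U,\omega,\C^{d+1})$. From Definition \ref{def:W^p_wZZ}, $J$ is an \emph{isometry} from $W^{1,p}(U,\omega_j)$ onto its image in $L^p(U,\omega_j,\C^{d+1})$ for $j=0,1$. Hence, in the terminology of Definition \ref{def:RelInterpZZ}, $J$ is a bounded operator of norm $\le 1$ between the couples $\bigl(W^{1,p}(U,\omega_0),W^{1,p}(U,\omega_1)\bigr)$ and $\bigl(L^p(U,\omega_0,\C^{d+1}),L^p(U,\omega_1,\C^{d+1})\bigr)$.

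Next I would apply Theorem \ref{thm:interpolation_thm_basic} to $J$ and combine with Theorem \ref{thm:calder}: this yields, for each $\phi$ in the complex interpolation space,
\[
\bigl\|(\phi,\nabla\phi)\bigr\|_{L^p(U,\omega_\theta,\C^{d+1})}
= \|J\phi\|_{[L^p(U,\omega_0,\C^{d+1}),L^p(U,\omega_1,\C^{d+1})]_\theta}
\le \|\phi\|_{[W^{1,p}(U,\omega_0),W^{1,p}(U,\omega_1)]_\theta}.
\]
In particular the left hand side is finite, so by Definition \ref{def:W^p_wZZ} we conclude $\phi\in W^{1,p}(U,\omega_\theta)$ and \eqref{eq:easy_inclusion_norm} holds.

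I do not anticipate a serious obstacle: the only mildly delicate point is to confirm that the weak gradient is preserved when one passes from an element $\phi\in W^{1,p}(U,\omega_0)+W^{1,p}(U,\omega_1)$ to its decomposition, so that $J$ is unambiguously defined on the sum. This is immediate from the linearity of distributional differentiation together with the fact that functions in either $W^{1,p}(U,\omega_j)$ are locally integrable (Remark \ref{rem:LocalIntegrabilityZZ}), so both $\phi$ and its components are legitimate distributions on $U$. Everything else is a direct transfer of Calderón's $L^p$ identity through an isometric embedding.
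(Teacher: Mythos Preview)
Your proposal is correct and follows essentially the same approach as the paper: both define the operator $\phi\mapsto(\phi,\nabla\phi)$, observe it is a norm-one map from each $W^{1,p}(U,\omega_j)$ into $L^p(U,\omega_j,\C^{d+1})$, and then invoke Theorem \ref{thm:interpolation_thm_basic} together with Theorem \ref{thm:calder} to obtain the inclusion and the norm inequality. Your brief discussion of why the operator is well defined on the sum space is a worthwhile clarification that the paper leaves implicit.
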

\begin{proof}
In view of Lemma \ref{lem:banach_space_and_loc}, $W^{1,p}\left(U,\omega_{0}\right)$ and $W^{1,p}\left(U,\omega_{1}\right)$ are both Banach spaces, and therefore also, as explained at the beginning of this section, $\left(W^{1,p}\left(U,\omega_{0}\right),W^{1,p}\left(U,\omega_{1}\right)\right)$ is a Banach couple.\\
Define a linear operator 
$$T:W^{1,p}\pa{U,\omega_0}+W^{1,p}\pa{U,\omega_1}\rightarrow L^p\pa{U,\omega_0,\C^{d+1}}+ L^p\pa{U,\omega_1\C^{d+1}}$$
by 
$$T\phi:=\pa{\phi,\nabla \phi}.$$
By the definition of $W^{1,p}\pa{U,\omega}$ we have that $T\phi$ is a norm $1$ map from $W^{1,p}\pa{U,\omega_j}$ into $L^p\pa{U,\omega_j,\C^{d+1}}$ for $j=0,1$. Using the standard interpolation theorem, Theorem \ref{thm:interpolation_thm_basic}, and Theorem \ref{thm:calder}, we conclude that 
$$T:\rpa{W^{1,p}\pa{U,\omega_0},W^{1,p}\pa{U,\omega_1}}_{\theta} \rightarrow L^p\pa{U,\omega_\theta,\C^{d+1}}$$
is bounded with norm less than, or equal to, $1$. Thus, if $\phi\in \rpa{W^{1,p}\pa{U,\omega_0},W^{1,p}\pa{U,\omega_1}}_{\theta}$ then $\pa{\phi,\nabla \phi}\in L^p\pa{U,\omega_\theta,\C^{d+1}}$, i.e. $\phi\in W^{1,p}\pa{U,\omega_\theta}$. Moreover,  
$$\norm{\phi}_{W^{1,p}\pa{U,\omega_\theta}}=\norm{\pa{\phi,\nabla \phi}}_{L^{p}\pa{U,\omega_\theta,\C^{d+1}}}=\norm{T\phi}_{L^{p}\pa{U,\omega_\theta,\C^{d+1}}}$$
$$=\norm{T\phi}_{\rpa{L^{p}\pa{U,\omega_0,\C^{d+1}},L^{p}\pa{U,\omega_1,\C^{d+1}}}_{\theta}}
\leq  \norm{\phi}_{\rpa{W^{1,p}\pa{U,\omega_0},W^{1,p}\pa{U.\omega_1}}_{\theta}}.$$
This concludes the proof of this theorem. Thus it also establishes (\ref{eq:easy_inclusion_thm}) and the left side of the norm inequality (\ref{eq:main_norm}) in the statement of Theorem \ref{thm:main}.
\end{proof}
\begin{remark}\label{rem:Different_values_of_pZZ}
It should perhaps also be pointed out that the proof of Theorem \ref{thm:easy_inclusion} does not require the weight functions $\omega_{0}$ and $\omega_{1}$ to satisfy the
compact boundedness condition. In fact it suffices if they have any other property which ensures that $W^{1,p}(U,\omega_{0})$ and $W^{1,p}(U,\omega_{1})$ are both Banach spaces. (For example, some such properties are considered in the latter part of Lemma \ref{lem:banach_space_and_loc}.) Essentially the same proof as for Theorem \ref{thm:easy_inclusion} also shows that, for
any $p_{0}$ and $p_{1}$ in $[1,\infty)$, if $W^{1,p_{0}}(U,\omega_{0})$ and $W^{1,p_{1}}(U,\omega_{1})$ are both Banach spaces then they satisfy the norm one inclusion 
$$\left[W^{1,p_{0}}(U,\omega_{0}),W^{1,p_{1}}\left(U,\omega_{1}\right)\right]_{\theta}\subset W^{1,p}(U,\omega)$$
for each $\theta\in(0,1)$, where $p$ and $\omega$ are as in \eqref{eq:pwtheta}. This is an immediate consequence of the isometry (\ref{eq:GenCalderThm}). We have not yet considered under what conditions we can use some version of our approach to obtain the reverse of this inclusion when $p_{0}\not=p_{1}$. Such a result would complement the particular case considered in Formula (5.3) of Theorem 4 of \cite[p. 208]{Lofstrom} in which $U=\mathbb{R}^{d}$ and $\omega_{0}$, $\omega_{1}$ and $\omega_{\theta}$ are all required to be so-called polynomially regular weight functions. 
\end{remark}

We still have one more task in this section, which is to consider under what conditions we can replace the spaces $W^{1,p}(U,\omega)$ by their subspaces $W_{0}^{1,p}(U,\omega)$ in the inclusion \eqref{eq:easy_inclusion_thm}. We do this in the following theorem which will lead us eventually to Theorem \ref{thm:main_for_W_0}.

\begin{theorem} \label{thm:SimultaneousApprox}
Let $U$ be an open subset of $\mathbb{R}^{d}$ and suppose that $\omega_{0}$ and $\omega_{1}$ are weight functions which both satisfy
the compact boundedness condition on $U$. Suppose further that, for some value of $p\in[1,\infty)$,  whenever $\phi$ is an element of $W_{0}^{1,p}\left(U,\omega_{0}\right)\cap W_{0}^{1,p}\left(U,\omega_{1}\right)$, then there exists a sequence $\left\{ \phi_{n}\right\} _{n\in\mathbb{N}}$ of functions in $Lip_{c}(U)$ which converges to $\phi$ in $W^{1,p}\left(U,\omega_{0}\right)$ or $W^{1,p}\left(U,\omega_{1}\right)$ and is a bounded sequence in the other space. Then, for that value of $p$ and for each $\theta\in(0,1)$, 
$$
\left[W_{0}^{1,p}\left(U,\omega_{0}\right),W_{0}^{1,p}\left(U,\omega_{1}\right)\right]_{\theta}\subset W_{0}^{1,p}\left(U,\omega_{\theta}\right)
$$
and, furthermore,
\begin{equation}\label{eq:easy_inequality_W_0}
\left\Vert \phi\right\Vert _{W_{0}^{1,p}\left(U,\omega_{\theta}\right)}\le\left\Vert \phi\right\Vert _{\left[W_{0}^{1,p}\left(U,\omega_{0}\right),W_{0}^{1,p}\left(U,\omega_{1}\right)\right]_{\theta}}
\end{equation}
for each $\phi\in\left[W_{0}^{1,p}\left(U,\omega_{0}\right),W_{0}^{1,p}\left(U,\omega_{1}\right)\right]_{\theta}$.
\end{theorem}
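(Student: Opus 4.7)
The plan is to bootstrap the already-established inclusion
$$
\left[W^{1,p}(U,\omega_0),W^{1,p}(U,\omega_1)\right]_\theta\subset W^{1,p}(U,\omega_\theta)
$$
from Theorem \ref{thm:easy_inclusion} together with part \eqref{item:inter_properties_inclusion_of_norms_and_subspaces} of Theorem \ref{thm:OtherProperties}. Since $W_0^{1,p}(U,\omega_j)$ is a closed subspace of $W^{1,p}(U,\omega_j)$ equipped with the same norm, these two facts immediately give the continuous inclusion
$$
\left[W_0^{1,p}(U,\omega_0),W_0^{1,p}(U,\omega_1)\right]_\theta\subset W^{1,p}(U,\omega_\theta)
$$
along with the desired norm inequality \eqref{eq:easy_inequality_W_0} (with $W^{1,p}(U,\omega_\theta)$ in place of $W_0^{1,p}(U,\omega_\theta)$). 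Hence the only remaining task is to show that every element of the left-hand interpolation space actually lies in the closure of $Lip_c(U)$ in the $W^{1,p}(U,\omega_\theta)$-norm.

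The first reduction is a density argument. By part \eqref{item:inter_properties_dense} of Theorem \ref{thm:OtherProperties}, the intersection $W_0^{1,p}(U,\omega_0)\cap W_0^{1,p}(U,\omega_1)$ is dense in $\left[W_0^{1,p}(U,\omega_0),W_0^{1,p}(U,\omega_1)\right]_\theta$; combined with the inclusion just derived, this means every $\phi$ in the interpolation space is a limit, in $W^{1,p}(U,\omega_\theta)$, of a sequence drawn from the intersection. Since $W_0^{1,p}(U,\omega_\theta)$ is closed in $W^{1,p}(U,\omega_\theta)$, it therefore suffices to prove that $W_0^{1,p}(U,\omega_0)\cap W_0^{1,p}(U,\omega_1)\subset W_0^{1,p}(U,\omega_\theta)$.

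The workhorse for this last inclusion is the elementary log-convexity inequality
$$
\|\psi\|_{W^{1,p}(U,\omega_\theta)}\le\|\psi\|_{W^{1,p}(U,\omega_0)}^{1-\theta}\|\psi\|_{W^{1,p}(U,\omega_1)}^{\theta},
$$
valid whenever $\psi\in W^{1,p}(U,\omega_0)\cap W^{1,p}(U,\omega_1)$. I would prove this by applying H\"older's inequality with exponents $1/(1-\theta)$ and $1/\theta$ to the pointwise factorisation $\omega_\theta=\omega_0^{1-\theta}\omega_1^{\theta}$ inside the $L^p$ integral defining the $W^{1,p}(U,\omega_\theta)$-norm of the vector $(\psi,\nabla\psi)$. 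Given $\phi\in W_0^{1,p}(U,\omega_0)\cap W_0^{1,p}(U,\omega_1)$, the standing hypothesis produces a sequence $\{\phi_n\}\subset Lip_c(U)$ that converges to $\phi$ in one of the spaces, say $W^{1,p}(U,\omega_0)$, and is bounded in the other. Applying the log-convexity inequality to $\phi_n-\phi$ (whose $W^{1,p}(U,\omega_1)$-norms are bounded by the boundedness of $\{\phi_n\}$ together with the fact that $\phi\in W^{1,p}(U,\omega_1)$, and whose $W^{1,p}(U,\omega_0)$-norms tend to zero) forces $\phi_n\to\phi$ in $W^{1,p}(U,\omega_\theta)$. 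Since $\phi_n\in Lip_c(U)$, this places $\phi$ in $W_0^{1,p}(U,\omega_\theta)$ and completes the argument.

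There is no serious obstacle in this approach; the only point demanding any care is to ensure that the log-convexity inequality is applied to the difference $\phi_n-\phi$ rather than to $\phi_n$ alone, so as to exploit the convergence in one norm together with only boundedness, not convergence, in the other. The symmetric case (convergence in $W^{1,p}(U,\omega_1)$ and boundedness in $W^{1,p}(U,\omega_0)$) is handled identically by interchanging the roles of $\omega_0$ and $\omega_1$.
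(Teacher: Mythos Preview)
Your proposal is correct and follows essentially the same strategy as the paper: reduce to the intersection $W_0^{1,p}(U,\omega_0)\cap W_0^{1,p}(U,\omega_1)$ via density (part \eqref{item:inter_properties_dense} of Theorem \ref{thm:OtherProperties}) and the continuous inclusion coming from Theorem \ref{thm:easy_inclusion} together with part \eqref{item:inter_properties_inclusion_of_norms_and_subspaces}, then use the standing hypothesis plus a log-convexity bound on $\phi_n-\phi$ to force convergence in $W^{1,p}(U,\omega_\theta)$.

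The one genuine difference is in how the log-convexity bound is obtained. The paper invokes the abstract interpolation inequality \eqref{eq:A-Theta}, namely $\|\phi_n-\phi\|_{[W_0^{1,p}(\omega_0),W_0^{1,p}(\omega_1)]_\theta}\le\|\phi_n-\phi\|_{W^{1,p}(\omega_0)}^{1-\theta}\|\phi_n-\phi\|_{W^{1,p}(\omega_1)}^\theta$, and then passes to $W^{1,p}(\omega_\theta)$ via Theorem \ref{thm:easy_inclusion}. You instead prove the pointwise H\"older bound $\|\psi\|_{W^{1,p}(\omega_\theta)}\le\|\psi\|_{W^{1,p}(\omega_0)}^{1-\theta}\|\psi\|_{W^{1,p}(\omega_1)}^\theta$ directly, bypassing both \eqref{eq:A-Theta} and the second appeal to Theorem \ref{thm:easy_inclusion}. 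Your route is marginally more elementary; the paper's route has the incidental by-product of showing that $\phi_n\to\phi$ in the interpolation norm itself, not just in $W^{1,p}(\omega_\theta)$, though this extra information is not needed for the statement.
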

\begin{remark} \label{rem:SimpleConditionZZ}We
suspect that, given a pair of weight functions $\omega_{0}$ and $\omega_{1}$ which are known to both satisfy the compact boundedness condition, quite mild additional conditions might suffice to ensure that they also satisfy the second hypothesis required in Theorem \ref{thm:SimultaneousApprox}. Perhaps it is even not necessary to impose any additional conditions for this to happen. Pending further investigation of this, we simply point out that this second hypothesis is obviously satisfied whenever one of the weight functions $\omega_{0}$ and $\omega_{1}$ is dominated by some constant multiple of the other. In that case there even exists a sequence $\left\{ \phi_{n}\right\} _{n\in\mathbb{N}}$ in $Lip_c\pa{U}$ which converges to $\phi$ in \textit{both} $W^{1,p}\left(U,\omega_{0}\right)$ and $W^{1,p}\left(U,\omega_{1}\right)$.
\end{remark} 
\begin{proof}[Proof of Theorem \ref{thm:SimultaneousApprox}]
As pointed out in Remark \ref{rem:CBC-obvious}, the fact that $\omega_{0}$ and $\omega_{1}$ both satisfy the compact boundedness condition immediately implies that $\omega_{\theta}$ also satisfies this condition for each $\theta\in(0,1)$. Consequently, by Lemma \ref{lem:banach_space_and_loc}, $W^{1,p}\left(U,\omega\right)$ is a Banach space which contains $Lip_{c}\left(U\right)$ whenever $\omega$ is any one of the weight functions $\omega_{0}$, $\omega_{1}$ or $\omega_{\theta}$. This enables us to define $W_{0}^{1,p}(U,\omega)$ in accordance with Definition \ref{def:W_0} for each of these choices of $\omega$.
Then the relevant definitions immediately imply (cf. part \eqref{item:inter_properties_inclusion_of_norms_and_subspaces} of Theorem \ref{thm:OtherProperties}) that $\left(W_{0}^{1,p}(U,\omega_{0}),W_{0}^{1,p}(U,\omega_{1})\right)$ is a Banach couple, and that 
$$
\left[W_{0}^{1,p}(U,\omega_{0}),W_{0}^{1,p}(U,\omega_{1})\right]_{\theta}\subset\left[W^{1,p}(U,\omega_{0}),W^{1,p}(U,\omega_{1})\right]_{\theta}
$$ 
 and that 
\begin{equation}\label{eq:W_0-InequalityB}
\left\Vert \phi\right\Vert _{\left[W^{1,p}(U,\omega_{0}),W^{1,p}(U,\omega_{1})\right]_{\theta}}\le\left\Vert \phi\right\Vert _{\left[W_{0}^{1,p}(U,\omega_{0}),W_{0}^{1,p}(U,\omega_{1})\right]_{\theta}}.
\end{equation}
for each $\phi\in\left[W_{0}^{1,p}(U,\omega_{0}),W_{0}^{1,p}(U,\omega_{1})\right]_{\theta}$.\\
By definition, $Lip_{c}(U)$ is contained in $W_{0}^{1,p}(U,\omega_{0})$ and in $W_{0}^{1,p}(U,\omega_{1})$ and therefore also in $W_{0}^{1,p}(U,\omega_{0})\cap W_{0}^{1,p}(U,\omega_{1})$. This implies (cf. part \eqref{item:inter_properties_dense} of Theorem \ref{thm:OtherProperties}) that 
$$ Lip_{c}(U)\subset\left[W_{0}^{1,p}(U,\omega_{0}),W_{0}^{1,p}(U,\omega_{1})\right]_{\theta}.$$
In view of this inclusion and since we also know that $Lip_{c}(U)$ is also contained in $W_{0}^{1,p}(U,\omega_{\theta})$, we are now ready to proceed to our next step of the proof, which will be to show that the inequality \eqref{eq:easy_inequality_W_0} holds for every $\phi\in Lip_{c}(U)$. By definition $\left\Vert \phi\right\Vert _{W_{0}^{1,p}(U,\omega)}=\left\Vert \phi\right\Vert _{W^{1,p}(U,\omega)}$ for such functions $\phi$ and this, combined with the inequality \eqref{eq:easy_inclusion_norm} of Theorem \ref{thm:easy_inclusion} and \eqref{eq:W_0-InequalityB}, indeed accomplishes this step.\\
In view of the standard fact recalled in part \eqref{item:inter_properties_dense} of Theorem \ref{thm:OtherProperties}, we can assert that $W^{1,p}_0\pa{U,\omega_{0}}\cap W^{1,p}_0\pa{U,\omega_{1}}$ is dense in $\left[W_{0}^{1,p}\left(U,\omega_{0}\right),W_{0}^{1,p}\left(U,\omega_{1}\right)\right]_{\theta}$. Therefore, since $W^{1,p}_0\pa{U,\omega_\theta}$ is a Banach space, the proof of this theorem will be complete if we show that each element $\phi$ in $W_{0}^{1,p}\left(U,\omega_{0}\right)\cap W_{0}^{1,p}\left(U,\omega_{1}\right)$ is also in $W^{1,p}_0\pa{U,\omega_\theta}$ and satisfies the inequality \eqref{eq:easy_inequality_W_0}.\\
Let $\phi$ be an arbitrary element of $W_{0}^{1,p}\left(U,\omega_{0}\right)\cap W_{0}^{1,p}\left(U,\omega_{1}\right)$. We keep in mind that Theorem \ref{thm:easy_inclusion} combined with parts \eqref{item:inter_properties_dense} and \eqref{item:inter_properties_inclusion_of_norms_and_subspaces} of Theorem \ref{thm:OtherProperties} guarantee
that $\phi$ must also be in $W^{1,p}(U,\omega_{\theta})$. Now let $\left\{ \phi_{n}\right\} _{n\in\mathbb{N}}$ be a sequence in $Lip_{c}(U)$ with the properties guaranteed by the hypotheses of the present theorem, so that at least one of the two numerical sequences $\left\{ \left\Vert \phi_{n}-\phi\right\Vert _{W^{1,p}\left(U,\omega_{0}\right)}\right\} _{n\in\mathbb{N}}$ and $\left\{ \left\Vert \phi_{n}-\phi\right\Vert _{W^{1,p}\left(U,\omega_{1}\right)}\right\} _{n\in\mathbb{N}}$
converges to $0$ and both are bounded. We now use the result about the complex interpolation method which is recalled in part \eqref{item:inter_properties_norm_control} and inequality \eqref{eq:A-Theta} of Theorem \ref{thm:OtherProperties}. In our current context the inequality \eqref{eq:A-Theta} becomes
\begin{align*}
\left\Vert \phi_{n}-\phi\right\Vert _{\left[W_{0}^{1,p}\left(U,\omega_{0}\right),W_{0}^{1,p}\left(U,\omega_{1}\right)\right]_{\theta}} & \le\left\Vert \phi_{n}-\phi\right\Vert _{W_{0}^{1,p}\left(U,\omega_{0}\right)}^{1-\theta}\cdot\left\Vert \phi_{n}-\phi\right\Vert _{W_{0}^{1,p}\left(U,\omega_{1}\right)}^{\theta}\\
 & =\left\Vert \phi_{n}-\phi\right\Vert _{W^{1,p}\left(U,\omega_{0}\right)}^{1-\theta}\cdot\left\Vert \phi_{n}-\phi\right\Vert _{W^{1,p}\left(U,\omega_{1}\right)}^{\theta}
\end{align*}
which gives us that
\begin{equation}\label{eq:phi_n_converges}
\lim_{n\to0}\left\Vert \phi-\phi_{n}\right\Vert _{\left[W_{0}^{1,p}\left(U,\omega_{0}\right),W_{0}^{1,p}\left(U,\omega_{1}\right)\right]_{\theta}}=0,
\end{equation}
and therefore also (cf. part \eqref{item:inter_properties_inclusion_of_norms_and_subspaces} of Theorem \ref{thm:OtherProperties}) that $\lim_{n\to0}\left\Vert \phi-\phi_{n}\right\Vert _{\left[W^{1,p}\left(U,\omega_{0}\right),W^{1,p}\left(U,\omega_{1}\right)\right]_{\theta}}=0$. This in turn implies, by Theorem \ref{thm:easy_inclusion} that
\begin{equation}\label{eq:WillBeInW_0}
\lim_{n\to0}\left\Vert \phi-\phi_{n}\right\Vert _{W^{1,p}\left(U,\omega_{\theta}\right)}=0
\end{equation}
and therefore $\phi$ is not only an element of $W^{1,p}(U,\omega_{\theta})$ but also of $W_{0}^{1,p}(U,\omega_{\theta})$. So we can deduce from \eqref{eq:WillBeInW_0} and Definition \ref{def:W_0} that 
$$\left\Vert \phi\right\Vert _{W_{0}^{1,p}\left(U,\omega_{\theta}\right)}=\left\Vert \phi\right\Vert _{W^{1,p}\left(U,\omega_{\theta}\right)}=\lim_{n\to0}\left\Vert \phi_{n}\right\Vert _{W^{1,p}\left(U,\omega_{\theta}\right)}=\lim_{n\to0}\left\Vert \phi_{n}\right\Vert _{W_{0}^{1,p}\left(U,\omega_{\theta}\right)}.$$
Since an earlier step of this proof gives us that each $\phi_{n}$ satisfies \eqref{eq:easy_inequality_W_0}, the above limit must be less than or equal to the limit $\lim_{n\to\infty}\left\Vert \phi_{n}\right\Vert _{\left[W_{0}^{1,p}\left(U,\omega_{0}\right),W_{0}^{1,p}\left(U,\omega_{1}\right)\right]_{\theta}}$
which exists and equals $\left\Vert \phi\right\Vert _{\left[W_{0}^{1,p}\left(U,\omega_{0}\right),W_{0}^{1,p}\left(U,\omega_{1}\right)\right]_{\theta}}$in
view of \eqref{eq:phi_n_converges}. Thus we have shown that our arbitrary element $\phi$ of $W_{0}^{1,p}\left(U,\omega_{0}\right)\cap W_{0}^{1,p}\left(U,\omega_{1}\right)$, as well as being in $W_{0}^{1,p}(U,\omega_{\theta})$, also satisfies \eqref{eq:easy_inequality_W_0}. As already explained above, this suffices to complete the proof of our theorem.
\end{proof}
In the next section we will make the significant step towards showing the other inclusion and inequality of our main theorem. This, however, will not be as simple. 

\section{Towards the difficult inclusion}\label{sec:difficult_inclusion}
The goal of this section is to consider the relationship between the two spaces $\W^p_0\pa{U,\theta,\rw}$ and $\rpa{W^{1,p}_0\pa{U,\omega_0},W^{1,p}_0\pa{U,\omega_1}}_{\theta}$. We do not find it surprising that in general we cannot obtain an analogous relationship between the ``full'' spaces $\W^p\pa{U,\theta,\rw}$ and $\rpa{W^{1,p}\pa{U,\omega_0},W^{1,p}\pa{U,\omega_1}}_{\theta}$, and can only deal with their above mentioned smaller but ``significant'' respective subspaces. However, as we have mentioned in \S\ref{sec:into}, in many cases these smaller spaces are either everything, or ``big'' enough to warrant a theorem on their own.\\
We begin with a definition which specifies some conditions which may or may not be satisfied by a given pair of weight functions $\omega_0$ and $\omega_1$. As we shall see, for most of the results of this section, we will require our weight functions to satisfy these conditions. 
\begin{definition}\label{def:admissibility}
We say that a pair of weight functions $\pa{\omega_0,\omega_1}$ is a $(\theta,p)-$\textit{admissible pair of weight functions} on $U$ (or, in short, a \textit{$(\theta,p)-$admissible pair}) for some $\theta\in(0,1)$ and $p\in[1,\infty]$ if the following holds:
\begin{enumerate}[(i)]
\item\label{item:W_is_banach_with_lip} $W^{1,p}\pa{U,\omega_0}$, $W^{1,p}\pa{U,\omega_1}$ and $W^{1,p}\pa{U,\omega_\theta}$ are Banach spaces that contain $Lip_c\pa{U}$.
\item\label{item:bounded_gradient} The function $\log\pa{\rw(x)}=\log\pa{\frac{\omega_0(x)}{\omega_1(x)}}$ is locally Lipschitz on $U$.
\end{enumerate} 
\end{definition} 
\begin{remark} \label{rem:AboutAdmissiblePairsZZ}
In view of the inequality $\omega_{0}^{1-\theta}\omega_{1}^{\theta}\le\max\left\{ \omega_{0},\omega_{1}\right\} $ it is easy to see that if $W^{1,p}(U,\omega_{0})$ and $W^{1,p}(U,\omega_{1})$ both contain $Lip_{c}(U)$, then in fact it follows automatically that $W^{1,p}(U,\omega_{\theta})$ also contains $Lip_{c}(U)$ for every $\theta\in(0,1)$. 
\end{remark}

From this point onward, till the end of this section, we will assume that we are dealing with a given fixed open set $U$ and often we will not mention $U$ in the notation that we use.
\begin{remark}\label{rem:W_(p,rw)_contains_lipZZ}
It is simple to observe that if $\pa{\omega_0,\omega_1}$ forms a $(\theta,p)-$admissible pair for some $\theta\in(0,1)$ and $p\in[1,\infty)$, then $\W^p\pa{\theta,\rw}$ also contains $Lip_c$. (Relevant definitions, i.e. of $\W^p\pa{\theta,\rw}$ and of $\rw$, appear in the statement of Theorem \ref{thm:main}.) Indeed, given any $\phi\in Lip_{c}$, we first note that it is contained in $W^{1,p}(\omega_{\theta})$, in view of condition \eqref{item:W_is_banach_with_lip} of Definition \ref{def:admissibility}. Then the other requirement for the membership of $\phi$ in $\mathcal{W}^p\pa{\theta,r_{\omega}}$ follows immediately from condition \eqref{item:bounded_gradient} of the same definition, which ensures that the auxiliary function $\phi(x)\left\vert \nabla\log\left(r_{\omega}(x)\right)\right\vert $ is continuous. Since that function also has compact support, it is obviously an element of $L^{p}(\omega_{\theta})$ as required.
\end{remark}
\begin{remark}
In most of our main results in this paper we require $\omega_{0}$ and $\omega_{1}$ to both satisfy the compact boundedness condition. In view of Remark \ref{rem:CBC-obvious}, Lemma \ref{lem:banach_space_and_loc} and Lemma \ref{lem:lip_in_W_p}, this implies that \eqref{item:W_1p_is_banach} holds for all $\theta\in(0,1)$ and $p\in[1,\infty)$.
From this we see that Definition \ref{def:admissibility}, and the results in this section which use it, are somewhat more general than they need to be for obtaining those main results. But this greater generality may be useful in future investigations where more general weight functions are considered. 
\end{remark}
Our main goal in this section is to prove the following result:
\begin{theorem}\label{thm:difficult_inclusion_on_W_0}
Let $\pa{\omega_0,\omega_1}$ form a $(\theta,p)-$admissible pair for some $\theta\in(0,1)$ and $p\in[1,\infty)$. Then, we have that 
\begin{equation}\label{eq:difficult_inclusion_subset}
\W^p_0\pa{\theta,\rw} \subset \rpa{W^{1,p}_0\pa{\omega_0},W^{1,p}_0\pa{\omega_1}}_{\theta}
\end{equation}
Moreover, there exists a positive constant $C_p$ which depends only on $p$, such that 
\begin{equation}\label{eq:difficult_inclusion_norm}
\norm{\phi}_{\rpa{W^{1,p}\pa{\omega_0},W^{1,p}\pa{\omega_1}}_{\theta} } \leq C_p \norm{\phi}_{\W^p\pa{\theta,\rw}} \quad\quad \text{for every }\;\phi\in \W^p_0\pa{\theta,\rw}.
\end{equation}
\end{theorem}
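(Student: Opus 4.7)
The plan is to use the definition of the complex interpolation method directly: for each $\phi\in\W^p_{0}\pa{U,\theta,\rw}$, I shall construct an explicit $F\in\F\pa{W^{1,p}_{0}\pa{U,\omega_0},W^{1,p}_{0}\pa{U,\omega_1}}$ satisfying $F(\theta)=\phi$ together with a quantitative bound of the form $\norm{F}_{\F}\leq C_p\norm{\phi}_{\W^p\pa{U,\theta,\rw}}$. Motivated by the pointwise identity $\omega_\theta/\omega_z=\rw^{z-\theta}$ (where $\omega_z:=\omega_0^{1-z}\omega_1^z$) that underlies the Stein--Weiss Theorem \ref{thm:calder}, the natural candidate is
\begin{equation*}
F(z)(x):=e^{\beta(z-\theta)^2}\phi(x)\rw(x)^{(z-\theta)/p},
\end{equation*}
where $\beta>0$ is a parameter to be optimised at the end; the Gaussian factor is there to defeat a linear-in-$t$ growth which appears once we differentiate $F(z)$ in $x$.

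I would first treat the case $\phi\in Lip_{c}\pa{U}$. Since $\log\rw$ is locally Lipschitz on $U$ by admissibility, the factor $\rw^{(z-\theta)/p}$ is locally Lipschitz in $x$, so that $F(z)$ remains in $Lip_{c}\pa{U}$ with support contained in that of $\phi$, and therefore belongs to $W^{1,p}_{0}\pa{U,\omega_j}$ for $j=0,1$. A direct computation then yields the pointwise identities
\begin{equation*}
\abs{F(j+it)(x)}^p\omega_j(x)=e^{p\beta\pa{(j-\theta)^2-t^2}}\abs{\phi(x)}^p\omega_\theta(x)
\end{equation*}
and
\begin{equation*}
\nabla F(z)=e^{\beta(z-\theta)^2}\rw^{(z-\theta)/p}\rpa{\nabla\phi+\frac{z-\theta}{p}\phi\,\nabla\log\rw}.
\end{equation*}
Using $\abs{z-\theta}\leq\abs{j-\theta}+\abs{t}$ together with the elementary maximum $\sup_{t\in\R}\abs{t}e^{-\beta t^2}=(2\beta e)^{-1/2}$, this produces a uniform-in-$t$ estimate
\begin{equation*}
\norm{F(j+it)}_{W^{1,p}\pa{U,\omega_j}}\leq e^{\beta}\max\br{1,\frac{e^{\beta}}{p\sqrt{2\beta e}}}\norm{\phi}_{\W^p\pa{U,\theta,\rw}}
\end{equation*}
on both boundary lines, so that after optimising over $\beta$ one recovers the constant $C_p$ of \eqref{eq:ValueOfCp}.

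Verifying that $F\in\F\pa{W^{1,p}_{0}\pa{U,\omega_0},W^{1,p}_{0}\pa{U,\omega_1}}$ is the next step: continuity and boundedness on the closed strip with values in $W^{1,p}_{0}\pa{U,\omega_0}+W^{1,p}_{0}\pa{U,\omega_1}$, and analyticity on its interior. These follow from pointwise analyticity of $z\mapsto\rw(x)^{(z-\theta)/p}$, the compact boundedness condition (which confines $\rw$ between two positive constants on $\mathrm{supp}\,\phi$), and dominated convergence applied to the explicit bounds derived above. To conclude for general $\phi\in\W^p_{0}\pa{U,\theta,\rw}$, I approximate it by $\br{\phi_n}_{n\in\N}\subset Lip_{c}\pa{U}$ in the $\W^p\pa{U,\theta,\rw}$ norm, which is possible by the very definition of $\W^p_{0}\pa{U,\theta,\rw}$. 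Since the construction $\phi\mapsto F$ is linear, the associated sequence $\br{F_n}_{n\in\N}$ is Cauchy in $\F$, its limit $F$ still satisfies $F(\theta)=\phi$ (by passing to the limit in $L^p\pa{U,\omega_\theta}$, into which both $\F$ and $\W^p$ continuously embed), and the desired norm estimate is preserved.

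The main obstacle I anticipate lies in the verification that $F$ is an analytic vector-valued function on the interior of the strip, together with the correct behaviour of its weak gradient: this requires combining the pointwise analyticity of $\rw^{(z-\theta)/p}$ with a justified differentiation under the integral for the weak gradient, and it is precisely the Gaussian factor $e^{\beta(z-\theta)^2}$ that provides the uniform-in-$t$ control making both the membership in $\F$ and the subsequent density argument work. Extra care must be taken to ensure that the values on the boundary lines lie in $W^{1,p}_{0}\pa{U,\omega_j}$ and not merely in $W^{1,p}\pa{U,\omega_j}$; for $\phi\in Lip_{c}$ this is automatic from the support argument above, and for general $\phi$ it follows from the closedness of $W^{1,p}_{0}\pa{U,\omega_j}$ in $W^{1,p}\pa{U,\omega_j}$.
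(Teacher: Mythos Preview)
Your proposal is correct and follows essentially the same route as the paper: the candidate function $F(z)=e^{\beta(z-\theta)^2}\phi\,r_\omega^{(z-\theta)/p}$ is exactly the paper's $f^\phi_{\beta,\theta}$, and the two-step strategy (first $\phi\in Lip_c$, then density) matches Proposition~\ref{prop:f_phi_in_interpolation} followed by the closing argument of the section. The only technical differences are that the paper verifies $F\in\F$ via a power-series/Cauchy-sequence argument (Lemma~\ref{lem:analyticity_continuity_and_boundedness}) rather than dominated convergence, and runs the density step directly in $[W^{1,p}_0(\omega_0),W^{1,p}_0(\omega_1)]_\theta$ rather than in $\F$; your boundary estimate also drops a harmless factor of $2$ relative to the paper's \eqref{eq:norm_of_f_phi_in_interpolation}.
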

The proof of Theorem \ref{thm:difficult_inclusion_on_W_0} has two main parts and will occupy the rest of this section. In the first part, for each given $\phi\in Lip_c$, we will construct an element $f^{\phi}$ of $\mathcal{F}\left(W_{0}^{1,p}\left(\omega_{0}\right),W_{0}^{1,p}\left(\omega_{1}\right)\right)$ which satisfies $f^{\phi}(\theta)=\phi$ and also the norm estimates necessary for showing that $\phi$ satisfies \eqref{eq:difficult_inclusion_norm}. Then, in the second part, we will extend this construction to elements $\phi$ which are in the closure of $Lip_c$ in $\W^p\pa{\theta,\rw}$.
\begin{definition}\label{def:f_phiZZ}
Given $\theta\in(0,1)$, an arbitrary function $\phi\in Lip_c$ and a number $\beta>0$, we define the function
\begin{equation}\label{eq:def_of_f_phi}
f^{\phi}_{\beta,\theta}(x,z)=e^{\frac{\pa{z-\theta}}{p}\log \pa{\rw(x)}+\beta(z-\theta)^2}\phi(x),\quad\quad x\in U,\;z\in \St.
\end{equation}
(Recall, cf. \eqref{eq:rw} that $r_{\omega}$ denotes the ratio $\omega_{0}/\omega_{1}$.) Thus $f_{\beta,\theta}^{\phi}$ denotes the map from $\mathbb{S}$ into the space of measurable functions on $U$, which is such that the image $f_{\beta,\theta}^{\phi}(z)$ of each $z\in\mathbb{S}$ is the function of $x$ defined by \eqref{eq:def_of_f_phi}.
\end{definition}
The reader might be surprised by the inclusion of another parameter $\beta>0$ in the formula \eqref{eq:def_of_f_phi}. This parameter appears in the exponential factor $e^{\beta(z-\theta)^{2}}$ in the formula for $f_{\beta,\theta}^{\phi}$, and we need that factor in order to control the size of the $f_{\beta,\theta}^{\phi}(x,z)$ as $z$ tends to $\infty$ on the strip $\mathbb{S}$. (The same function $e^{\beta(z-\theta)^{2}}$ has a quite similar role elsewhere, as mentioned in Remark \ref{rem:difference_in_def}.) As will be evident in the course of the proofs of the results presented in this section, $\beta$ could be chosen arbitrarily, but we can also note that there is one choice of $\beta$ which will minimize the value that we can obtain for the above mentioned constant.\\
For convenience, we will often drop the subscripts of $\beta$ and $\theta$ and accordingly write $f^\phi$ to denote the function $f_{\beta,\theta}^\phi$.\\
We begin our preparations for achieving the first of the above mentioned main parts of our proof by recalling a rather standard definition and mentioning an important theorem which we will use in several places. 
\begin{definition}\label{def:LocalSobolev}
For each $p\in[1,\infty]$ and each open subset $U$ of $\mathbb{R}^{d}$, the space $W_{loc}^{1,p}(U)$ consists of all measurable functions $f:U\to\mathbb{C}$ which have a weak gradient $\nabla f$ in $U$ and for which, for every compact subset $K$ of $U$, the functions $\chi_{K}f$ and $\chi_{K}\nabla f$ are elements, respectively, of the (unweighted) spaces $L^{p}(U)$ and of $L^{p}(U,\mathbb{C}^{d})$.
\end{definition}
We remark that it is quite straightforward, but perhaps a little tedious, to check that a function $f:U\to\mathbb{C}$ is an element of $W_{loc}^{1,p}(U)$ if and only if, for every open subset $V$ of $U$ whose closure is a compact subset of $U$, the restriction of $f$ to $V$ is an element of $W^{1,p}(V)$. We refer the interested reader to the appendix for the proof of this fact.
\begin{theorem}[Rademacher's Theorem]\label{thm:rademacher}
Let $U$ be an open set. Then, 
\begin{enumerate}
\item\label{item:lip_iff_W_1infty} a function $\phi:U\to \C$ is locally Lipschitz if and only if $\phi \in W^{1,\infty}_{loc}\pa{U}$. 
\item\label{item:diff_ae_and_weak_derivative} Moreover, if $\phi$ is locally Lipschitz on $U$, then it is differentiable almost everywhere\footnote{When we say ``almost everywhere'' we always mean with respect to $d-$dimensional Lebesgue measure.} and its gradient equals its weak gradient almost everywhere\footnote{This equality is to be understood in the following way: each component of the pointwise gradient belongs to the equivalence class of the corresponding component of the weak gradient.}.
\end{enumerate}
\end{theorem}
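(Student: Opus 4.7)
The plan is to handle the two claims separately, building (\ref{item:diff_ae_and_weak_derivative}) on top of (\ref{item:lip_iff_W_1infty}) and a one-dimensional absolute continuity argument. First I would prove the equivalence in (\ref{item:lip_iff_W_1infty}) by a standard difference-quotient / mollification scheme, applied on an arbitrary open $V$ whose closure is a compact subset of $U$ (using Remark \ref{rem:E2VZZ} to localise). Suppose $\phi$ is locally Lipschitz with constant $L$ on $\overline{V}$. Then for each coordinate direction $e_i$ and every sufficiently small $h$, the difference quotients $D^h_i \phi(x) := (\phi(x+he_i)-\phi(x))/h$ lie in $L^\infty(V)$ with norm at most $L$. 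Banach--Alaoglu supplies a weak-$\ast$ subsequential limit $g_i \in L^\infty(V)$, and pairing against $\psi \in C_c^\infty(V)$ combined with a change of variables identifies $g_i$ as the distributional partial derivative of $\phi$ on $V$; hence $\phi \in W^{1,\infty}_{loc}(U)$. Conversely, if $\phi \in W^{1,\infty}_{loc}(U)$, consider the mollifications $\phi_\epsilon = \phi*\rho_\epsilon$ on $V$. Since $\nabla \phi_\epsilon = (\nabla\phi)*\rho_\epsilon$, these are uniformly Lipschitz with constant $\|\nabla\phi\|_{L^\infty(V')}$ for a slight enlargement $V'$ of $V$. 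A subsequence converges almost everywhere to $\phi$, and the pointwise a.e.\ limit of uniformly Lipschitz functions coincides, off a null set, with a Lipschitz function on $V$.

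For claim (\ref{item:diff_ae_and_weak_derivative}), the existence of a pointwise gradient almost everywhere is the classical Rademacher theorem. The approach I would take reduces it to the one-dimensional fact that a Lipschitz function of one real variable is absolutely continuous and hence differentiable almost everywhere: for each fixed direction $v$ in the unit sphere, Fubini applied along lines parallel to $v$ gives the directional derivative $D_v\phi(x)$ for almost every $x\in U$. Separability of the unit sphere lets one intersect countably many such full-measure sets so that all partial derivatives exist at almost every $x$, and an additional Fubini / linearity argument upgrades the mere existence of all partial derivatives to genuine (Fr\'echet) differentiability at almost every point. I would regard this last upgrade as the main technical obstacle, and in practice I would simply cite the standard account (e.g.\ Evans--Gariepy).

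It remains to identify the pointwise gradient with the weak gradient almost everywhere. By part (\ref{item:lip_iff_W_1infty}), $\phi$ has a weak gradient $\nabla_w \phi \in L^\infty_{loc}(U)$, and for the mollifications above $\nabla \phi_\epsilon = (\nabla_w\phi)*\rho_\epsilon$, which converges to $\nabla_w\phi$ in $L^1_{loc}$ as $\epsilon\to 0$. On the other hand, at each point $x_0$ of pointwise differentiability of $\phi$, a direct estimate using the local Lipschitz bound yields $\nabla\phi_\epsilon(x_0)\to\nabla_p\phi(x_0)$, where $\nabla_p\phi(x_0)$ denotes the classical gradient. Since both limits must agree on a set of full measure, we conclude that $\nabla_w\phi = \nabla_p\phi$ almost everywhere, which completes the sketch.
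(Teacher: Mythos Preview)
Your sketch is correct and follows the standard textbook route (difference quotients plus weak-$\ast$ compactness for one implication in \eqref{item:lip_iff_W_1infty}, mollification for the other, Fubini along lines plus a linearity/density argument for \eqref{item:diff_ae_and_weak_derivative}, and mollification again to match pointwise and weak gradients). The paper, however, does not supply its own argument for this theorem at all: it simply records it as a known result and refers to the relevant pages of Evans' textbook, which carry out essentially the same scheme you outline. So there is no real divergence of method here; you have written out in more detail what the paper is content to cite, and your added step showing $\nabla\phi_\epsilon(x_0)\to\nabla_p\phi(x_0)$ at differentiability points is a nice explicit touch that the cited references also handle (via Lebesgue points of the weak gradient rather than your direct Taylor-expansion estimate, but the conclusion is the same).
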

The proof of this theorem can be obtained from material in Chapter 5 of \cite{Evans} (subchapter 5.8.2 b., pages 279-281). Part \eqref{item:lip_iff_W_1infty}, as pointed out in the remark on p. 280 of \cite{Evans}, can be obtained by an easy adaptation of the arguments of the proof of Theorem 4 on pp. 279--280. Part \eqref{item:diff_ae_and_weak_derivative} is the case $p=\infty$ of Theorem 5 (\cite[pp. 280--281]{Evans}) combined with Theorem 6 (\cite[p. 281]{Evans}). The alternative characterisation of $W^{1,p}_{loc}\pa{U}$ mentioned just after Definition \ref{def:LocalSobolev} is relevant here.
\begin{remark}\label{rem:UtterlyObvious}
When we use Theorem \ref{thm:rademacher}, it is sometimes helpful to also keep in mind the very obvious fact that, at every point $x\in U$ where the pointwise gradient $\nabla\phi(x)$ of a locally Lipschitz function $\phi$ exists, it satisfies $\left|\nabla\phi(x)\right|\le L$ where $L$ is the Lipschitz constant of the restriction of $\phi$ to a bounded open set containing $x$.
\end{remark}
\begin{lemma}\label{lem:range}
Let $\pa{\omega_0,\omega_1}$ form a $(\theta,p)-$admissible pair for some $\theta\in(0,1)$ and $p\in[1,\infty)$, and let $\phi\in Lip_c$. Then, for each fixed $z\in\St$, we have that $f^\phi(x,z)\in Lip_c$ and consequently
\begin{equation}\label{eq:InclusionOf_FphiAtZ}
f^\phi(\cdot,z)\in W^{1,p}_0(\omega_0)\cap W^{1,p}_0(\omega_1) \subset W^{1,p}_0(\omega_0)+W^{1,p}_0(\omega_1). 
\end{equation}
\end{lemma}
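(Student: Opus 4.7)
The plan is to reduce the statement to the two obvious steps: first, verify that $f^\phi(\cdot,z)$ is a Lipschitz function on $U$ with compact support, and second, invoke the admissibility hypothesis to conclude the desired inclusion into $W_0^{1,p}(\omega_0)\cap W_0^{1,p}(\omega_1)$. The final inclusion into $W_0^{1,p}(\omega_0)+W_0^{1,p}(\omega_1)$ is trivial once membership in the intersection is established, since both summand spaces sit continuously inside $A_0+A_1$ in the usual sense.

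First I would rewrite, for $z\in\mathbb{S}$ fixed,
\[
f^\phi(x,z)=e^{\beta(z-\theta)^2}\,h_z(x)\,\phi(x),\qquad h_z(x):=\exp\!\left(\tfrac{z-\theta}{p}\log r_\omega(x)\right),
\]
noting that $e^{\beta(z-\theta)^2}$ is merely a constant (in $x$) multiplier and may be ignored. Since $\mathrm{supp}\,\phi$ is a compact subset $K$ of $U$, the product $h_z\phi$ is supported in $K$, so the compact support part of ``$\in Lip_c$'' is automatic. For the Lipschitz part, I would observe that by Definition~\ref{def:admissibility}\eqref{item:bounded_gradient} the function $\log r_\omega$ is locally Lipschitz, hence, by Remark~\ref{rem:E2VZZ}, it is bounded and Lipschitz on every open set $V$ with $\overline{V}\subset U$ compact. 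Choosing such a $V$ with $K\subset V$, the complex exponential $w\mapsto e^{((z-\theta)/p)w}$ is Lipschitz on the bounded image $\log r_\omega(\overline{V})\subset\mathbb{C}$, so the composition $h_z$ is Lipschitz on $\overline{V}$, and in particular bounded there.

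Next I would combine this with the fact that $\phi$ is globally Lipschitz on $U$ and bounded, so that $h_z\phi$ is Lipschitz on $\overline{V}$ by the usual product rule for Lipschitz functions. To upgrade this to a Lipschitz bound on all of $U$, I would use the standard separation argument: for $x\in U\setminus V$ and $y\in V$ with $y\notin K$, both $\phi(x)$ and $\phi(y)$ vanish; for $x\in U\setminus V$ and $y\in K$, one has $|x-y|\geq d(K,U\setminus V)>0$ (since $K$ is compact and $U\setminus V$ closed and disjoint), so $|(h_z\phi)(x)-(h_z\phi)(y)|=|h_z(y)\phi(y)|$ is dominated by a constant multiple of $|x-y|$. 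Thus $f^\phi(\cdot,z)\in Lip_c$.

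Finally, admissibility condition~\eqref{item:W_is_banach_with_lip} gives $Lip_c\subset W^{1,p}(\omega_j)$ for $j=0,1$, and Definition~\ref{def:W_0} then places $Lip_c$ inside $W_0^{1,p}(\omega_j)$ trivially (since $W_0^{1,p}(\omega_j)$ is the closure of $Lip_c$ in the corresponding norm). This yields $f^\phi(\cdot,z)\in W_0^{1,p}(\omega_0)\cap W_0^{1,p}(\omega_1)$, and the inclusion into the sum is immediate. The only delicate point in the whole argument is making sure that the Lipschitz estimate for $h_z\phi$ on the neighbourhood $V$ of $K$ propagates to a global Lipschitz estimate on $U$; everything else is bookkeeping based on the definitions and the local Lipschitz regularity of $\log r_\omega$.
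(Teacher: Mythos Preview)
Your proof is correct and follows essentially the same route as the paper: decompose $f^\phi(\cdot,z)$ as the product of $\phi$ with $\exp\!\big(\tfrac{z-\theta}{p}\log r_\omega\big)$ (times a constant), use the local Lipschitz regularity of $\log r_\omega$ and of the exponential to get a local Lipschitz estimate, and then promote this to a global Lipschitz bound via a separation argument exploiting the compact support. The paper packages the last step as a standalone fact (``locally Lipschitz with compact support implies Lipschitz on all of $U$''), proved in the appendix by exactly the separation argument you wrote out inline, so the two arguments are the same in substance.
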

Before we begin proving this lemma, it is convenient to present a few simple facts that we will use in the proof of this lemma, as well as in subsequent proofs:
\begin{enumerate}[(a)]
\item \label{fact:LocLipschitzGoesGlobal} If the function $\psi:U\to\mathbb{C}$ is locally Lipschitz and has compact support, then it is Lipschitz on all of $U$. This rather standard result is perhaps slightly less obvious than may seem at first glance. For the sake of completeness, we include its quite straightforward and short proof in the appendix.
\item\label{item:loc_lip_bounded_on_compact} Every Lipschitz function is bounded on every compact set. Therefore every locally Lipschitz function also has this property.
\item\label{item:pw_lip_multi} The pointwise product of two bounded Lipschitz functions is itself also a bounded Lipschitz function. So, by \eqref{item:loc_lip_bounded_on_compact}, the pointwise product of two locally Lipschitz functions is also locally Lipschitz.
\item\label{item:composition} The composition of two Lipschitz functions is a Lipschitz function. Therefore the composition of two locally Lipschitz functions is a locally Lipschitz function. 
\item\label{item:bounded_derivative_is_lip} A function mapping $\mathbb{R}$ to $\mathbb{C}$ whose real and imaginary parts have bounded derivatives on every bounded interval is a locally Lipschitz function.
\end{enumerate}
\begin{proof}[Proof of Lemma \ref{lem:range}]
Let us define $g_{1}:U\to\mathbb{R}$ by $g_{1}(x)=\log\left(r_{\omega}(x)\right)$ and define $g_{2}:\mathbb{R}\to\mathbb{C}$ by $g_{2}(t)=e^{\frac{\left(z-\theta\right)}{p}t+\beta(z-\theta)^{2}}$. Then $f^{\phi}(x,z)=\phi(x)g_{2}\left(g_{1}(x)\right)$ for all $x\in U$. Since $\phi$ has compact support so does $\phi(x)g_{2}\left(g_{1}(x)\right)$.
So, in view of fact \eqref{fact:LocLipschitzGoesGlobal} it suffices to show that $\phi(x)g_{2}\left(g_{1}(x)\right)$ is locally Lipschitz, which we will now do with the help of the other above mentioned obvious facts.\\
We first use \eqref{item:bounded_derivative_is_lip} to show that $g_{2}$ is locally Lipschitz. The $(\theta,p)-$admissibility of $\left(\omega_{0},\omega_{1}\right)$ implies that $g_{1}$ is also locally Lipschitz. So \eqref{item:composition} gives us that $g_{2}\circ g_{1}$ is locally Lipschitz. Finally, since $\phi$ is obviously locally Lipschitz,
\eqref{item:pw_lip_multi} completes the proof that $\phi(x)g_{2}\left(g_{1}(x)\right)$ is locally Lipschitz and therefore also the proof of this lemma.
\end{proof}
\begin{lemma}\label{lem:analyticity_continuity_and_boundedness}
Let $\pa{\omega_0,\omega_1}$ form a $(\theta,p)-$admissible pair for some $\theta\in(0,1)$ and $p\in[1,\infty)$, and let $\phi\in Lip_c$. Then the function $f^\phi$ is an element of the space $\F\pa{W^{1,p}_0\pa{\omega_0},W^{1,p}_0\pa{\omega_1}}$.
\end{lemma}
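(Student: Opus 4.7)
The plan is to verify the three defining properties of membership in $\F\pa{W^{1,p}_0(\omega_0), W^{1,p}_0(\omega_1)}$: that $z \mapsto f^\phi(\cdot, z)$ is (a) continuous and bounded on $\St$ as a map into $W^{1,p}_0(\omega_0) + W^{1,p}_0(\omega_1)$, (b) analytic on the interior of $\St$, and (c) continuous and bounded as a map from $\br{j + it : t \in \R}$ into $W^{1,p}_0(\omega_j)$ for each $j \in \br{0,1}$. Lemma \ref{lem:range} already places each $f^\phi(\cdot, z)$ in $W^{1,p}_0(\omega_0) \cap W^{1,p}_0(\omega_1)$, and this intersection injects contractively into the sum, so it suffices to prove the stronger versions of (a) and (c), namely continuity and boundedness into each individual $W^{1,p}_0(\omega_j)$. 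The uniform input throughout the argument is that, since $\phi \in Lip_c$ has compact support $K \subset U$, the local Lipschitz hypothesis on $\log \rw$ together with Theorem \ref{thm:rademacher} and Remark \ref{rem:UtterlyObvious} furnishes constants $M$ and $L$ such that $\abs{\log \rw(x)} \le M$ and $\abs{\nabla \log \rw(x)} \le L$ for almost every $x \in K$.

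For (a) and (c), the key pointwise identity
\[ \abs{f^\phi(x,z)}^p \omega_j(x) = \rw(x)^{\Re z - \theta}\omega_j(x)\,e^{p\beta[(\Re z - \theta)^2 - (\Im z)^2]}\abs{\phi(x)}^p \]
vanishes outside $K$, and inside $K$ the factor $\rw^{\Re z - \theta}$ is bounded uniformly for $z \in \St$ by $e^{M\max(\theta,1-\theta)}$; at the boundary $\Re z = j$ it collapses to $\rw^{j-\theta}\omega_j = \omega_\theta$. Using the product rule for the weak gradient, $\nabla_x f^\phi(x,z)$ equals the same exponential factor multiplied by $\nabla\phi(x) + \tfrac{z-\theta}{p}\phi(x)\nabla\log\rw(x)$, and the same bounds majorise $\abs{\nabla f^\phi(x,z)}^p\omega_j(x)$ by a pointwise integrable function that is uniformly bounded in $z$ on horizontal slices of $\St$ and decays like $e^{-\beta(\Im z)^2}$ as $\abs{\Im z} \to \infty$. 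Dominated convergence then delivers continuity of $z \mapsto f^\phi(\cdot, z)$ in the $W^{1,p}(\omega_j)$ norm on each slice, and in particular on both boundary lines, while the Gaussian decay in $\Im z$ supplies the required boundedness.

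For analyticity on the interior of $\St$, fix $x$ and observe that $z \mapsto f^\phi(x,z)$ is entire, with pointwise derivative $\partial_z f^\phi(x,z) = f^\phi(x,z)\bigl[\tfrac{\log\rw(x)}{p} + 2\beta(z-\theta)\bigr]$. Given $z_0$ in the interior and a closed disc $D$ around $z_0$ contained in the interior of $\St$, Taylor's theorem gives
\[ \abs{f^\phi(x,z) - f^\phi(x,z_0) - (z-z_0)\partial_z f^\phi(x,z_0)} \le \tfrac{\abs{z-z_0}^2}{2}\sup_{w \in D}\abs{\partial_z^2 f^\phi(x,w)} \]
for $z \in D$, together with an analogous bound for the $x$-gradient of the same expression. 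Because $\partial_z^2 f^\phi$ and its $x$-gradient are polynomial expressions in $\log\rw(x)$, $\nabla\log\rw(x)$, $\phi(x)$, $\nabla\phi(x)$ and $(w-\theta)$ multiplied by $f^\phi(x,w)$ --- all uniformly bounded on $K \times D$ by the constants above and the Lipschitz constant of $\phi$ --- the Taylor remainder is $O(\abs{z-z_0}^2)$ in the $W^{1,p}(\omega_j)$ norm. This yields strong differentiability of $z \mapsto f^\phi(\cdot,z)$ into $W^{1,p}_0(\omega_j)$ at $z_0$, and hence also into the sum space. I expect this last step to be the main obstacle: not the existence of the pointwise derivative, which is immediate, but the uniform control on the Taylor remainder needed to upgrade pointwise analyticity to strong norm analyticity --- precisely the place where the compactness of $\text{supp}\,\phi$ and the admissibility of $(\omega_0,\omega_1)$, through the local boundedness of $\log\rw$ and $\nabla\log\rw$, come together decisively.
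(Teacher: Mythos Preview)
Your proof is correct and takes a genuinely different route from the paper's. The paper never verifies continuity, boundedness, or analyticity of $f^\phi$ directly. Instead it expands $e^{\frac{(z-\theta)}{p}\log r_\omega(x)}$ as a power series in $(z-\theta)$ and considers the partial sums
\[
f_N^\phi(x,z)=e^{\beta(z-\theta)^2}\sum_{k=0}^N \frac{(\log r_\omega(x))^k(z-\theta)^k}{p^k k!}\,\phi(x).
\]
Each $f_N^\phi$ is a finite sum of terms of the form (bounded entire scalar function of $z$)$\times$(fixed element of $Lip_c$), hence is trivially in $\mathcal{F}(A_0,A_1)$ with $A_0=A_1=W^{1,p}_0(\omega_0)\cap W^{1,p}_0(\omega_1)$. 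The paper then derives a norm estimate $\|(\log r_\omega)^k\phi\|_{W^{1,p}_0(\omega_j)}\le C\Lambda^{k-1}(\Lambda+k\Xi)\|\phi\|_{W^{1,p}(\omega_j)}$, combines it with the bound $\sup_{z\in\St}|e^{\beta(z-\theta)^2}(z-\theta)^k|\le e^{2\beta}(k/2\beta e)^{k/2}$, and applies a ratio test to show $\{f_N^\phi\}$ is Cauchy in $\mathcal{F}(A_0,A_1)$. Completeness of $\mathcal{F}$ then gives a limit $g\in\mathcal{F}(A_0,A_1)$, which is identified with $f^\phi$ by extracting an a.e.\ convergent subsequence in $L^p$ and comparing with the pointwise Taylor limit.

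Your approach is more elementary in that it avoids the completeness of $\mathcal{F}(\overline{A})$ and the series estimates altogether: you exploit the compact support of $\phi$ to produce a single $z$-independent integrable dominating function for both $|f^\phi(\cdot,z)|^p\omega_j$ and $|\nabla_x f^\phi(\cdot,z)|^p\omega_j$, which simultaneously delivers boundedness and (via dominated convergence) continuity, and then a second-order Taylor remainder bound for norm analyticity. The paper's approach, on the other hand, packages everything into a single ``Cauchy in $\mathcal{F}$'' argument, which is a standard device in complex interpolation and makes the analyticity automatic without any remainder estimate. Your anticipated obstacle---upgrading pointwise analyticity to norm analyticity---is handled cleanly by your own argument and is precisely the step the paper sidesteps via the series construction.
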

\begin{proof} 
For reasons which the reader can quite possibly guess and which anyway will become apparent later, we begin this proof by establishing some properties of the functions $\left(\log\left(r_{\omega}(x)\right)\right)^{k}$ and $\left(\log\left(r_{\omega}(x)\right)\right)^{k}\phi(x)$ for each non-negative integer $k$. 
We first note that the $(\theta,p)-$admissibility of $\left(\omega_{0},\omega_{1}\right)$, together with facts \eqref{item:composition} and \eqref{item:bounded_derivative_is_lip} that were stated after Lemma \ref{lem:range}, ensure that, for each such $k$, the function $\left(\log\left(r_{\omega}(x)\right)\right)^{k}$ is locally Lipschitz. Consequently, using fact \eqref{item:pw_lip_multi} and then fact \ref{fact:LocLipschitzGoesGlobal}, we obtain that $\left(\log\left(r_{\omega}(\cdot)\right)\right)^{k}\phi(\cdot)$ is in $Lip_{c}$. This in turn implies, again because $\left(\omega_{0},\omega_{1}\right)$ is $(\theta,p)-$admissible, that this same function is also an element of $W_{0}^{1,p}(\omega_{0})$ and of $W_{0}^{1,p}(\omega_{1})$.\\
Our next task will be to estimate the norm of this function in each of these spaces.\\
The fact that $\phi$ has compact support ensures the existence of an open set $V_{\phi}$ whose closure is a compact subset of $U$ such that at every point $x\in U\setminus V_{\phi}$ we have that $\phi(x)=0$ and also that the pointwise gradient $\nabla\phi(x)$ exists and also equals $0$. \\
We observe that, for $j\in\left\{ 0,1\right\}$, 
$$ \left\Vert \left(\log\left(r_{\omega}(\cdot)\right)\right)^{k}\phi(\cdot)\right\Vert _{L^{p}\left(\omega_{j}\right)}^{p}=\int_{U}\left\vert \phi(x)\right\vert ^{p}\left\vert \log\left(r_{\omega}(x)\right)\right\vert ^{pk}\omega_{j}(x)dx$$
$$ \leq\left(\sup_{x\in V_{\phi}}\left\vert \log\left(r_{\omega}(x)\right)\right\vert \right)^{pk}\left\Vert \phi\right\Vert _{L^{p}\left(\omega_{j}\right)}^{p}=\Lambda^{pk}\left\Vert \phi\right\Vert _{L^{p}\left(\omega_{j}\right)}^{p},$$
where $\Lambda$ denotes the quantity
$$\Lambda:=\sup_{x\in V_{\phi}}\left\vert \log\left(r_{\omega}(x)\right)\right\vert $$
which is finite, as the supremum of a Lipschitz function on a bounded set. \\
Next we need to make some more preparations for performing a somewhat more elaborate calculation to estimate the size of $\left\Vert \nabla\left(\left(\log\left(r_{\omega}(\cdot)\right)\right)^{k}\phi(\cdot)\right)\right\Vert _{L^{p}\left(\omega_{j}\right)}^{p}$.\\
We note the following:
\begin{itemize}
\item For any $a,b\in\R$ we have that 
\begin{equation}\label{eq:known_ineq}
\left(\left\vert a\right\vert +\left\vert b\right\vert \right)^{p}\leq2^{p}\left(\left\vert a\right\vert ^{p}+\left\vert b\right\vert ^{p}\right),\quad\quad p\geq0.
\end{equation}
\item By part \eqref{item:diff_ae_and_weak_derivative} of Theorem \ref{thm:rademacher} we know that the pointwise gradients $\nabla\log\left(r_{\omega}(x)\right)$
and $\nabla\phi(x)$ of the locally Lipschitz functions $\log\left(r_{\omega}(x)\right)$ and $\phi(x)$ both exist for all $x$ in a certain subset $U_{\#}$ of $U$ which is such that $U\setminus U_{\#}$ has measure $0$. We shall let $\Xi$ denote the supremum 
\begin{equation}\label{eq:DefineXo}
\Xi:= \sup_{x\in V_{\phi}\cap U_{\#}}\abs{\nabla \log\pa{\rw(x)}},
\end{equation}
which is obviously finite (cf. Remark \ref{rem:UtterlyObvious}).
\item Furthermore it is also obvious that the pointwise gradient of $\left(\log\left(r_{\omega}(x)\right)\right)^{k}$ exists and equals $k\left(\log\left(r_{\omega}(x)\right)\right)^{k-1}\nabla\log\left(r_{\omega}(x)\right)$ for every $x\in U_{\#}$. As already observed above, the function
$\left(\log\left(r_{\omega}(x)\right)\right)^{k}$ is locally Lipschitz. So another application of Rademacher's theorem tells us that the pointwise gradient of $\left(\log\left(r_{\omega}(x)\right)\right)^{k}$ on $U_{\#}$ is also its weak gradient on $U$. Analogous statements hold for the pointwise
and weak gradients of the functions $\phi$ and $\left(\log\left(r_{\omega}(x)\right)\right)^{k}\phi(x)$.
\end{itemize}
Having made these preparations, we can now see that, for $j\in\left\{ 0,1\right\} $ and for each $k\in\mathbb{N}$,
$$ \left\Vert \nabla\left(\left(\log\left(r_{\omega}(\cdot)\right)\right)^{k}\phi(\cdot)\right)\right\Vert _{L^{p}\left(\omega_{j}\right)}^{p}\leq2^{p}\int_{V_\phi}\left\vert \nabla\phi(x)\right\vert ^{p}\left\vert \log\left(r_{\omega}(x)\right)\right\vert ^{pk}\omega_{j}(x)dx$$
$$ +2^{p}k^{p}\int_{V_{\phi}}\left\vert \phi(x)\right\vert ^{p}\left\vert \nabla\log\left(r_{\omega}(x)\right)\right\vert ^{p}\left\vert \log\left(r_{\omega}(x)\right)\right\vert ^{p(k-1)}\omega_{j}(x)dx$$
$$\leq2^{p}\Lambda^{p(k-1)}\left(\Lambda^{p}\left\Vert \nabla\phi\right\Vert _{L^{p}\left(\omega_{j}\right)}^{p}+k^{p}\Xi^{p}\left\Vert \phi\right\Vert _{L^{p}\left(\omega_{j}\right)}^{p}\right)$$
$$\leq2^{p}\Lambda^{p(k-1)}\left(\Lambda^{p}+k^{p}\Xi^{p}\right)\left\Vert \phi\right\Vert _{W^{1,p}\left(\omega_{j}\right)}^{p},$$
and for $k=0$ we obviously obtain that 
$$\left\Vert \nabla\left(\left(\log\left(r_{\omega}(\cdot)\right)\right)^{k}\phi(\cdot)\right)\right\Vert _{L^{p}\left(\omega_{j}\right)}^{p}\le\left\Vert \phi\right\Vert _{W^{1,p}\left(\omega_{j}\right)}^{p}.$$
We already noted above that $\left(\log\left(r_{\omega}(\cdot)\right)\right)^{k}\phi(\cdot)\in W_{0}^{1,p}(\omega_{j})$ for $j=0,1$. Now we can conclude furthermore, from the preceding estimates, that, for each $k\in\mathbb{N}$, 
$$\left\Vert \left(\log\left(r_{\omega}(\cdot)\right)\right)^{k}\phi(\cdot)\right\Vert _{W_{0}^{1,p}\left(\omega_{j}\right)}=\left(\left\Vert \left(\log\left(r_{\omega}(\cdot)\right)\right)^{k}\phi(\cdot)\right\Vert _{L^{p}\left(\omega_{j}\right)}^{p}+\left\Vert \nabla\left(\left(\log\left(r_{\omega}(\cdot)\right)\right)^{k}\phi(\cdot)\right)\right\Vert _{L^{p}\left(\omega_{j}\right)}^{p}\right)^{\frac{1}{p}}$$
$$\leq\left(\Lambda^{pk}\left\Vert \phi\right\Vert _{L^{p}\left(\omega_{j}\right)}^{p}+2^{p}\Lambda^{p(k-1)}\left(\Lambda^{p}+k^{p}\Xi^{p}\right)\left\Vert \phi\right\Vert _{W^{1,p}\left(\omega_{j}\right)}^{p}\right)^{\frac{1}{p}}$$
$$\leq \left(2^{p}\Lambda^{p(k-1)}\left(2^{-p}\Lambda^{p}+\Lambda^{p}+k^{p}\Xi^{p}\right)\left\Vert \phi\right\Vert _{W^{1,p}\left(\omega_{j}\right)}^{p}\right)^{\frac{1}{p}}$$
$$\leq 2\Lambda^{k-1}\left(2\Lambda^{p}+k^{p}\Xi^{p}\right)^{1/p}\left\Vert \phi\right\Vert _{W^{1,p}(\omega_{j})}. $$
We estimate this last expression, using \eqref{eq:known_ineq} again, but this time with $\frac{1}{p}$ in place of $p$, and thus
obtain that 
\begin{equation}
\left\Vert \left(\log\left(r_{\omega}(\cdot)\right)\right)^{k}\phi(\cdot)\right\Vert _{W_{0}^{1,p}\left(\omega_{j}\right)}\le2^{1+\frac{1}{p}}\Lambda^{k-1}\left(2^{\frac{1}{p}}\Lambda+k\Xi\right)\left\Vert \phi\right\Vert _{W^{1,p}(\omega_{j})}.\label{eq:NiceIneq}
\end{equation}
The analogue of this inequality for $k=0$ is just
the trivial fact that 
$$\left\Vert \phi\right\Vert _{W_{0}^{1,p}(\omega_{j})}=\left\Vert \phi\right\Vert _{W^{1,p}(\omega_{j})}.$$

For each $N\in\mathbb{N}$ we define the function $f_{N}^{\phi}:\mathbb{S}\to Lip_{c}$
by setting 
\begin{equation}
f_{N}^{\phi}(x,z)=e^{\beta\left(z-\theta\right)^{2}}\sum_{k=0}^{N}\frac{\left(\log\left(r_{\omega}(x)\right)\right)^{k}\left(z-\theta\right)^{k}}{p^{k}k!}\phi(x).\label{eq:definefp}
\end{equation}
Here, analogously to the convention adopted in Definition \ref{def:f_phiZZ}, it must be understood that $f_{N}^{\phi}$ denotes the map from $\mathbb{S}$ into the space of measurable functions on $U$, which is such that the image $f_{N}^{\phi}(z)$ of each $z\in\mathbb{S}$ is the function of $x$ defined by \eqref{eq:definefp}. 
For each $k\in\mathbb{N}\cup\left\{ 0\right\} $, the function $\left(\log\left(r_{\omega}(\cdot)\right)\right)^{k}\phi(\cdot)$ is known to be an element of $Lip_{c}$ and therefore also of $W_{0}^{1,p}\left(\omega_{0}\right)\cap W_{0}^{1,p}\left(\omega_{0}\right)$.
Also, a simple calculation shows that the entire function 
$$z\mapsto e^{\beta(z-\theta)^{2}}(z-\theta)^{k}$$
is bounded on $\mathbb{S}$. (Later we will do this calculation in more detail, to find an explicit bound for the modulus of this function on $\St$.) From these two facts it is obvious that $f_{N}^{\phi}$ indeed maps $\mathbb{S}$ into $Lip_{c}$ and, furthermore,
that it is an element of $\mathcal{F}\left(A_{0},A_{1}\right)$, where, throughout this proof, we will take $\left(A_{0},A_{1}\right)$ to be the Banach couple obtained by setting $A_{0}=A_{1}=W_{0}^{1,p}\left(\omega_{0}\right)\cap W_{0}^{1,p}\left(\omega_{0}\right)$ and letting $A_{0}$ and $A_{1}$ be normed by $\left\Vert \psi\right\Vert _{A_{0}}=\left\Vert \psi\right\Vert _{A_{1}}=\max\left\{ \left\Vert \psi\right\Vert _{W_{0}^{1,p}(\omega_{0})},\left\Vert \psi\right\Vert _{W_{0}^{1,p}(\omega_{1})}\right\}$.
Our next step will be to use the inequalities \eqref{eq:NiceIneq} to show that $\left\{ f_{N}^{\phi}\right\} _{N\in\mathbb{N}}$ is a Cauchy sequence in $\mathcal{F}\left(A_{0},A_{1}\right)$. For that it will clearly suffice to consider the quantity 
$$E(M,N):=\left\Vert e^{\beta\left(z-\theta\right)^{2}}\sum_{k=M}^{N}\frac{\left(\log\left(r_{\omega}(x)\right)\right)^{k}\left(z-\theta\right)^{k}}{p^{k}k!}\phi(x)\right\Vert _{\mathcal{F}\left(A_{0},A_{1}\right)}$$
for all integers $M$ and $N$ which satisfy $0\le M\le N$ and to show that $E(M,N)$ tends to $0$ when $M$ and $N$ tend to $\infty$. \\
We see that 
\begin{equation}\nonumber
\begin{gathered}
E(M,N)  \le\sum_{k=M}^{N}\left\Vert e^{\beta\left(z-\theta\right)^{2}}\frac{\left(\log\left(r_{\omega}(x)\right)\right)^{k}\left(z-\theta\right)^{k}}{p^{k}k!}\phi(x)\right\Vert _{\mathcal{F}\left(A_{0},A_{1}\right)}\\
=\sum_{k=M}^{N}\max_{j=0,1}\sup_{t\in\mathbb{R}}\frac{\left|e^{\beta\left(j+it-\theta\right)^{2}}\left(j+it-\theta\right)^{k}\right|}{p^{k}k!}\left\Vert \left(\log\left(r_{\omega}(x)\right)\right)^{k}\phi(x)\right\Vert _{A_{j}}.
\end{gathered}
\end{equation}
In view of \eqref{eq:NiceIneq} we have that 
\begin{equation}\nonumber
\begin{gathered}
\left\Vert \left(\log\left(r_{\omega}(x)\right)\right)^{k}\phi(x)\right\Vert _{A_{0}} =\left\Vert \left(\log\left(r_{\omega}(x)\right)\right)^{k}\phi(x)\right\Vert _{A_{1}}\\
\le2^{1+\frac{1}{p}}\Lambda^{k-1}\left(2^{\frac{1}{p}}\Lambda+k\Xi\right)\max_{j\in\{0,1\}}\left\Vert \phi\right\Vert _{W^{1,p}\left(\omega_{j}\right)}
\end{gathered}
\end{equation}
for all $k\in\mathbb{N}$. The analogue of this for $k=0$ is simply and obviously 
$$\left\Vert \phi\right\Vert _{A_{0}}=\left\Vert \phi\right\Vert _{A_{i}}=\max_{j\in\{0,1\}}\left\Vert \phi\right\Vert _{W^{1,p}\left(\omega_{j}\right)}$$
We also have, for each $k\in\mathbb{N}$, that
\begin{equation}\label{eq:PleaseGatherThis}
\begin{gathered}
\max_{j=0,1}\sup_{t\in\mathbb{R}}\left|e^{\beta\left(j+it-\theta\right)^{2}}\left(j+it-\theta\right)^{k}\right|\\
\le\sup_{t\ge0}e^{\beta(1-t^{2})}\pa{1+t^{2}}^{\frac{k}{2}}=e^{2\beta}\sup_{t\ge0}e^{-\beta(1+t^{2})}\pa{1+t^{2}}^{\frac{k}{2}}\\
=e^{2\beta}\sup_{y\ge1}e^{-\beta y^{2}}y^{k},
\end{gathered}
\end{equation}
and a simple computation shows that 
\begin{equation}\label{eq:known_inequality_II-1}
\max_{y\geq0}y^{k}e^{-\beta y^{2}}=\left(\frac{k}{2\beta e}\right)^{\frac{k}{2}}\quad \text{for each }\; k>0.
\end{equation}
These preceding inequalities show
that 
$$E(M,N)\le\max_{j\in\{0,1\}}\left\Vert \phi\right\Vert _{W^{1,p}\left(\omega_{j}\right)}\sum_{k=M}^{N}\gamma_{k}$$
where, for each $k\in\mathbb{N}$, 
$$\gamma_{k}=\frac{2^{1+\frac{1}{p}}\Lambda^{k-1}\left(2^{\frac{1}{p}}\Lambda+k\Xi\right)}{p^{k}k!}\left(\frac{k}{2\beta e}\right)^{\frac{k}{2}}e^{2\beta},$$
and for $k=0$, since $\max_{j=0,1}\sup_{t\in\mathbb{R}}\left|e^{\beta\left(j+it-\theta\right)^{2}}\right|\le e^{\beta}$,
$$\gamma_{0}=e^{\beta}.$$
For each $k\in\mathbb{N}$ we have that 
\begin{equation}\nonumber
\begin{gathered}
\frac{\gamma_{k+1}}{\gamma_{k}} =\frac{\Lambda}{p(k+1)}\cdot\frac{\left(2^{\frac{1}{p}}\Lambda+(k+1)\Xi\right)}{\left(2^{\frac{1}{p}}\Lambda+k\Xi\right)}\cdot\frac{1}{\sqrt{2\beta e}}\cdot\left(\frac{k+1}{k}\right)^{\frac{k}{2}}\sqrt{k+1}\\
=\frac{\Lambda}{p\sqrt{2\beta e}}\cdot\frac{\left(2^{\frac{1}{p}}\Lambda+(k+1)\Xi\right)}{\left(2^{\frac{1}{p}}\Lambda+k\Xi\right)}\cdot\left(1+\frac{1}{k}\right)^{\frac{k}{2}}\frac{1}{\sqrt{k+1}},
\end{gathered}
\end{equation}
from which it is clear that $\lim_{k\to\infty}\frac{\gamma_{k+1}}{\gamma_{k}}=0$. This shows that the positive term series $\sum_{k=0}^{\infty}\gamma_{k}$ is convergent and this implies that $E(M,N)$ indeed has the behaviour required to show that $\left\{ f_{N}^{\phi}\right\} _{N\in\mathbb{N}}$ is a Cauchy sequence in $\mathcal{F}\left(A_{0},A_{1}\right)$. 
Therefore $\left\{ f_{N}^{\phi}\right\} _{N\in\mathbb{N}}$ converges in $\mathcal{F}\left(A_{0},A_{1}\right)$ norm to an element $g\in\mathcal{F}\left(A_{0},A_{1}\right)$. In our setting, where $A_{0}$ and $A_{1}$ are the same space with the same norm, each element $h\in\mathcal{F}\left(A_{0},A_{1}\right)$ is a continuous bounded $A_{0}$ valued function on $\mathbb{S}$ which is analytic in the interior of $\mathbb{S}$. Therefore an obvious extension of the Phragmen-Lindel\"of theorem to Banach space valued
analytic functions shows that 
$$\left\Vert h(z)\right\Vert _{A_{0}}\le\sup_{\zeta\in\partial\mathbb{S}}\left\Vert h(\zeta)\right\Vert _{A_{0}}=\left\Vert h\right\Vert _{\mathcal{F}\left(A_{0},A_{1}\right)}$$
for each $z\in\mathbb{S}$. In particular, this means that, for each fixed $z\in\mathbb{S}$, $g(z)\in A_{0}$ and 
\begin{equation}\label{eq:fNtoG}
\lim_{N\to\infty}\left\Vert f_{N}^{\phi}(z)-g(z)\right\Vert _{A_{0}}=0.
\end{equation}
Since $g(z)$ is an element of $A_{0}$ and therefore a measurable function, or rather an equivalence class of measurable functions on $U$, it will be convenient to let $g(x,z)$ denote the value of $g(z)$ at each (or almost every) point $x$ of $U$. It follows from (\ref{eq:fNtoG}) that, for each fixed $z\in\mathbb{S}$ the sequence $\left\{ f_{N}^{\phi}(z)\right\} _{N\in\mathbb{N}}$ converges to $g(z)$ also in $L^{p}(\omega)$ when $\omega$ is either one of the weight functions $\omega_{0}$ and $\omega_{1}$. By standard results, a norm convergent sequence in any (weighted or unweighted) $L^{p}$ space must have a subsequence which converges pointwise at almost every point of the underlying measure space. So there exists a subset $U_{z}$ of $U$ possibly depending on $z$, and an unbounded subsequence $\left\{ N_{k}\right\} _{k\in\mathbb{N}}$ of positive integers, also possibly depending on $z$, such that $U\setminus U_{z}$ has measure zero and $g(x,z)=\lim_{k\to\infty}f_{N_{k}}^{\phi}(x,z)$
for all $x\in U_{z}$. \\
But, from the formula \eqref{eq:definefp} we see that the ``whole'' sequence $\left\{ f_{N}^{\phi}(x,z)\right\} _{N\in\mathbb{N}}$ converges pointwise for \textit{every} $x\in U$ and that its pointwise limit is 
$$e^{\frac{\left(z-\theta\right)}{p}\log\left(r_{\omega}(x)\right)+\beta(z-\theta)^{2}}\phi(x)=f_{\beta,\theta}^{\phi}(x,z).$$
This shows that $f^{\phi}(x,z)=g(x,z)$ for every $x\in U_{z}$ which means that $f^{\phi}(z)$ and $g(z)$ are the same element of $W_{0}^{1,p}(\omega_{0})\cap W_{0}^{1,p}(\omega_{1})$ for each $z\in\mathbb{S}$. Consequently $f^{\phi}$ coincides with $g$ and is therefore an element of $\mathcal{F}\left(A_{0},A_{1}\right)$.
Since $A_{0}$ is continuously embedded in $W_{0}^{1,p}(\omega_{0})$ and $A_{1}$ is continuously embedded in $W_{0}^{1,p}(\omega_{1})$ it follows that $\mathcal{F}\left(A_{0},A_{1}\right)$ is continuously embedded in $\mathcal{F}\left(W_{0}^{1,p}\left(\omega_{0}\right),W_{0}^{1,p}\left(\omega_{1}\right)\right)$
and this completes our proof of Lemma \ref{lem:analyticity_continuity_and_boundedness}. 
\end{proof} 
\begin{remark}\label{rem:AlternativProofOfInclusionZZ}
Note that the fact $f^{\phi}(z)$ and $g(z)$ are the same element of $W_{0}^{1,p}(\omega_{0})\cap W_{0}^{1,p}(\omega_{1})$ for each $z\in\mathbb{S}$ provides an alternative proof of the inclusion \eqref{eq:InclusionOf_FphiAtZ}.
\end{remark}
As well as showing that $f^{\phi}$ is an element of $\mathcal{F}\left(W_{0}^{1,p}\left(\omega_{0}\right),W_{0}^{1,p}\left(\omega_{1}\right)\right)$,
Lemma \ref{lem:analyticity_continuity_and_boundedness}, via its proof, also provides the means to obtain a expression which is an upper bound for $\left\Vert f^{\phi}\right\Vert _{\mathcal{F}\left(W_{0}^{1,p}\left(\omega_{0}\right),W_{0}^{1,p}\left(\omega_{1}\right)\right)}$.
But that very complicated expression\footnote{It depends in quite complicated ways on $\phi$, $\beta$, $p$, $\omega_{0}$ and $\omega_{1}$ including dependence on expressions involving the set $V_{\phi}$ and the numbers $\Lambda$ and $\Xi$ which themselves have complicated dependence on $\phi$, $\omega_{0}$ and $\omega_{1}$.}
is not useful for our needs here. Instead we need the upper bound which will be provided by the next result:
\begin{prop}\label{prop:f_phi_in_interpolation}
Let $\pa{\omega_0,\omega_1}$ form a $(\theta,p)-$admissible pair for some $\theta\in(0,1)$ and $p\in[1,\infty)$, and let $\phi$ be an element of $Lip_c$.\\ 
Then the element $f^\phi_{\beta,\theta}$ of $\mathcal{F}\left(W_{0}^{1,p}\left(\omega_{0}\right),W_{0}^{1,p}\left(\omega_{1}\right)\right)$ defined by \eqref{eq:def_of_f_phi} satisfies
\begin{equation}\label{eq:norm_of_f_phi_in_interpolation}
\norm{f^\phi_{\beta,\theta}}_{\F\pa{W^{1,p}_0\pa{\omega_0},W^{1,p}_0\pa{\omega_1}}} \leq 2e^\beta \max\br{1,\frac{e^\beta}{p\sqrt{2\beta e}}}\norm{\phi}_{\W^p\pa{\theta,\rw}}.
\end{equation}
Consequently, we also obtain that 
\begin{equation}\label{eq:MoreProp}
\left\Vert \phi\right\Vert _{\left[W_{0}^{1,p}\left(\omega_{0}\right),W_{0}^{1,p}\left(\omega_{1}\right)\right]_{\theta}}\leq C_{p}\left\Vert \phi\right\Vert _{\mathcal{W}^{p}(\theta,r_{\omega})}\mbox{ for every \,}\phi\in Lip_{c},
\end{equation}
for a positive constant $C_{p}$ depending only on $p$, which can be chosen to be defined as in \eqref{eq:ValueOfCp}.
\end{prop}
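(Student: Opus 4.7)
The plan is to bound $\|f^\phi_{\beta,\theta}\|_{\F(W^{1,p}_0(\omega_0), W^{1,p}_0(\omega_1))}$ directly from its definition by estimating $\sup_{t\in\R}\|f^\phi_{\beta,\theta}(\cdot, j+it)\|_{W^{1,p}_0(\omega_j)}$ for each $j \in \{0,1\}$. The consequence \eqref{eq:MoreProp} will then be immediate, since $f^\phi_{\beta,\theta}(\theta) = \phi$ makes $f^\phi_{\beta,\theta}$ an admissible competitor in the infimum defining $\|\phi\|_{[W^{1,p}_0(\omega_0), W^{1,p}_0(\omega_1)]_\theta}$.

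First, using Lemma \ref{lem:range} together with Rademacher's theorem (Theorem \ref{thm:rademacher}), I would justify the formula for the weak spatial gradient
$$\nabla f^\phi_{\beta,\theta}(x,z) = e^{\frac{z-\theta}{p}\log r_\omega(x) + \beta(z-\theta)^2}\left[\nabla\phi(x) + \frac{z-\theta}{p}\phi(x)\nabla\log r_\omega(x)\right],$$
valid almost everywhere in $x$ for each fixed $z \in \St$. The crucial algebraic identity is that on the vertical line $\mathrm{Re}(z)=j$ the exponential factor satisfies $\bigl|e^{\frac{z-\theta}{p}\log r_\omega(x)}\bigr|^p \omega_j(x) = r_\omega(x)^{j-\theta}\omega_j(x) = \omega_\theta(x)$ for both $j=0$ and $j=1$, so that both boundary weights collapse to the common weight $\omega_\theta$. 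Combined with $\bigl|e^{\beta(z-\theta)^2}\bigr|^p = e^{p\beta((j-\theta)^2 - t^2)}$, this yields
$$\|f^\phi_{\beta,\theta}(\cdot, j+it)\|_{W^{1,p}(\omega_j)} = e^{\beta((j-\theta)^2-t^2)}\left\|\bigl(\phi, \nabla\phi + \tfrac{j+it-\theta}{p}\phi\nabla\log r_\omega\bigr)\right\|_{L^p(\omega_\theta, \C^{d+1})}.$$
Applying the triangle inequality in $L^p(\omega_\theta, \C^{d+1})$, the right-hand side is majorized by
$$e^{\beta((j-\theta)^2-t^2)}\|\phi\|_{W^{1,p}(\omega_\theta)} + \frac{e^{\beta((j-\theta)^2-t^2)}|j+it-\theta|}{p}\,\bigl\||\phi|\,|\nabla\log r_\omega|\bigr\|_{L^p(\omega_\theta)}.$$

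Finally I would maximize each coefficient over $t \in \R$ and $j \in \{0,1\}$. The first is bounded by $e^\beta$ since $(j-\theta)^2 \le 1$. For the second, substituting $u = (j-\theta)^2+t^2 \ge 0$ rewrites $e^{\beta((j-\theta)^2-t^2)}|j+it-\theta|$ as $e^{2\beta(j-\theta)^2}e^{-\beta u}\sqrt{u}$, which is bounded by $e^{2\beta}/\sqrt{2\beta e}$ by the one-variable optimization \eqref{eq:known_inequality_II-1} applied with $k=1$. To combine the two resulting terms into a single multiple of $\|\phi\|_{\W^p(\theta, r_\omega)}$, I would invoke the elementary inequality $Aa + Bb \le 2\max(A,B)(a^p + b^p)^{1/p}$, valid for $p \ge 1$ and $a,b \ge 0$, with $A = e^\beta$, $B = e^{2\beta}/(p\sqrt{2\beta e})$, $a = \|\phi\|_{W^{1,p}(\omega_\theta)}$, and $b = \||\phi|\,|\nabla\log r_\omega|\|_{L^p(\omega_\theta)}$. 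This yields precisely the constant $2e^\beta \max\{1, e^\beta/(p\sqrt{2\beta e})\}$ appearing in \eqref{eq:norm_of_f_phi_in_interpolation}. No serious obstacle stands in the way; the only delicate point is the balance between the Gaussian decay factor $e^{-\beta t^2}$, inserted precisely to control large-$|t|$ behavior, and the linear factor $|j+it-\theta|$ coming from the $z$-derivative of the exponential, which is resolved by the one-variable optimization above.
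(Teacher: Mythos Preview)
Your proposal is correct and follows essentially the same route as the paper: the same gradient formula, the same key identity $r_\omega^{\,j-\theta}\omega_j=\omega_\theta$ collapsing both boundary norms to $\omega_\theta$, and the same one-variable optimization \eqref{eq:known_inequality_II-1} to handle the factor $e^{\beta((j-\theta)^2-t^2)}|j+it-\theta|$. The only cosmetic difference is that the paper splits at the integrand level via $(|a|+|b|)^p\le 2^p(|a|^p+|b|^p)$ and then takes the maximum of the resulting $p$-th powers, whereas you apply the triangle inequality at the norm level and then invoke $Aa+Bb\le 2\max(A,B)(a^p+b^p)^{1/p}$; both routes yield the identical constant $2e^\beta\max\{1,e^\beta/(p\sqrt{2\beta e})\}$.
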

\begin{remark}\label{rem:optimisationZZ}
In the statement of Proposition \ref{prop:f_phi_in_interpolation} we have used the notation $f_{\beta,\theta}^{\phi}$ instead of its abbreviated variant $f^{\phi}$, and we will sometimes also do this in the following proof of this proposition, in some places where it is necessary to emphasise the dependence of this element on the parameter $\beta$. 
\end{remark} 
\begin{proof}[Proof of Proposition \ref{prop:f_phi_in_interpolation}]
We first notice that
$$\abs{f^\phi(x,z)}=\rw(x)^{\frac{\text{Re}z-\theta}{p}}e^{\beta\pa{\pa{\text{Re}z-\theta}^2-\pa{\text{Im}z}^2}}\abs{\phi(x)}.$$
Then we recall that, as shown just before \eqref{eq:DefineXo} in the proof of Lemma \ref{lem:analyticity_continuity_and_boundedness}, there exists a set $U_{\#}$ (which of course depends on our given function $\phi\in Lip_{c}$) which is almost all of $U$ and consists of all points $x$ for which the pointwise gradients of $\log\left(r_{\omega}(x)\right)$ and of $\phi(x)$ both exist. 
It obviously follows that, at every point $x$ in this same set $U_{\#}$ and for each fixed $z\in\mathbb{S}$, the pointwise gradient with respect to $x$ of $f^{\phi}(x,z)$ exists
and equals 
$$\nabla_{x}f^{\phi}(x,z)=e^{\frac{\left(z-\theta\right)}{p}\log\left(r_{\omega}(x)\right)+\beta(z-\theta)^{2}}\left(\nabla_{x}\phi(x)+\frac{z-\theta} {p}\phi(x)\nabla_{x}\log\left(r_{\omega}(x)\right)\right),$$
and therefore also
$$\left\vert \nabla_{x}f^{\phi}(x,z)\right\vert =r_{\omega}(x)^{\frac{\text{Re}z-\theta}{p}}e^{\beta\left(\left(\text{Re}z-\theta\right)^{2}-\left(\text{Im}z\right)^{2}\right)}\left\vert \nabla_{x}\phi(x)+\frac{z-\theta}{p}\phi(x)\nabla_{x}\log\left(r_{\omega}(x)\right)\right\vert. $$
Since $\phi\in Lip_{c}$, Lemma \ref{lem:range} ensures that we also have $f^{\phi}\left(\cdot,z\right)\in Lip_{c}$. Therefore, by \eqref{item:diff_ae_and_weak_derivative} of Theorem \ref{thm:rademacher}, the weak gradients of each of the functions $\phi$ and $f^{\phi}(\cdot,z)$ exist and coincide almost everywhere with their respective pointwise gradients. We will need to use this fact because the norms appearing in \eqref{eq:norm_of_f_phi_in_interpolation} are defined via weak gradients, rather than pointwise gradients.\\
We will also need to use the fact that, for all $x\in U$ and $j\in\left\{ 0,1\right\}$,
$$\rw(x)^{j-\theta}\omega_j(x) = \pa{\frac{\omega_0(x)}{\omega_1(x)}}^{j-\theta}\omega_j(x)=\omega_\theta(x).$$
We compute
$$\norm{f^\phi(\cdot, j+it)}_{L^p\pa{\omega_j}}^p=\int_{U}\rw(x)^{j-\theta}e^{p\beta\pa{\pa{j-\theta}^2-t^2}}\abs{\phi(x)}^p\omega_j(x)dx$$
$$=\int_{U}e^{p\beta\pa{\pa{j-\theta}^2-t^2}}\abs{\phi(x)}^p\omega_\theta(x)dx\leq e^{p\beta}\norm{\phi}^p_{L^p\pa{\omega_\theta}}.$$
Then we also see that the weak gradients of $f^\phi(\cdot,z)$ and of $\phi$ satisfy
$$\norm{\nabla_x f^\phi(\cdot,j+it)}^p_{L^p\pa{\omega_j}}=\int_{U}\rw(x)^{j-\theta}e^{p\beta\pa{\pa{j-\theta}^2-t^2}}\abs{\nabla_x \phi(x) + \frac{z-\theta}{p}\phi(x)\nabla_x \log\pa{\rw(x)}}^p\omega_j(x)dx$$
$$=\int_{U}e^{p\beta\pa{\pa{j-\theta}^2-t^2}}\abs{\nabla_x \phi(x) + \frac{z-\theta}{p}\phi(x)\nabla_x \log\pa{\rw(x)}}^p\omega_\theta(x)dx$$
which, by \eqref{eq:known_ineq}, does not exceed
\begin{equation}\label{eq:Stum365}
2^{p}\int_{U}e^{p\beta\left(\left(j-\theta\right)^{2}-t^{2}\right)}\left(\left\vert \nabla_{x}\phi(x)\right\vert ^{p}+\frac{\left(\left(j-\theta\right)^{2}+t^{2}\right)^{\frac{p}{2}}}{p^{p}}\left\vert \phi(x)\right\vert ^{p}\left\vert \nabla_{x}\log\left(r_{\omega}(x)\right)\right\vert ^{p}\right)\omega_{\theta}(x)dx.
\end{equation}
In turn, in order to estimate this expression we use the fact that $\left\vert j-\theta\right\vert \leq1$, for one of its terms. For the other term we recall and use the calculations of
\eqref{eq:PleaseGatherThis} and \eqref{eq:known_inequality_II-1} which together, when $k=1$, give us that 
$$e^{\beta\left(\left(j-\theta\right)^{2}-t^{2}\right)}\left(\left(j-\theta\right)^{2}+t^{2}\right)^{\frac{1}{2}}\le\left(\frac{1}{2\beta e}\right)^{\frac{1}{2}}e^{2\beta}.$$
These considerations enable us to deduce that the expression \eqref{eq:Stum365} is dominated by
$$
2^{p}e^{p\beta}\left\Vert \nabla_{x}\phi\right\Vert _{L^{p}\left(\omega_{\theta}\right)}^{p}+2^{p}\frac{e^{2p\beta}}{\left(2\beta e\right)^{\frac{p}{2}}p^{p}}\left\Vert \phi\right\Vert _{L^{p}\left(\omega_{\theta}\left\vert \nabla\log\left(r_{\omega}\right)\right\vert ^{p}\right)}^{p}.
$$
We conclude that
\begin{equation}\nonumber
\begin{gathered}
\left\Vert f^{\phi}\right\Vert _{\mathcal{F}\left(W_{0}^{1,p}\left(\omega_{0}\right),W_{0}^{1,p}\left(\omega_{1}\right)\right)}=\sup_{t\in\mathbb{R}}\left(\left\Vert f^{\phi}(\cdot,it)\right\Vert _{W_{0}^{1,p}\left(\omega_{0}\right)},\left\Vert f^{\phi}(\cdot,1+it)\right\Vert _{W_{0}^{1,p}\left(\omega_{1}\right)}\right)\\
\le\left(e^{p\beta}\left\Vert \phi\right\Vert _{L^{p}\left(\omega_{\theta}\right)}^{p}+2^{p}e^{p\beta}\left\Vert \nabla_{x}\phi\right\Vert _{L^{p}\left(\omega_{\theta}\right)}^{p}+2^{p}\frac{e^{2p\beta}}{\left(2\beta e\right)^{\frac{p}{2}}p^{p}}\left\Vert \phi\right\Vert _{L^{p}\left(\omega_{\theta}\left\vert \nabla\log\left(r_{\omega}\right)\right\vert ^{p}\right)}^{p}\right)^{\frac{1}{p}}\\
\le\left(2^pe^{p\beta}\left\Vert \phi\right\Vert _{W^{1,p}\left(\omega_{\theta}\right)}^{p}+2^{p}\frac{e^{2p\beta}}{\left(2\beta e\right)^{\frac{p}{2}}p^{p}}\left\Vert \phi\right\Vert _{L^{p}\left(\omega_{\theta}\left\vert \nabla\log\left(r_{\omega}\right)\right\vert ^{p}\right)}^{p}\right)^{\frac{1}{p}}.
\end{gathered}
\end{equation}

Recalling the notation introduced in \eqref{eq:norm_on_W}, we see that the preceeding expression is dominated by 
$$
\left(\max\left\{ 2^{p}e^{p\beta},2^{p}\frac{e^{2p\beta}}{\left(2\beta e\right)^{\frac{p}{2}}p^{p}}\right\} \right)^{\frac{1}{p}}\left\Vert \phi\right\Vert _{\mathcal{W}^{p}(\theta,r_{\omega})}=2e^{\beta}\max\left\{ 1,\frac{e^{\beta}}{p\sqrt{2\beta e}}\right\} \left\Vert \phi\right\Vert _{\mathcal{W}^{p}(\theta,r_{\omega})}.$$
This establishes \eqref{eq:norm_of_f_phi_in_interpolation}. Since $f_{\beta,\theta}^{\phi}(x,\theta)=\phi(x)$, we conclude that $\phi\in\left[W_{0}^{1,p}\left(\omega_{0}\right),W_{0}^{1,p}\left(\omega_{1}\right)\right]_{\theta}$ with $\left\Vert \phi\right\Vert _{\left[W_{0}^{1,p}\left(\omega_{0}\right),W_{0}^{1,p}\left(\omega_{1}\right)\right]_{\theta}}\le\left\Vert f_{\beta,\theta}^{\phi}\right\Vert _{\mathcal{F}\left(W_{0}^{1,p}\left(\omega_{0}\right),W_{0}^{1,p}\left(\omega_{1}\right)\right)}$.
Combining this inequality with \eqref{eq:norm_of_f_phi_in_interpolation} and taking the infimum over all values of $\beta$ in the interval $(0,\infty)$, (an infimum which is obviously attained) establishes \eqref{eq:MoreProp} when $C_{p}$ is the constant defined in \eqref{eq:ValueOfCp}. This completes the proof of the proposition.
\end{proof}
We are finally ready to present the proof of Theorem \ref{thm:difficult_inclusion_on_W_0}. In fact nearly all of the tools needed for the proof have already been developed by the preceding results of this section. It remains only to use an approximation argument to extend the result which we already have obtained for functions in $Lip_{c}$ to all functions in $\mathcal{W}_{0}^{p}\left(\theta,r_{\omega}\right)$.
\begin{proof}[Proof of Theorem \ref{thm:difficult_inclusion_on_W_0}]
Let $\phi$ be an arbitrary function in the space $\mathcal{W}_{0}^{p}(\theta,r_{\omega})$. In view of the definition of this space, there exists a sequence of functions $\left\{ \phi_{n}\right\} _{n\in\mathbb{N}}\in Lip_{c}$
such that 
$$\left\Vert \phi-\phi_{n}\right\Vert _{\mathcal{W}^{p}(\theta,r_{\omega})}\underset{n\rightarrow\infty}{\longrightarrow}0.$$
Since $Lip_{c}\subset\left[W_{0}^{1,p}\left(\omega_{0}\right),W_{0}^{1,p}\left(\omega_{1}\right)\right]_{\theta}$ and since, in view of \eqref{eq:MoreProp},
$$\left\Vert \phi_{n}-\phi_{m}\right\Vert _{\left[W_{0}^{1,p}\left(\omega_{0}\right),W_{0}^{1,p}\left(\omega_{1}\right)\right]_{\theta}}\leq C_p\left\Vert \phi_{n}-\phi_{m}\right\Vert _{\mathcal{W}^{p}(\theta,r_{\omega})}\underset{n,m\rightarrow\infty}{\longrightarrow}0,$$
we find that $\left\{ \phi_{n}\right\} _{n\in\mathbb{N}}$ is a Cauchy sequence in $\left[W_{0}^{1,p}\left(\omega_{0}\right),W_{0}^{1,p}\left(\omega_{1}\right)\right]_{\theta}$. 
As $\left[W_{0}^{1,p}\left(\omega_{0}\right),W_{0}^{1,p}\left(\omega_{1}\right)\right]_{\theta}$ is a Banach space we conclude that there exists $g\in\left[W_{0}^{1,p}\left(\omega_{0}\right),W_{0}^{1,p}\left(\omega_{1}\right)\right]_{\theta}$ such that 
$$\left\Vert \phi_{n}-g\right\Vert _{\left[W_{0}^{1,p}\left(\omega_{0}\right),W_{0}^{1,p}\left(\omega_{1}\right)\right]_{\theta}}\underset{n\rightarrow\infty}{\longrightarrow}0.$$
The relevant definitions immediately imply\footnote{In fact we already took note of and used this simple
fact while proving Theorem \ref{thm:SimultaneousApprox}.}
(cf. part \eqref{item:inter_properties_inclusion_of_norms_and_subspaces} of Theorem \ref{thm:OtherProperties}) that 
$$
\left[W_{0}^{1,p}\left(\omega_{0}\right),W_{0}^{1,p}\left(\omega_{1}\right)\right]_{\theta}\subset\left[W^{1,p}\left(\omega_{0}\right),W^{1,p}\left(\omega_{1}\right)\right]_{\theta},
$$
and
$$\left\Vert \cdot\right\Vert _{\left[W^{1,p}\left(\omega_{0}\right),W^{1,p}\left(\omega_{1}\right)\right]_{\theta}}\leq\left\Vert \cdot\right\Vert _{\left[W_{0}^{1,p}\left(\omega_{0}\right),W_{0}^{1,p}\left(\omega_{1}\right)\right]_{\theta}}.$$

According to Theorem \ref{thm:easy_inclusion}, $g\in W^{1,p}\left(\omega_{\theta}\right)$, and \
\begin{align*}
\left\Vert \phi_{n}-g\right\Vert _{W^{1,p}\left(\omega_{\theta}\right)} & \leq\left\Vert \phi_{n}-g\right\Vert _{\left[W^{1,p}\left(\omega_{0}\right),W^{1,p}\left(\omega_{1}\right)\right]_{\theta}}\\
 & \le\left\Vert \phi_{n}-g\right\Vert _{\left[W_{0}^{1,p}\left(\omega_{0}\right),W_{0}^{1,p}\left(\omega_{1}\right)\right]_{\theta}}\underset{n\rightarrow\infty}{\longrightarrow}0.
\end{align*}
Therefore $\left\{ \phi_{n}\right\} _{n\in\mathbb{N}}$ converges to $g$ in $W^{1,p}\left(\omega_{\theta}\right)$. However, since $\left\{ \phi_{n}\right\} _{n\in\mathbb{N}}$ converges to $\phi$ in $\mathcal{W}_{0}^{p}\left(\theta,r_{\omega}\right)$ it must also converge to $\phi$ in $W^{1,p}\left(\omega_{\theta}\right)$. As $W^{1,p}\left(\omega_{\theta}\right)$ is a Banach space due to the $(\theta,p)-$admissibility of the pair $\left(\omega_{0},\omega_{1}\right)$, we conclude that $g=\phi$. 
Thus $\phi$ is an element of $\left[W_{0}^{1,p}\left(\omega_{0}\right),W_{0}^{1,p}\left(\omega_{1}\right)\right]_{\theta}$ and $\left\{ \phi_{n}\right\} _{n\in\mathbb{N}}$ converges to it in the norm of this space.\\
To complete this proof it remains only to observe that inequality \eqref{eq:difficult_inclusion_norm} follows immediately from the facts that 
$$ \lim_{n\rightarrow\infty}\left\Vert \phi_{n}\right\Vert _{\left[W_{0}^{1,p}\left(\omega_{0}\right),W_{0}^{1,p}\left(\omega_{1}\right)\right]_{\theta}}=\left\Vert \phi\right\Vert _{\left[W_{0}^{1,p}\left(\omega_{0}\right),W_{0}^{1,p}\left(\omega_{1}\right)\right]_{\theta}}$$
and
$$\lim_{n\rightarrow\infty}\left\Vert \phi_{n}\right\Vert _{\mathcal{W}^{p}(\theta,r_{\omega})}=\left\Vert \phi\right\Vert _{\mathcal{W}^{p}(\theta,r_{\omega})},$$
together with \eqref{eq:MoreProp}.
\end{proof}
\section{Approximation theorems}\label{sec:approximation}
Theorems  \ref{thm:easy_inclusion} and \ref{thm:difficult_inclusion_on_W_0} are the principal ingredients that we require for proving our main theorem, Theorem \ref{thm:main},
as well as for most of the proof of Theorem \ref{thm:main_for_U_including_R_d}.
As further preparation for these tasks, we require some approximation results that will allow us to show that the conditions imposed in Theorems \ref{thm:main} and  \ref{thm:main_for_U_including_R_d}  in fact also suffice to ensure that $Lip_c\pa{\R^d}$ and even also $C_c^\infty\pa{\R^d}$ are dense in $W^{1,p}\pa{\R^d,\omega}$ and also in $\W^p\pa{\R^d,\theta,\rw}$.\\
These approximation results are the focus of this section, with the following as our main goal:
\begin{theorem}\label{thm:W_p_is_W_p_0}
Let $\omega_0$ and $\omega_1$ be weight functions that satisfy the compact boundedness condition on $\R^d$. Assume in addition that the function $\rw(x)$ defined in \eqref{eq:rw} is locally Lipschitz on $\R^d$. Then, for each $p\in[1,\infty)$,
\begin{equation}\label{eq:W_p_is_Etc-1}
W^{1,p}\left(\mathbb{R}^{d},\omega_{j}\right)=W_{0}^{1,p}\left(\mathbb{R}^{d},\omega_{j}\right)\text{ for }\;j\in\left\{ 0,1\right\} 
\end{equation}
and
\begin{equation}\label{eq:W_p_is_Etc-2}
\mathcal{W}^{p}\left(\mathbb{R}^{d},\theta,r_{\omega}\right)=\mathcal{W}_{0}^{p}\left(\mathbb{R}^{d},\theta,r_{\omega}\right)\text{ for each }\;\theta\in(0,1).
\end{equation}
\end{theorem}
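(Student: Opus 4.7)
The plan is to carry out the classical truncate-then-mollify approximation, but carefully accounting for the weighted nature of the spaces and for the extra seminorm appearing in $\mathcal{W}^p$. Since both statements are density statements on $\R^d$, we need only show that every element of $W^{1,p}(\R^d,\omega)$ (for $\omega\in\{\omega_0,\omega_1\}$) and of $\mathcal{W}^p(\R^d,\theta,r_\omega)$ can be approximated in the appropriate norm by $Lip_c(\R^d)$ (or even $C_c^\infty(\R^d)$) functions.

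First I would fix a sequence of cutoff functions $\eta_n \in C_c^\infty(\R^d)$ with $0\le \eta_n\le 1$, $\eta_n \equiv 1$ on $B(0,n)$, $\mathrm{supp}\,\eta_n \subset B(0,2n)$, and $\|\nabla \eta_n\|_\infty \le C/n$. For $\phi \in W^{1,p}(\R^d,\omega)$ I would show $\phi \eta_n \to \phi$ in $W^{1,p}(\R^d,\omega)$: the convergence $\phi\eta_n\to\phi$ in $L^p(\R^d,\omega)$ is immediate from dominated convergence since $|\phi\eta_n|^p\omega \le |\phi|^p\omega \in L^1$; for the gradient, write $\nabla(\phi\eta_n) = \eta_n\nabla\phi + \phi\nabla\eta_n$, note that $\eta_n\nabla\phi \to \nabla\phi$ in $L^p(\R^d,\omega,\C^d)$ by dominated convergence, and $\int |\phi\nabla\eta_n|^p\omega \le (C/n)^p \int|\phi|^p\omega \to 0$. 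Exactly the same truncation works in $\mathcal{W}^p(\R^d,\theta,r_\omega)$: the extra term $\int|\phi\eta_n - \phi|^p|\nabla\log r_\omega|^p \omega_\theta$ tends to zero by dominated convergence, controlled by $\int|\phi|^p|\nabla\log r_\omega|^p\omega_\theta < \infty$.

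Next I would mollify the truncated function $\psi := \phi\eta_n$, which is compactly supported, say in a compact set $K$. Let $\rho_\varepsilon$ be a standard mollifier and set $\psi_\varepsilon = \psi * \rho_\varepsilon$; for $\varepsilon$ small enough, $\mathrm{supp}\,\psi_\varepsilon \subset K'$ where $K'$ is a fixed compact neighborhood of $K$. By the compact boundedness condition \eqref{eq:compact_boundedness}, there exist constants $0 < m \le M < \infty$ with $m \le \omega \le M$ on $K'$ for $\omega \in \{\omega_0,\omega_1,\omega_\theta\}$. Standard mollifier theory then gives $\psi_\varepsilon \to \psi$ in the unweighted $W^{1,p}(K')$, and the two-sided bound on $\omega$ upgrades this to convergence in $W^{1,p}(\R^d,\omega)$. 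Moreover, since $r_\omega$ is locally Lipschitz on $\R^d$, the function $|\nabla\log r_\omega|$ is bounded on $K'$ by Rademacher's theorem and Remark \ref{rem:UtterlyObvious}, so the extra seminorm contribution $\int_{K'}|\psi_\varepsilon - \psi|^p|\nabla\log r_\omega|^p\omega_\theta$ is dominated by a constant times $\|\psi_\varepsilon-\psi\|_{L^p(K')}^p$ and also tends to zero. Since each $\psi_\varepsilon \in C_c^\infty(\R^d) \subset Lip_c(\R^d)$, combining truncation with mollification via a diagonal argument produces the required approximating sequence in $Lip_c(\R^d)$.

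The main obstacle, and the reason the hypotheses of the theorem are needed, is precisely the control of the extra seminorm $\|\phi|\nabla\log r_\omega|\|_{L^p(\omega_\theta)}$ when mollifying: without the local Lipschitz assumption on $r_\omega$, the factor $|\nabla\log r_\omega|$ could blow up and destroy the convergence $\psi_\varepsilon \to \psi$ in $\mathcal{W}^p$. The local Lipschitz hypothesis is exactly what is needed to make $|\nabla\log r_\omega|$ locally bounded, and the compact boundedness condition on $\omega_0,\omega_1$ is what upgrades unweighted local convergence of mollifications to the weighted convergence required. Together, these two assumptions reduce the weighted problem on each compact set to a routine unweighted one, and the argument goes through uniformly in $\theta \in (0,1)$ and $p\in [1,\infty)$, yielding both \eqref{eq:W_p_is_Etc-1} and \eqref{eq:W_p_is_Etc-2}.
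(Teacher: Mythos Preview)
Your proposal is correct and takes essentially the same approach as the paper: truncate by compactly supported cutoffs (the paper uses piecewise linear $\xi_n$, you use smooth $\eta_n$, which is an inessential difference) and then mollify, using the compact boundedness condition to pass between weighted and unweighted convergence on compact sets and the local Lipschitz hypothesis on $r_\omega$ to bound $|\nabla\log r_\omega|$ there. The paper simply packages the same two steps into general lemmas (Lemmas \ref{lem:compact_apporximation_general}--\ref{lem:multiple_compact_apporimation} and Theorem \ref{thm:multiple_compact_smooth_apporimation}) that handle several weights at once, but the substance of the argument is identical to yours.
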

The proof of Theorem \ref{thm:W_p_is_W_p_0} relies on several approximation results, which we will establish via a series of lemmas. We start by showing that the space $W^{1,p}\pa{U,\omega}$ has two important properties whenever $\omega$ satisfies the compact boundedness condition.
\begin{lemma}\label{lem:banach_space_and_loc}
Let $U$ be an open subset of $\mathbb{R}^{d}$, let $p\in[1.\infty)$ and let $\omega$ be a weight function which satisfies the compact boundedness condition
on $U$. Then
\begin{enumerate}[(i)]
\item\label{item:W_1p_is_banach} $W^{1,p}\left(U,\omega\right)$ is a Banach space,\\
and
\item\label{item:W_1p_in_loc} $W^{1,p}\left(U,\omega\right)$ is contained in $W_{loc}^{1,p}\left(U\right)$.
\end{enumerate}
In fact these two properties also hold if our hypothesis on the weight function $\omega$ is replaced by the weaker requirement that
\begin{equation}\label{eq:HalfCBC}
\inf_{x\in K}\omega(x)>0\,\text{ for every compact subset }K\text{ of }\;U.
\end{equation}
Furthermore, in order to obtain that \eqref{item:W_1p_is_banach} holds, it suffices to assume that $\omega$ only satisfies an even weaker condition, namely that 
\begin{equation}\label{eq:KO-Property}
\frac{1}{\omega^{1/p}}\chi_{K}\in L^{p'}(U)\ \text{ for every compact subset }\;K\;\text{ of }\;U.
\end{equation}
where, as usual, $1/p+1/p'=1$. 
\end{lemma}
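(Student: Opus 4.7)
My plan is to exploit the hierarchy among the three hypotheses on $\omega$ and reduce everything to a single H\"older-type embedding. First I would observe that the compact boundedness condition implies $m(K,\omega)=\inf_{x\in K}\omega(x)>0$ for every compact $K\subset U$, and that \eqref{eq:HalfCBC} in turn implies \eqref{eq:KO-Property}: indeed, if $m(K,\omega)>0$ then $\chi_K/\omega^{1/p}\le m(K,\omega)^{-1/p}\chi_K$, and $\chi_K\in L^{p'}(U)$ because $K$ has finite Lebesgue measure. It therefore suffices to establish \eqref{item:W_1p_in_loc} under the assumption \eqref{eq:HalfCBC} and \eqref{item:W_1p_is_banach} under the still weaker assumption \eqref{eq:KO-Property}.

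For part \eqref{item:W_1p_in_loc} under \eqref{eq:HalfCBC}, the argument is direct: for any $\phi\in W^{1,p}(U,\omega)$ and any compact $K\subset U$,
$$\int_K\bigl(|\phi(x)|^p+|\nabla\phi(x)|^p\bigr)\,dx\le\frac{1}{m(K,\omega)}\int_U\bigl(|\phi(x)|^p+|\nabla\phi(x)|^p\bigr)\omega(x)\,dx<\infty,$$
so $\chi_K\phi\in L^p(U)$ and $\chi_K\nabla\phi\in L^p(U,\mathbb{C}^d)$, whence $\phi\in W_{loc}^{1,p}(U)$ by Definition \ref{def:LocalSobolev}.

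For part \eqref{item:W_1p_is_banach} under \eqref{eq:KO-Property}, I would begin by noting that $L^p(U,\omega,\mathbb{C}^{d+1})$ is itself complete, since it is an ordinary $L^p$ space on the measure space $(U,\omega(x)\,dx)$. Hence any Cauchy sequence $\{\phi_n\}$ in $W^{1,p}(U,\omega)$ yields $\phi\in L^p(U,\omega)$ and $v=(v_1,\dots,v_d)\in L^p(U,\omega,\mathbb{C}^d)$ with $\phi_n\to\phi$ and $\nabla\phi_n\to v$ in their respective weighted norms. The heart of the argument is to identify $v$ as the weak gradient of $\phi$. By H\"older's inequality, for any $f\in L^p(U,\omega)$ and any compact $K\subset U$,
$$\int_K|f(x)|\,dx=\int_U|f(x)|\omega(x)^{1/p}\cdot\chi_K(x)\omega(x)^{-1/p}\,dx\le\|f\|_{L^p(U,\omega)}\bigl\|\chi_K\omega^{-1/p}\bigr\|_{L^{p'}(U)},$$
so \eqref{eq:KO-Property} provides a continuous embedding $L^p(U,\omega)\hookrightarrow L^1_{loc}(U)$. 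Applying this with $K=\operatorname{supp}\psi$ for arbitrary $\psi\in C_c^\infty(U)$ gives $\phi_n\to\phi$ and $\nabla\phi_n\to v$ in $L^1(K)$, which lets me pass to the limit in $\int_U\phi_n\nabla\psi\,dx=-\int_U\psi\nabla\phi_n\,dx$ to obtain $\int_U\phi\nabla\psi\,dx=-\int_U\psi v\,dx$. Hence $v=\nabla\phi$ in the sense of distributions, $\phi\in W^{1,p}(U,\omega)$, and $\phi_n\to\phi$ in its norm.

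The only delicate point, which is not really an obstacle, is the identification of $v$ with the weak gradient of $\phi$ in the last step; the $L^1_{loc}$ embedding granted by \eqref{eq:KO-Property} is precisely what makes this identification routine, since it allows us to pass to the limit against compactly supported test functions without requiring any auxiliary uniform bounds. The reduction laid out in the first paragraph then packages the three versions of the lemma into one computation apiece.
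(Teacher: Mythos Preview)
Your proof is correct and follows essentially the same route as the paper: both use H\"older's inequality with the factor $\chi_K\omega^{-1/p}\in L^{p'}(U)$ to obtain the embedding $L^p(U,\omega)\hookrightarrow L^1_{loc}(U)$, which allows passage to the limit in the weak-derivative identity, and both handle part \eqref{item:W_1p_in_loc} by the direct estimate against $m(K,\omega)^{-1}$. Your explicit observation that the three hypotheses form a hierarchy (compact boundedness $\Rightarrow$ \eqref{eq:HalfCBC} $\Rightarrow$ \eqref{eq:KO-Property}) is a tidy organizational point that the paper leaves implicit.
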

\begin{remark}\label{rem:When_p_is_infiniteZZ}
For $p=\infty$ the space $W^{1,\infty}(U,\omega)$ has both of the properties \eqref{item:W_1p_is_banach} and \eqref{item:W_1p_in_loc} in the statement of Lemma \ref{lem:banach_space_and_loc} for \textit{all} choices of the weight function $\omega$. This is simply because, as already pointed out in Remark \ref{rem:Unweighted}, $W^{1,\infty}(U,\omega)$ coincides isometrically with the unweighted space $W^{1,\infty}(U)$, for which \eqref{item:W_1p_in_loc} is a triviality and \eqref{item:W_1p_is_banach} is a standard result. (Cf. e.g., Theorem 2 of \cite[p. 249]{Evans}.)
\end{remark}
\begin{remark}\label{rem:CompareWithKOZZ}  
In  fact, one can also deduce that $W^{1,p}\left(\omega\right)$ is a Banach space for $p\in(1,\infty)$ from Theorem 1.11 of \cite{KO}.
For $p>1$ the condition (\ref{eq:KO-Property}) is just a reformulation of the condition $\omega\in B_{p}(\Omega)$ introduced in Definition 1.4 on page 538 of \cite{KO} with $\Omega$ in the role of our set $U$. So part \eqref{item:W_1p_is_banach} of this lemma and our proof of it are very closely related to the statement and proof of Theorem 1.11 of \cite[pp. 540--541]{KO}. Here, unlike in that theorem, we also deal with the case where $p=1$ and we can observe that the condition \eqref{eq:KO-Property}, when $p=1$, would be the natural substitute for the condition $\omega\in B_{p}(\Omega)$ of \cite{KO} and in fact is exactly the same as \eqref{eq:HalfCBC}.
\end{remark}

\begin{proof}[Proof of Lemma \ref{lem:banach_space_and_loc}]
One should keep in mind that we are adopting the frequently used convention that, for each point $a$ of $\mathbb{C}^{d}$, $\left|a\right|$ denotes the $\ell^{2}$ norm of $a$. Since the $\ell^{2}$ and $\ell^{p}$ norms are equivalent on $\mathbb{C}^{d}$ we have, for every $\bm{f}\in L^{p}(U,\omega,\mathbb{C}^{d})$, that 
\begin{equation}
r\left\Vert \bm{f}\right\Vert _{L^{p}(U,\omega,\mathbb{C}^{d})}\le\left\Vert \left|\bm{f}\right|\right\Vert _{L^{p}(U,\omega)}=\left(\int_{U}\left|\bm{f}(x)\right|^{p}\omega(x)dx\right)^{1/p}\le R\left\Vert \bm{f}\right\Vert _{L^{p}(U,\omega,\mathbb{C}^{d})}\label{eq:ellPell2}
\end{equation}
 for some constants $r$ and $R$ depending only on $d$ and $p$.\\
Using H\"older's inequality and then (\ref{eq:ellPell2}), for each $\bm{f}\in L^{p}\pa{U,\omega,\mathbb{C}^{d}}$, for each compact subset $K$ of $U$ and for each $g\in L^{\infty}(U)$ which vanishes on $U\setminus K$ we obtain that 
\begin{align}
\left|\int_{U}\bm{f}(x)g(x)dx\right| & \le\int_{U}\left|\bm{f}(x)\right|\left|g(x)\right|dx\le\left\Vert g\right\Vert _{L^{\infty}(U)}\int_{U}\left|\bm{f}(x)\right|\chi_{K}(x)dx\nonumber \\
 & \le\left\Vert g\right\Vert _{L^{\infty}(U)}\left\Vert \left|\bm{f}\right|\omega^{1/p}\right\Vert _{L^{p}(U)}\left\Vert \frac{\chi_{K}}{\omega^{1/p}}\right\Vert _{L^{p'}(U)}\nonumber \\
 & =\left\Vert g\right\Vert _{L^{\infty}(U)}\left\Vert \left|\bm{f}\right|\right\Vert _{L^{p}(U,\omega)}\left\Vert \frac{\chi_{K}}{\omega^{1/p}}\right\Vert _{L^{p'}(U)}\nonumber \\
 & \le R\left\Vert g\right\Vert _{L^{\infty}(U)}\left\Vert \bm{f}\right\Vert _{L^{p}(U,\omega,\mathbb{C}^{d})}\left\Vert \frac{\chi_{K}}{\omega^{1/p}}\right\Vert _{L^{p'}(U)}\label{eq:HolderAA}
\end{align}
Then, by quite similar reasoning, for $K$ as above, but when the function $\bm{f}$ is in $L^{\infty}\pa{U,\mathbb{C}^{d}}$ and vanishes on $U\setminus K$ and when $g$ is an arbitrary element of $L^{p}\pa{U,\omega}$, we have that 
\begin{align}
\left|\int_{U}\bm{f}(x)g(x)dx\right| & \le\int_{U}\left|\bm{f}(x)\right|\left|g(x)\right|dx\le\sqrt{d}\left\Vert \bm{f}\right\Vert _{L^{\infty}(U,\omega)}\int_{U}\left|g(x)\right|\chi_{K}(x)dx\nonumber \\
 & \le\sqrt{d}\left\Vert \bm{f}\right\Vert _{L^{\infty}(U,\omega)}\left\Vert g\right\Vert _{L^{p}(U,\omega)}\left\Vert \frac{\chi_{K}}{\omega^{1/p}}\right\Vert _{L^{p'}(U)}.\label{eq:HolderB}
\end{align}
After these preparations we turn to showing that \eqref{item:W_1p_is_banach} holds: Since $\omega$ is measurable and strictly positive and since $\left\Vert \phi\right\Vert _{L^{p}(U,\omega)}\le\left\Vert \phi\right\Vert _{W^{1,p}(U,\omega)}$, it is obvious that $\left\Vert \cdot\right\Vert _{W^{1,p}\left(\omega\right)}$
is a norm. Now let $\left\{ \phi_{n}\right\} _{n\in\mathbb{N}}$ be an arbitrary Cauchy sequence in $W^{1,p}(U,\omega)$. This obviously implies that $\left\{ \phi_{n}\right\} _{n\in\mathbb{N}}$ is also a Cauchy sequence in $L^{p}(U,\omega)$ and that $\left\{ \nabla\phi_{n}\right\} _{n\in\mathbb{N}}$
is a Cauchy sequence in $L^{p}(U,\omega,\mathbb{C}^{d})$. Since $L^{p}(U,\omega)$ and $L^{p}(U,\omega,\mathbb{C}^{d})$ are both Banach spaces, these sequences $\left\{ \phi_{n}\right\} _{n\in\mathbb{N}}$ and $\left\{ \nabla\phi_{n}\right\} _{n\in\mathbb{N}}$ converge respectively, in the norms of these spaces, to an element
$\phi\in L^{p}(U,\omega)$ and an element $\bm{\xi}\in L^{p}(U,\omega,\mathbb{C}^{d})$. Therefore, in order to show that $W^{1,p}(U,\omega)$ is complete, it remains only to show that $\bm{\xi}$ is the weak gradient of $\phi$ on $U$.\\
Let $\psi$ be an arbitrary function in $C_{c}^{\infty}\left(U\right)$ and let $K_{\psi}$ denote a compact subset of $U$ on which $\psi$ is supported. We claim that
\begin{equation}
\lim_{n\to\infty}\int_{U}\psi(x)\left(\nabla\phi_{n}(x)-\bm{\xi}(x)\right)dx=0.\label{eq:02-Lim}
\end{equation}
and also that 
\begin{equation}
\lim_{n\to\infty}\int_{U}\nabla\psi(x)\left(\phi(x)-\phi_{n}(x)\right)dx=0\label{eq:01-Lim}
\end{equation}
We shall obtain these two formulae with the help of the inequalities (\ref{eq:HolderAA}) and (\ref{eq:HolderB}). We shall choose $K$ in both of these inequalities to be $K_{\psi}$. If we require $\omega$ to satisfy the compact boundedness condition or if we merely make do with requiring one of the weaker variants (\ref{eq:HalfCBC}) and (\ref{eq:KO-Property}) of this condition, we will obtain that the quantity $\left\Vert \frac{\chi_{K_{\psi}}}{\omega^{1/p}}\right\Vert _{L^{p'}(U)}$ is finite. We use this fact and substitute $\bm{f}(x)=\nabla\phi_{n}(x)-\bm{\xi}(x)$ and $g(x)=\psi(x)$ in (\ref{eq:HolderAA}). Since $\lim_{n\to\infty}\left\Vert \nabla\phi_{n}-\bm{\xi}\right\Vert _{L^{p}\pa{U,\omega,\mathbb{C}^{d}}}=0$, this gives us (\ref{eq:02-Lim}). Then we substitute $\bm{f}(x)=\nabla\psi(x)$ and $g(x)=\phi(x)-\phi_{n}(x)$ in (\ref{eq:HolderB}). Since $\lim_{n\to\infty}\left\Vert \phi-\phi_{n}\right\Vert _{L^{p}\pa{U,\omega}}=0$, this gives us (\ref{eq:01-Lim}). \\
Using (\ref{eq:02-Lim}) and then (\ref{eq:01-Lim}), we see that 
\begin{align*}
\int_{U}\nabla\psi(x)\phi(x)dx & =\lim_{n\rightarrow\infty}\int_{U}\nabla\psi(x)\phi_{n}(x)dx\\
 & =-\lim_{n\rightarrow\infty}\int_{U}\psi(x)\nabla\phi_{n}(x)dx\\
 & =-\int_{U}\psi(x)\bm{\xi}(x)dx
\end{align*}
for each $\psi\in C_{c}^{\infty}(U)$. This shows that $\bm{\xi}$ is indeed the weak gradient of $\phi$ on $U$ and completes the proof of part \eqref{item:W_1p_is_banach} of the lemma, also when $\omega$ only satisfies (\ref{eq:HalfCBC}) or (\ref{eq:KO-Property}).\\
We now turn our attention to part \eqref{item:W_1p_in_loc}. Here our hypothesis on $\omega$ is either that it satisfies the compact boundedness condition, or merely one ``half'' of that condition, namely (\ref{eq:HalfCBC}). \\ 
For each compact subset $K$ of $U$ and each $\phi\in W^{1,p}(U,\omega)$ we have, recalling the notation introduced in (\ref{eq:Define_m_and_M}), that 
\begin{align*}
\left\Vert \chi_{K}\phi\right\Vert _{L^{p}(U)}^{p}+\left\Vert \chi_{K}\nabla\phi\right\Vert _{L^{p}\pa{U,\mathbb{C}^{d}}}^{p} & \le\frac{1}{m(K,\omega)^{p}}\left(\left\Vert \chi_{K}\phi\right\Vert _{L^{p}\pa{U,\omega}}^{p}+\left\Vert \chi_{K}\nabla\phi\right\Vert _{L^{p}\pa{U,\omega,\mathbb{C}^{d}}}^{p}\right)\\
 & \le\frac{1}{m(K,\omega)^{p}}\left(\left\Vert \phi\right\Vert _{L^{p}\pa{U,\omega}}^{p}+\left\Vert \nabla\phi\right\Vert _{L^{p}\pa{U,\omega,\mathbb{C}^{d}}}^{p}\right)\\
 & =\frac{1}{m(K,\omega)^{p}}\left\Vert \phi\right\Vert _{W^{1,p}\pa{U,\omega}}^{p}.
\end{align*}
This establishes the inclusion claimed in part \eqref{item:W_1p_in_loc}, and therefore completes the proof of the lemma.
\end{proof} 
\begin{remark}\label{rem:UnnecessaryHypothesesZZ}
In the formulations of several of the following results of this section we will find it convenient to require the relevant weight function to satisfy the compact boundedness condition, even though, just as for the preceding lemma, some of these results hold under weaker hypotheses. 
\end{remark}
\begin{lemma}\label{lem:lip_in_W_p}
Let $\omega$ be a weight function that satisfies the compact boundedness condition on an open set $U$. Then $Lip_c\pa{U} \subset W^{1,p}\pa{U,\omega}$ for each $p\in[1,\infty]$.
\end{lemma}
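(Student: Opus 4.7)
The plan is to verify the two conditions appearing in Definition \ref{def:W^p_wZZ}: namely, that $\phi\in Lip_c(U)$ has a weak gradient, and that $(\phi,\nabla\phi)\in L^p(U,\omega,\mathbb{C}^{d+1})$. The existence and nature of the weak gradient follow directly from Rademacher's theorem (Theorem \ref{thm:rademacher}), so the real content is the weighted integrability, which will come cheaply from the compact support of $\phi$ together with the upper half of the compact boundedness condition.

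First I would let $K$ be a compact subset of $U$ which contains the support of $\phi$, chosen so that the closure of $\{x\in U:\phi(x)\ne 0\}$ is contained in $K$ (this is possible by the very definition of $Lip_c(U)$). Since $\phi$ is Lipschitz on $U$, part \eqref{item:lip_iff_W_1infty} of Theorem \ref{thm:rademacher} gives $\phi\in W^{1,\infty}_{loc}(U)$, so in particular $\phi$ has a weak gradient $\nabla\phi$ on $U$. Part \eqref{item:diff_ae_and_weak_derivative} of the same theorem, together with Remark \ref{rem:UtterlyObvious}, gives that this weak gradient is represented almost everywhere by the pointwise gradient, which is bounded by the (global) Lipschitz constant $L$ of $\phi$. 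Also, $\nabla\phi$ vanishes a.e.\ outside $K$, since $\phi$ itself vanishes on the open set $U\setminus K$.

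Next I would use the compact boundedness condition: $M(K,\omega)=\sup_{x\in K}\omega(x)<\infty$. For $p\in[1,\infty)$ this gives
$$\|\phi\|_{W^{1,p}(U,\omega)}^p=\int_U\bigl(|\phi(x)|^p+|\nabla\phi(x)|^p\bigr)\omega(x)\,dx\le\bigl(\|\phi\|_\infty^p+L^p\bigr)M(K,\omega)\,|K|,$$
which is finite, proving $\phi\in W^{1,p}(U,\omega)$. For the remaining case $p=\infty$ I would simply invoke Remark \ref{rem:Unweighted}: $W^{1,\infty}(U,\omega)=W^{1,\infty}(U)$ isometrically, and membership in the latter has already been established via $W^{1,\infty}_{loc}(U)$ together with the boundedness of $\phi$ and $\nabla\phi$ on all of $U$.

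There is essentially no obstacle here; the only minor subtlety is making sure to invoke Rademacher on all of $U$ (not only on $K$), which is legitimate because a function in $Lip_c(U)$ is globally Lipschitz on $U$ (see fact \eqref{fact:LocLipschitzGoesGlobal} whose proof is deferred to the appendix, or alternatively, extending trivially by zero outside the support). One should also take care that the support condition in Definition \ref{def:W_0}---requiring the closure of the support to be a compact subset of $U$, not merely of $\mathbb{R}^d$---is exactly what guarantees that $K\subset U$ so that $M(K,\omega)<\infty$ is applicable.
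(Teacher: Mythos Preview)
Your proof is correct and follows essentially the same approach as the paper's: invoke Rademacher's theorem to obtain the weak gradient and its boundedness, then use the compact support of $\phi$ together with the upper half of the compact boundedness condition to bound the weighted $L^p$ norms of $\phi$ and $\nabla\phi$. The only cosmetic differences are that the paper works directly with $\mathrm{supp}\,\phi$ rather than an auxiliary compact set $K$, and treats the case $p=\infty$ first; your additional remarks about ensuring $K\subset U$ and about global versus local Lipschitz are accurate but not strictly needed here.
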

\begin{proof}
Recalling Rademacher's theorem, Theorem \ref{thm:rademacher}, we know that if $\phi\in Lip_c\pa{U}$ then it is in $W^{1,\infty}_{loc}\pa{U}$, and therefore has a weak gradient $\nabla \phi$ on $U$. Obviously (cf. Remark \ref{rem:UtterlyObvious}) $\nabla\phi\in L^{\infty}(U,\mathbb{C}^{d})$ and $\left\Vert \nabla\phi\right\Vert _{L^{\infty}(U,\mathbb{C}^{d})}$ is bounded by the Lipschitz constant of $\phi$. Also $\phi$, like every other compactly supported Lipschitz function, is bounded. This completes the proof in the case where $p=\infty$.\\
To deal with the case $p\in[1,\infty)$, we note that $\phi$ and $\nabla \phi$ both vanish almost everywhere on $U\setminus \text{supp}\phi$, and that the Lebesgue measure of $\text{supp}\phi$ is finite. 
Then, using the notation of \eqref{eq:Define_m_and_M}, and once more the boundedness of $\phi$ we conclude that
$$\int_{U}\abs{\phi(x)}^p \omega(x)dx \leq \norm{\phi}_{L^\infty\pa{U}}^p \abs{\text{supp}\phi}M\pa{\text{supp}\phi,\omega}<\infty\footnote{Here (and also elsewhere) we use the standard notation $\abs{E}$ for the $d-$dimensional Lebesgue measure of any measurable subset $E$ of $\R^d$.}$$
and
$$\int_{U}\abs{\nabla \phi(x)}^p \omega(x)dx \leq \norm{\nabla \phi}_{L^\infty\pa{U,\C^d}}^p \abs{\text{supp}\phi}M\pa{\text{supp}\phi,\omega}<\infty.$$
This shows that $\phi\in W^{1,p}\pa{U,\omega}$ also for $p\in[1,\infty)$, completing the proof.
\end{proof}
\begin{lemma}\label{lem:product_rule} 
Let $\omega$ be a weight function that satisfies the compact boundedness condition on an open set $U$. Then, given $p\in[1,\infty)$, and functions $\phi\in W^{1,p}\left(U,\omega\right)$ and $\xi:U\rightarrow \C$ such that $\xi$ is Lipschitz on $U$, we have that their pointwise product $\phi\xi$ has a weak gradient which can be expressed in terms of the weak gradients of $\phi$ and $\xi$ by the formula 
$$
\nabla\left(\phi\xi\right)(x)=\xi(x)\nabla\phi(x)+\phi(x)\nabla\xi(x) \quad \text{for almost every }\;x\in U.
$$
\end{lemma}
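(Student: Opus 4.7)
The plan is to establish the product rule by mollifying $\phi$ on a compactly contained subdomain and then passing to a limit. The starting point is that, by part \eqref{item:W_1p_in_loc} of Lemma \ref{lem:banach_space_and_loc}, the hypothesis $\phi \in W^{1,p}(U,\omega)$ forces $\phi \in W^{1,p}_{loc}(U)$, so the standard mollification machinery is available on any open set $V$ whose closure is a compact subset of $U$.

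First I would fix an arbitrary test function $\psi \in C_c^\infty(U)$ and choose such an open $V$ containing $\mathrm{supp}\,\psi$. Let $\{\rho_\varepsilon\}_{\varepsilon>0}$ be a standard family of mollifiers and set $\phi_\varepsilon = \phi * \rho_\varepsilon$, which is well-defined and smooth on $V$ for all sufficiently small $\varepsilon>0$. The usual mollification estimates provide $\phi_\varepsilon \to \phi$ in $L^p(V)$ and $\nabla \phi_\varepsilon \to \nabla\phi$ in $L^p(V,\C^d)$ as $\varepsilon \to 0$. Meanwhile, because $\xi$ is Lipschitz on $U$, Theorem \ref{thm:rademacher} gives $\xi \in W^{1,\infty}_{loc}(U)$, with its pointwise gradient coinciding almost everywhere with its weak gradient and essentially bounded by the Lipschitz constant of $\xi$.

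For each fixed such $\varepsilon$ the product $\phi_\varepsilon\xi$ is the pointwise product of a $C^\infty$ function and a Lipschitz function, hence locally Lipschitz on $V$. The classical product rule applied pointwise at those points where $\xi$ is differentiable gives
\begin{equation*}
\nabla(\phi_\varepsilon\xi)(x) = \xi(x)\nabla\phi_\varepsilon(x) + \phi_\varepsilon(x)\nabla\xi(x) \quad \text{for almost every } x \in V,
\end{equation*}
and this pointwise gradient simultaneously serves as the weak gradient of $\phi_\varepsilon\xi$, by another invocation of Theorem \ref{thm:rademacher}. Substituting into the defining identity \eqref{eq:WeakDeriv} for weak gradients tested against $\psi$ then yields
\begin{equation*}
-\int_U \phi_\varepsilon(x)\xi(x)\nabla\psi(x)\, dx = \int_U \psi(x)\bigl(\xi(x)\nabla\phi_\varepsilon(x) + \phi_\varepsilon(x)\nabla\xi(x)\bigr)\, dx.
\end{equation*}

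Finally I would pass to the limit $\varepsilon \to 0$. Since the integrals are supported in $\mathrm{supp}\,\psi \subset V$, on which $\xi$ is bounded (being Lipschitz on $\overline V$) and $\nabla\xi$ is essentially bounded, H\"older's inequality combined with the $L^p(V)$-convergence of $\phi_\varepsilon$ and $\nabla\phi_\varepsilon$ ensures that each of the three integrals converges to its counterpart with $\phi$ in place of $\phi_\varepsilon$. As $\psi \in C_c^\infty(U)$ was arbitrary, this establishes that $\xi\nabla\phi + \phi\nabla\xi$ is indeed the weak gradient of $\phi\xi$ on $U$. The main technical subtlety lies in the pre-limit product rule step: because $\xi$ is only Lipschitz and not $C^1$, the calculus product rule cannot be invoked verbatim, and one must instead lean on Rademacher's theorem to identify pointwise and weak gradients of the locally Lipschitz product $\phi_\varepsilon\xi$. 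Once that identification is in place, the limit passage is routine.
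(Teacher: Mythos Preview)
Your argument is correct, but it takes a different route from the paper's. The paper mollifies the Lipschitz function $\xi$ rather than $\phi$: it picks smooth approximants $\xi_n$ converging to $\xi$ in $W^{1,q}(V)$ (with $q$ the H\"older conjugate of $p$), observes that $\psi\xi_n\in C_c^\infty(U)$ is an admissible test function for the weak gradient of $\phi$, and applies the \emph{classical} product rule for smooth functions to $\psi\xi_n$ before passing to the limit via H\"older's inequality. Your approach instead mollifies $\phi$, which means that at the pre-limit stage you must justify the product rule for the smooth~$\times$~Lipschitz product $\phi_\varepsilon\xi$; you do this correctly by invoking Rademacher's theorem on the locally Lipschitz product, but this is an extra step the paper sidesteps entirely. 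The trade-off is that the paper's route keeps the product-rule computation in the purely smooth setting, while your route is arguably the more instinctive one (approximate the rougher factor) and uses only $L^\infty$ bounds on $\xi$ and $\nabla\xi$ rather than $L^q$ convergence of approximants.
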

\begin{proof}
Much as in the proof of the previous lemma, we know, due to Rademacher's Theorem and Remark \ref{rem:UtterlyObvious}, that $\xi\in W^{1,\infty}_{loc}\pa{U}$ (and in fact $\xi\in W^{1,\infty}\pa{U}$), and consequently $\xi\in W^{1,r}_{loc}\pa{U}$ for every $r\geq 1$. We also note that, due to Lemma \ref{lem:banach_space_and_loc}, $\phi\in W^{1,p}_{loc}\pa{U}$. Let $\psi$ be an arbitrary function in $C_c^\infty\pa{U}$ and let $K_\psi=\text{supp}\psi$. Standard approximation theorems which are used in the theory of Sobolev spaces (see for example Theorem 1 of \cite[pp. 250-251]{Evans} and its proof, which is based on Theorem 6 of \cite[pp. 630-631]{Evans}) enable us to find an open set $V$ containing $K_\psi$, whose closure is compact and contained in $U$, and functions $\xi_n:U\rightarrow\C$ whose restrictions to $V$ are in $C^\infty\pa{V}$ and which satisfy
\begin{equation}\label{eq:NiceXi}
\lim_{n\to\infty}\int_{V}\left|\xi_{n}(x)-\xi(x)\right|^{q}dx=0
\end{equation}
 and 
\begin{equation}\label{eq:NiceNablaXi}
\lim_{n\to\infty}\int_{V}\left|\nabla\xi_{n}(x)-\nabla\xi(x)\right|^{q}dx=0.
\end{equation}
where $q$ is the H\"older conjugate of $p$. Since $\psi(x)\xi_n(x)\in C_c^\infty\pa{U}$ for each $n\in\N$ we see that
$$\int_{U} \phi(x)\xi_n(x)\nabla \psi(x)dx = \int_{U}\phi(x) \nabla \pa{\psi(x) \xi_n(x)}dx - \int_{U}\phi(x)\psi(x)\nabla \xi_n(x)dx$$
$$=-\int_{U}\psi(x)\pa{\xi_n(x)\nabla \phi(x)+\phi(x)\nabla \xi_n(x) }dx.$$
We recall that $\psi$ and $\nabla\psi$ both vanish on $U\setminus V$. Therefore, in all the integrals in the preceding calculation, we can replace the domain of integration $U$ by $V$. Since $\phi\in W^{1,p}_{loc}\pa{U}$ (due to \eqref{item:W_1p_in_loc} of Lemma \ref{lem:banach_space_and_loc}), we also observe that $\phi\nabla\psi$ and $\psi\nabla\phi$ are both elements of $L^{p}(U,\mathbb{C}^{d})$ and $\psi\phi\in L^{p}(U)$.
So we can use \eqref{eq:NiceXi}, \eqref{eq:NiceNablaXi} and H\"older's inequality to pass to the limit in the preceding calculation and thus to obtain that
$$\int_{U}\phi(x)\xi(x)\nabla \psi(x) dx =-\int_{U}\psi(x)\pa{\xi(x)\nabla \phi(x)+\phi(x)\nabla \xi(x)}dx.$$
Since this holds for every $\psi\in C^\infty_c\pa{U}$, the proof is complete.
\end{proof}
From this point onward, till the end of this section, we will only deal with the case where $U=\R^d$, and, accordingly, we shall usually remove any mention of the underlying set (namely $\R^d$) from our notation.\\
Note that in the following lemma we will be permitting a function $\rho$ which seems to be playing the role of a weight function to possibly also take the value $0$. So, analogously to the conventions adopted in Definition \ref{def:SemiNorm} and in \eqref{eq:SemiNorm}, it may happen that $L^{p}(\rho)$ and $\left\Vert \cdot\right\Vert _{L^{p}(\rho)}$ have to be understood to be, respectively, a semi-normed space and the semi-norm on that space. 
\begin{lemma}\label{lem:compact_apporximation_general}
Let $\rho:\mathbb{R}^{d}\to[0,\infty)$ and $\phi:\mathbb{R}^{d}\to\mathbb{C}$ be arbitrary measurable functions which satisfy 
$$\int_{\R^d}\abs{\phi(x)}^p \rho(x)dx <\infty,$$
for some $p\in [1,\infty)$. For each $n\in\N$ define the function $\xi_n:\R^d\rightarrow[0,\infty)$ by 
\begin{equation}\label{eq:xi_n}
\xi_n(x) = \begin{cases}
1 & \abs{x}<n \\
2-\frac{\abs{x}}{n} & n<\abs{x} < 2n \\
0 & \abs{x}\geq 2n
\end{cases}
\end{equation}
Then, 
\begin{enumerate}[(i)]
\item\label{item:xi_is_lip} $\xi_{n}\in Lip_{c}\left(\mathbb{R}^{d}\right)$ with Lipschitz constant $\frac{1}{n}$, and at almost every point $x\in\mathbb{R}^{d}$ the $d$-dimensional $\ell^{p}$norm of its weak gradient satisfies 
\begin{equation}\label{eq:ptwiseGradient-EllP_estimate}
\left\Vert \nabla\xi{}_{n}(x)\right\Vert _{\ell^{p}}\le\frac{d^{\frac{1}{p}}}{n}.
\end{equation}
\item\label{item:phi_times_xi_approximates_phi} $\phi\xi_{n}\in L^{p}\left(\rho\right)$ and 
\begin{equation}\label{eq:compact_approximation_L_p}
\int_{\mathbb{R}^{d}}\left\vert \phi(x)\xi_{n}(x)-\phi(x)\right\vert ^{p}\rho(x)dx\underset{n\rightarrow\infty}{\longrightarrow}0.
\end{equation}
\end{enumerate}
\end{lemma}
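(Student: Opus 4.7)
The plan is to split the argument according to parts \eqref{item:xi_is_lip} and \eqref{item:phi_times_xi_approximates_phi}, both of which reduce to standard facts.

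For part \eqref{item:xi_is_lip}, I would note that $\xi_n$ is the composition $\xi_n(x) = h_n(|x|)$ where $h_n:[0,\infty)\to[0,1]$ is given by $h_n(t) = 1$ on $[0,n]$, $h_n(t) = 2-t/n$ on $[n,2n]$ and $h_n(t)=0$ on $[2n,\infty)$. The function $h_n$ is piecewise affine with slopes in $\{0,-1/n\}$, so $h_n$ is Lipschitz on $[0,\infty)$ with constant $1/n$. Since $t \mapsto |x|$ is $1$-Lipschitz on $\R^d$, the composition $\xi_n$ is Lipschitz on $\R^d$ with constant $1/n$, and obviously has support in $\overline{B(0,2n)}$, so $\xi_n\in Lip_c(\R^d)$. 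By Rademacher's theorem (Theorem \ref{thm:rademacher}), $\xi_n$ is differentiable a.e., and a direct calculation shows that the pointwise gradient vanishes on $\{|x|<n\}\cup\{|x|>2n\}$ and equals $-\tfrac{1}{n}\tfrac{x}{|x|}$ on the annulus $\{n<|x|<2n\}$. Therefore each component of $\nabla \xi_n(x)$ has absolute value at most $1/n$ a.e., whence
\[
\left\Vert \nabla\xi_n(x)\right\Vert_{\ell^p} = \left(\sum_{i=1}^d |\partial_i \xi_n(x)|^p\right)^{1/p} \le \left(d\cdot n^{-p}\right)^{1/p} = \frac{d^{1/p}}{n}
\]
a.e., which is exactly \eqref{eq:ptwiseGradient-EllP_estimate}.

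For part \eqref{item:phi_times_xi_approximates_phi}, since $0\le \xi_n(x)\le 1$ for all $x$, the pointwise bound $|\phi(x)\xi_n(x)|^p \le |\phi(x)|^p$ together with the assumed integrability of $|\phi|^p\rho$ gives immediately that $\phi\xi_n\in L^p(\rho)$. For the convergence \eqref{eq:compact_approximation_L_p}, I would observe that for each fixed $x\in\R^d$ we have $\xi_n(x)=1$ for every $n>|x|$, hence $\phi(x)\xi_n(x)\to \phi(x)$ pointwise everywhere, and that
\[
|\phi(x)\xi_n(x)-\phi(x)|^p \le \bigl((1+\xi_n(x))|\phi(x)|\bigr)^p \le 2^p |\phi(x)|^p,
\]
which is an integrable majorant with respect to the measure $\rho(x)\,dx$. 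The Lebesgue dominated convergence theorem then yields \eqref{eq:compact_approximation_L_p}.

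Neither step presents any real obstacle; the only mild subtlety is the presence of the $\ell^p$ norm rather than the usual $\ell^2$ norm in \eqref{eq:ptwiseGradient-EllP_estimate}, which is why I prefer to derive the bound componentwise from the explicit formula for $\nabla\xi_n$ on the annulus instead of invoking the Lipschitz constant directly.
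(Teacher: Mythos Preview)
Your proof is correct and follows essentially the same approach as the paper: the explicit computation of $\nabla\xi_n = -\tfrac{x}{n|x|}$ on the annulus followed by the componentwise (equivalently, $\ell^\infty$-to-$\ell^p$) estimate for part \eqref{item:xi_is_lip}, and the dominated convergence theorem for part \eqref{item:phi_times_xi_approximates_phi}. The only cosmetic difference is that the paper uses the sharper majorant $|\phi(x)|^p$ (since $|1-\xi_n(x)|\le 1$) rather than your $2^p|\phi(x)|^p$, but this is immaterial.
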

\begin{proof}
For part \eqref{item:xi_is_lip} the fact that $\xi_n$ is in $Lip_c\pa{\R^d}$ with Lipschitz constant $\frac{1}{n}$ is immediate. It is also obvious that, at each point $x\in\mathbb{R}^{d}$ such that $\left|x\right|\ne n$ and $\left|x\right|\ne2n$, the pointwise gradient $\nabla\xi_{n}(x)$ exists and equals either $0$ or $-\frac{x}{n\left|x\right|}.$\\
This gives us \eqref{eq:ptwiseGradient-EllP_estimate} since every $x\in\R^d$ satisfies 
$$\left\Vert x\right\Vert _{\ell^{p}}\le d^{\frac{1}{p}}\left\Vert x\right\Vert _{\ell^{\infty}}.$$
For part \eqref{item:phi_times_xi_approximates_phi} we note that $\left\vert \phi(x)\xi_{n}(x)\right\vert \leq\left\vert \phi(x)\right\vert $ which ensures that $\phi\xi_{n}\in L^{p}(\rho)$. We also have
$$\left\vert \phi(x)\xi_{n}(x)-\phi(x)\right\vert ^{p}=\left|\phi(x)\right|^{p}\left|1-\xi_{n}(x)\right|^{p}\le\left|\phi(x)\right|^{p}$$
for all $n\in\mathbb{N}$ and for almost every $x\in\mathbb{R}^{d}$. Since $\left|\phi\right|^{p}\rho\in L^{1}$ and since $\phi(x)\xi_n(x)$ converges pointwise to $\phi(x)$ we obtain \eqref{eq:compact_approximation_L_p} as a consequence of the Dominated Convergence Theorem.
\end{proof}
\begin{lemma}\label{lem:compact_approximation_W_p}
Let $\nu$ be a weight function that satisfies the compact boundedness condition and let $\xi_n$ be as in \eqref{eq:xi_n}. Then, for each $p\in[1,\infty)$ and for every $\phi\in W^{1,p}\pa{\nu}$, we have that $\phi\xi_n\in W^{1,p}\pa{\nu}$ and
\begin{equation}\label{eq:compact_approximation_W_p}
\norm{\phi-\phi\xi_n}_{W^{1,p}\pa{\nu}}\underset{n\rightarrow\infty}{\longrightarrow}0.
\end{equation}
\end{lemma}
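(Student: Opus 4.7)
The plan is to first establish membership of $\phi\xi_n$ in $W^{1,p}(\nu)$ by invoking the product rule (Lemma \ref{lem:product_rule}), and then to control the $W^{1,p}(\nu)$ distance by splitting the gradient difference into two pieces, each of which will be handled by Lemma \ref{lem:compact_apporximation_general}.

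For the first step, I note that by part \eqref{item:xi_is_lip} of Lemma \ref{lem:compact_apporximation_general}, $\xi_n$ is a bounded Lipschitz function on $\R^d$ with compact support, so Lemma \ref{lem:product_rule} (applied with $\phi$ and $\xi=\xi_n$) yields that $\phi\xi_n$ has a weak gradient on $\R^d$ given almost everywhere by
\begin{equation}\nonumber
\nabla(\phi\xi_n)(x) = \xi_n(x)\nabla\phi(x) + \phi(x)\nabla\xi_n(x).
\end{equation}
Since $0\le\xi_n\le 1$ and $\|\nabla\xi_n\|_{\ell^p}\le d^{1/p}/n$ almost everywhere, it is immediate that $\phi\xi_n\in L^p(\nu)$ and $\nabla(\phi\xi_n)\in L^p(\nu,\C^d)$; thus $\phi\xi_n\in W^{1,p}(\nu)$.

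For the convergence, I would write
\begin{equation}\nonumber
\phi - \phi\xi_n = (1-\xi_n)\phi,\qquad \nabla\phi - \nabla(\phi\xi_n) = (1-\xi_n)\nabla\phi - \phi\nabla\xi_n
\end{equation}
and treat three quantities: $\|(1-\xi_n)\phi\|_{L^p(\nu)}$, $\|(1-\xi_n)\nabla\phi\|_{L^p(\nu,\C^d)}$, and $\|\phi\nabla\xi_n\|_{L^p(\nu,\C^d)}$. The first two are direct applications of part \eqref{item:phi_times_xi_approximates_phi} of Lemma \ref{lem:compact_apporximation_general}: for the first, take $\rho=\nu$ and the function $\phi$ itself, which is in $L^p(\nu)$; for the second, apply it componentwise to each partial derivative $\partial_i\phi$, which belongs to $L^p(\nu)$ by virtue of $\phi\in W^{1,p}(\nu)$, and then sum the $p$-th powers.

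For the remaining term $\|\phi\nabla\xi_n\|_{L^p(\nu,\C^d)}$, using the pointwise estimate on the $\ell^p$-norm of $\nabla\xi_n$ from part \eqref{item:xi_is_lip} of Lemma \ref{lem:compact_apporximation_general} and the fact that $\nabla\xi_n$ is supported in the annulus $\{n<|x|<2n\}$, I get
\begin{equation}\nonumber
\|\phi\nabla\xi_n\|_{L^p(\nu,\C^d)}^p = \int_{\R^d}|\phi(x)|^p\|\nabla\xi_n(x)\|_{\ell^p}^p\nu(x)\,dx \le \frac{d}{n^p}\int_{\{n<|x|<2n\}}|\phi(x)|^p\nu(x)\,dx.
\end{equation}
Since $|\phi|^p\nu\in L^1(\R^d)$, the right-hand side tends to $0$ as $n\to\infty$ (even ignoring the $1/n^p$ factor, the tail integral over $\{|x|>n\}$ vanishes in the limit by dominated convergence). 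Combining the three estimates gives \eqref{eq:compact_approximation_W_p}. There is no genuine obstacle here; the only mild care needed is in converting between the componentwise definition of the $L^p(\nu,\C^d)$ norm and the pointwise $\ell^p$ bound on $\nabla\xi_n$.
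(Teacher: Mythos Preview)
Your proof is correct and follows essentially the same approach as the paper: invoke the product rule (Lemma \ref{lem:product_rule}) to get membership and the gradient formula, then split $\nabla\phi-\nabla(\phi\xi_n)$ into $(1-\xi_n)\nabla\phi$ and $\phi\nabla\xi_n$, handling the former via Lemma \ref{lem:compact_apporximation_general} (or dominated convergence) and the latter via the pointwise bound $\|\nabla\xi_n\|_{\ell^p}\le d^{1/p}/n$. The only cosmetic difference is that the paper bounds $\|\phi\nabla\xi_n\|_{L^p(\nu,\C^d)}$ directly by $\frac{d^{1/p}}{n}\|\phi\|_{L^p(\nu)}$ without isolating the annulus, whereas you additionally exploit the support of $\nabla\xi_n$; either works.
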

\begin{proof}
Our hypothesis on $\nu$ enables us to use Lemma \ref{lem:product_rule} and so we see that $\phi\xi_n$ has a weak gradient on $\R^d$, which is given by
$$\xi_n(x)\nabla \phi(x)+ \phi(x) \nabla \xi_n(x).$$
Since $\xi_n$ and $\nabla \xi_n$ are bounded, for each fixed $n$, and since $\phi\in W^{1,p}\pa{\nu}$ we see that $\phi\xi_{n}\in L^{p}\left(\nu\right)$
and $\nabla\left(\phi\xi_{n}\right)\in L^{p}\left(\nu,\mathbb{C}^{d}\right)$. This implies that $\phi\xi_n\in W^{1,p}\pa{\nu}$. 
Moreover, with the help of \eqref{eq:ptwiseGradient-EllP_estimate}, we have that 
$$\norm{\nabla \phi- \nabla \pa{\phi\xi_n}}_{L^p\pa{\nu,\C^d}} \leq \norm{\nabla \phi - \xi_n\nabla \phi}_{L^p\pa{\nu,\C^d}}+\norm{\phi \nabla \xi_n}_{L^p\pa{\nu,\C^d}}$$
$$\leq \pa{\int_{\R^d} \abs{1-\xi_n(x)}^p \abs{\nabla\phi(x)}^p_{\ell^p}\nu(x)dx}^{\frac{1}{p}}+\frac{d^{\frac{1}{p}}}{n}\norm{\phi}_{L^p\pa{\nu}}.$$
So, as in the proof of part \eqref{item:phi_times_xi_approximates_phi} of Lemma \ref{lem:compact_apporximation_general}, the Dominated Convergence Theorem obviously provides what is needed to show that
$$\lim_{n\to\infty}\left\Vert \nabla\phi-\nabla\left(\phi\xi_{n}\right)\right\Vert _{L^{p}\left(\nu,\mathbb{C}^{d}\right)}=0.$$
And of course Lemma \ref{lem:compact_apporximation_general} also gives us that 
$$\lim_{n\rightarrow\infty}\norm{\phi-\phi\xi_n}_{L^p\pa{\nu}}=0.$$
 These two conditions show that \eqref{eq:compact_approximation_W_p} holds, and thus complete the proof of the present lemma. 
\end{proof}
A very useful feature of Lemmas \ref{lem:compact_apporximation_general} and \ref{lem:compact_approximation_W_p} is the fact that the approximating sequence in the relevant semi-normed weighted $L^{p}$ space or weighted Sobolev space is always constructed in the same way, regardless of \textit{which $p$ or weight function we use!} This fact immediately proves the following obvious extension of these two lemmas which will be essential for us when we seek  to approximate functions in the normed space $\mathcal{W}^{p}(\theta,r_{\omega})$, precisely because its norm is the sum of the (semi-)norms of two \textit{differently weighted} spaces. As in Lemma \ref{lem:compact_apporximation_general} and Definition \ref{def:SemiNorm}, here again we will encounter non-negative functions acting essentially as weights. This time there will be a finite collection of possibly different non-negative functions $\rho_{\beta}$ and semi-normed spaces $L^{q_{\beta}}(\rho_{\beta})$ associated with them.
\begin{lemma}\label{lem:multiple_compact_apporimation}
Let $\nu_{1},\dots,\nu_{j}$ be weight functions that satisfy the compact boundedness condition, and let $\rho_{1},\dots,\rho_{k}$ be arbitrary non-negative measurable functions. Given $j+k$ exponents $p_{1},...,p_{j},q_{1},....,q_{k}$ in $[1,\infty)$, we let $X$ be the linear space $X=\left(\bigcap_{\alpha=1}^{j}W^{1,p_{\alpha}}\left(\nu_{\alpha}\right)\right)\cap\left(\bigcap_{\beta=1}^{k}L^{q_{\beta}}\left(\rho_{\beta}\right)\right)$ equipped with the norm 
$$
\left\Vert \phi\right\Vert _{X}:=\sum_{\alpha=1}^{j}\left\Vert \phi\right\Vert _{W^{1,p_{\alpha}}\left(\nu_{\alpha}\right)}+\sum_{\beta=1}^{k}\left\Vert \phi\right\Vert _{L^{q_{\beta}}\left(\rho_{\beta}\right)}.$$
For each $n\in\mathbb{N}$, let $\xi_{n}$ be the function defined by \eqref{eq:xi_n}. Then, for every $\phi\in X$, the function $\phi\xi_{n}$ is also in $X$ and 
$$
\lim_{n\to\infty}\left\Vert \phi-\phi\xi_{n}\right\Vert _{X}=0.
$$
\end{lemma}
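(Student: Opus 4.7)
The plan is to deduce the statement by a direct application, term by term, of Lemmas \ref{lem:compact_apporximation_general} and \ref{lem:compact_approximation_W_p}. The crucial point, as flagged in the paragraph immediately preceding the lemma, is that the cutoff functions $\xi_n$ are constructed once and for all, independently of $p$ and of the particular weight function or non-negative function involved. This universality is what allows a single sequence $\{\xi_n\}_{n\in\mathbb{N}}$ to approximate $\phi$ simultaneously in all of the finitely many constituent (semi-)normed spaces whose intersection defines $X$.

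First I would verify that $\phi\xi_n\in X$ for every $n\in\mathbb{N}$. For each $\alpha\in\{1,\dots,j\}$, Lemma \ref{lem:compact_approximation_W_p}, applied with $\nu=\nu_\alpha$ and $p=p_\alpha$ (we have the required compact boundedness of $\nu_\alpha$), gives $\phi\xi_n\in W^{1,p_\alpha}(\nu_\alpha)$. For each $\beta\in\{1,\dots,k\}$, the pointwise bound $0\le\xi_n(x)\le 1$ gives $|\phi(x)\xi_n(x)|^{q_\beta}\le|\phi(x)|^{q_\beta}$ almost everywhere, so $\phi\xi_n\in L^{q_\beta}(\rho_\beta)$ follows immediately from $\phi\in L^{q_\beta}(\rho_\beta)$. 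Consequently $\phi\xi_n\in X$.

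Next I would verify convergence of each summand in the norm $\|\cdot\|_X$. For each $\alpha\in\{1,\dots,j\}$, the conclusion \eqref{eq:compact_approximation_W_p} of Lemma \ref{lem:compact_approximation_W_p} (with $\nu=\nu_\alpha$ and $p=p_\alpha$) yields
\[
\lim_{n\to\infty}\left\Vert \phi-\phi\xi_n\right\Vert_{W^{1,p_\alpha}(\nu_\alpha)}=0.
\]
For each $\beta\in\{1,\dots,k\}$, membership of $\phi$ in $L^{q_\beta}(\rho_\beta)$ is precisely the hypothesis $\int_{\R^d}|\phi(x)|^{q_\beta}\rho_\beta(x)\,dx<\infty$ required by Lemma \ref{lem:compact_apporximation_general} (with $p$ there specialised to $q_\beta$ and $\rho$ to $\rho_\beta$), whose conclusion \eqref{eq:compact_approximation_L_p} provides
\[
\lim_{n\to\infty}\left\Vert \phi-\phi\xi_n\right\Vert_{L^{q_\beta}(\rho_\beta)}=0.
\]

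Summing these $j+k$ individual limits and using the definition of $\|\cdot\|_X$ yields $\lim_{n\to\infty}\|\phi-\phi\xi_n\|_X=0$, which is the desired conclusion. There is no substantive obstacle to be overcome: all the analytic content already resides in the two preceding lemmas, and the present statement is essentially a bookkeeping lemma whose only purpose is to record that a single universal cutoff sequence works simultaneously for any finite collection of weighted Sobolev and weighted $L^p$ factors, with possibly different exponents and weights. This universality is exactly what is needed later when approximating elements of $\mathcal{W}^p(\theta,r_\omega)$, whose norm involves two differently weighted integrals.
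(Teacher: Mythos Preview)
Your proposal is correct and follows precisely the approach the paper takes: the paper itself does not spell out a proof but simply remarks that the universality of the cutoff sequence $\{\xi_n\}$ makes the lemma an ``obvious extension'' of Lemmas \ref{lem:compact_apporximation_general} and \ref{lem:compact_approximation_W_p}, which is exactly what you have written out in detail.
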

Now that we know how to approximate functions of interest to us by compactly supported functions, our next step will be to approximate those approximating functions in turn by smoother functions, not merely Lipschitz functions but even elements of $C_{c}^{\infty}\pa{\mathbb{R}^{d}}$.
An essential ingredient for doing this is the following standard result:
\begin{lemma}\label{lem:mollifier}
Let $\eta \in C_c^{\infty}\pa{\R^d}$ be a given non-negative function which is supported in $B_1(0)$ and satisfies $\int_{\R^d}\eta(x)dx=1$. For $p\in[1,\infty)$ let $\phi$ be a given function in $L^p\pa{\R^d}$ that is compactly supported. Define:
$$\phi_n(x) := n^{d}\int_{\R^d} \eta\pa{n(x-y)}\phi(y)dy = \eta_n \ast \phi (x),$$
where $\eta_n(x)=n^d\eta\pa{nx}$. Then
\begin{enumerate}[(i)]
\item\label{item:C_c_molZZ} $\phi_n\in C_c^\infty\pa{\R^d}$ and is supported in $D_n=\br{x\in \R^d\;|\; \text{dist}\pa{x,\text{supp}\phi}\leq \frac{1}{n}}$.
\item\label{item:convergence_in_L_p_molZZ} $\phi_n$ converges to $\phi$ in $L^p\pa{\R^d}$.
\item\label{item:convergence_in_W_p_molZZ} If in addition $\phi\in W^{1,p}\pa{\R^d}$ we have that $\phi_n\in W^{1,p}\pa{\R^d}$ and 
$$\lim_{n\to\infty}\norm{\phi-\phi_n}_{W^{1,p}\pa{\R^d}}=0.$$
\end{enumerate}
\end{lemma}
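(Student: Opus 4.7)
The plan is to prove the three parts in order, each relying on standard properties of convolution with a smooth mollifier. Throughout, I would use the fact that $\eta_n$ is supported in $B_{1/n}(0)$ and $\int_{\mathbb{R}^d}\eta_n = 1$, together with the convolution identity $\phi_n = \eta_n \ast \phi$.

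For part \eqref{item:C_c_molZZ}, I would argue that, since $\eta$ is $C^\infty$ with compact support, so is $\eta_n$, and hence one may differentiate under the integral sign arbitrarily many times to conclude $\phi_n \in C^\infty(\mathbb{R}^d)$. The support statement follows from observing that if $x \notin D_n$ then $\eta_n(x-y) = 0$ for every $y \in \mathrm{supp}(\phi)$, making the integrand vanish identically; combined with the compactness of $\mathrm{supp}(\phi)$, this gives $\phi_n \in C_c^\infty(\mathbb{R}^d)$ with support in $D_n$.

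For part \eqref{item:convergence_in_L_p_molZZ}, I would first prove the claim for $\phi \in C_c(\mathbb{R}^d)$, where the convolution $\phi_n(x) = \int_{\mathbb{R}^d}\eta(z)\phi(x - z/n)\,dz$ converges to $\phi(x)$ uniformly on $\mathbb{R}^d$ by uniform continuity, and the supports $D_n$ stay in a fixed compact set for large $n$, yielding $L^p$ convergence. Then I would extend to general compactly supported $\phi \in L^p$ by a three-epsilon argument: approximate $\phi$ in $L^p$ by $\psi \in C_c(\mathbb{R}^d)$, use Young's inequality $\|\eta_n \ast (\phi-\psi)\|_{L^p} \le \|\eta_n\|_{L^1}\|\phi-\psi\|_{L^p} = \|\phi-\psi\|_{L^p}$ to control the mollified difference, and combine with the convergence $\psi_n \to \psi$ already established. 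Here the compact support of $\phi$ ensures $\phi \in L^p(\mathbb{R}^d)$ so the approximation by $C_c$ functions is legitimate.

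For part \eqref{item:convergence_in_W_p_molZZ}, the key identity to establish is $\nabla \phi_n = \eta_n \ast \nabla \phi$, understood pointwise for the smooth function $\phi_n$ and in the weak sense for $\phi$. I would obtain this by writing $\partial_i \phi_n(x) = \int (\partial_i \eta_n)(x-y)\phi(y)\,dy$ (valid since $\phi$ is locally integrable and $\eta_n$ is smooth with compact support), then recognizing $(\partial_i \eta_n)(x-y) = -\partial_{y_i}[\eta_n(x-y)]$ and applying the definition of the weak derivative \eqref{eq:WeakDeriv} with test function $y \mapsto \eta_n(x-y)$, which is legitimately in $C_c^\infty(\mathbb{R}^d)$. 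Once this identity is in hand, since $\phi$ and each component of $\nabla \phi$ are compactly supported (as $\phi$ itself is) and in $L^p$, applying part \eqref{item:convergence_in_L_p_molZZ} separately to $\phi$ and to each partial derivative $\partial_i \phi$ immediately gives both $\phi_n \in W^{1,p}(\mathbb{R}^d)$ and $\|\phi - \phi_n\|_{W^{1,p}} \to 0$.

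The only step requiring genuine care is the identity $\nabla \phi_n = \eta_n \ast \nabla \phi$ in part \eqref{item:convergence_in_W_p_molZZ}, since one must be sure that translating the smooth compactly supported function $\eta_n(x-\cdot)$ is permitted as a test function for the weak derivative of $\phi$; the remaining arguments are routine applications of uniform continuity, Young's inequality, and the Dominated Convergence Theorem.
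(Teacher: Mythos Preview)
Your proposal is correct and follows exactly the standard textbook argument. The paper itself does not give a proof of this lemma; it simply states ``We refer to standard textbooks for the standard proof'' and cites the relevant pages in Evans and in Wheeden--Zygmund, whose proofs proceed precisely along the lines you describe: differentiation under the integral sign for part \eqref{item:C_c_molZZ}, the uniform-continuity-plus-density argument with Young's inequality for part \eqref{item:convergence_in_L_p_molZZ}, and the identity $\nabla\phi_n=\eta_n\ast\nabla\phi$ (obtained by moving the derivative onto the test function $y\mapsto\eta_n(x-y)$ via \eqref{eq:WeakDeriv}) for part \eqref{item:convergence_in_W_p_molZZ}.
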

We refer to standard textbooks for the standard proof \footnote{For example, it is a straightforward variant of the proof of Theorem 6 of \cite[p. 630]{Evans}. See relevant material on pp. 629-631 of \cite{Evans} and also, for example, on pp. 148-149 of \cite{WheedenRZygmundA1977}.} of this result.\\
We will now obtain a variant of Lemma \ref{lem:multiple_compact_apporimation} as our next step towards proving our principal result in this section, Theorem \ref{thm:W_p_is_W_p_0}. This variant will be stated in much greater generality than we need for our immediate purposes here - but that comes at no extra cost in the proof. That greater generality may well turn out to be useful in future extensions of this research.\\
\begin{theorem}\label{thm:multiple_compact_smooth_apporimation}
Let $X$ be the normed space introduced in Lemma \ref{lem:multiple_compact_apporimation} and suppose that each of the weight functions $\nu_{1},\dots,\nu_{j}$
appearing in its definition satisfies the compact boundedness condition. Suppose also that each of the non-negative measurable functions $\rho_{1},\dots,\rho_{k}$
appearing in its definition is bounded from above on every compact set and that 
\begin{equation}\label{eq:MaxPandMaxQ}
\max\left\{ q_{1},....,q_{k}\right\} \le\max\left\{ p_{1},...,p_{j}\right\} .
\end{equation}
Then $C_{c}^{\infty}(\mathbb{R}^{d})$ is a dense subspace of $X$.
\end{theorem}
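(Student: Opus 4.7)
The plan is to reduce to the compactly supported case and then smooth by convolution, carefully using the two standing hypotheses (the compact boundedness of the $\nu_\alpha$ and the domination \eqref{eq:MaxPandMaxQ}) to transfer convergence from unweighted $L^{p}$ and $W^{1,p}$ norms to each of the weighted (semi-)norms entering the definition of $X$.

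First, given an arbitrary $\phi \in X$, I would apply Lemma \ref{lem:multiple_compact_apporimation} to obtain compactly supported approximants $\phi\xi_n \in X$ with $\|\phi - \phi\xi_n\|_X \to 0$. Since $C_c^\infty(\R^d)$ is a linear space and we seek density, this reduces the problem to approximating a generic element $\psi \in X$ with compact support, say $\text{supp}\,\psi \subset \overline{B_r(0)}$. Let $K$ denote a fixed compact neighbourhood of $\text{supp}\,\psi$, for example $K = \overline{B_{r+1}(0)}$, chosen so that the mollified functions $\psi_n := \eta_n \ast \psi$ from Lemma \ref{lem:mollifier} are all supported in $K$ for every $n \ge 1$.

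Next, because each $\nu_\alpha$ satisfies the compact boundedness condition, we have $m(K,\nu_\alpha) > 0$. Since $\psi$ vanishes outside $K$ and $\psi \in W^{1,p_\alpha}(\nu_\alpha)$, dividing by $m(K,\nu_\alpha)$ gives $\psi \in W^{1,p_\alpha}(\R^d)$ in the ordinary unweighted sense for every $\alpha$. Lemma \ref{lem:mollifier} then yields $\psi_n \in C_c^\infty(\R^d)$ with $\|\psi - \psi_n\|_{W^{1,p_\alpha}(\R^d)} \to 0$. Because $\psi - \psi_n$ is supported in $K$ and $M(K,\nu_\alpha) < \infty$, a direct pointwise majorisation gives
\[
\|\psi-\psi_n\|_{W^{1,p_\alpha}(\nu_\alpha)}^{p_\alpha} \;\le\; M(K,\nu_\alpha)\,\|\psi-\psi_n\|_{W^{1,p_\alpha}(\R^d)}^{p_\alpha} \;\longrightarrow\; 0
\]
for every $\alpha \in \{1,\dots,j\}$.

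The main obstacle, and the reason hypothesis \eqref{eq:MaxPandMaxQ} is needed, is that the $\rho_\beta$ are only required to be bounded \emph{above} on compact sets and may well vanish, so the trick of bounding the unweighted norm by the weighted one fails. To get around this, let $P := \max\{p_1,\dots,p_j\}$ and pick an index $\alpha^*$ with $p_{\alpha^*} = P$. From the previous paragraph $\psi \in L^P(K)$ unweighted, so by Hölder's inequality applied on the set $K$ of finite Lebesgue measure (and using $q_\beta \le P$ from \eqref{eq:MaxPandMaxQ})
\[
\|\psi-\psi_n\|_{L^{q_\beta}(K)} \;\le\; |K|^{\,\tfrac{1}{q_\beta} - \tfrac{1}{P}}\,\|\psi-\psi_n\|_{L^{P}(K)} \;\longrightarrow\; 0,
\]
where the right-hand factor tends to $0$ by Lemma \ref{lem:mollifier}(ii) applied with exponent $P$. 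Since $\rho_\beta$ is bounded above on the compact set $K$ and $\psi - \psi_n$ vanishes outside $K$, this yields $\|\psi-\psi_n\|_{L^{q_\beta}(\rho_\beta)} \to 0$ for every $\beta$. Summing the $j+k$ resulting convergences gives $\|\psi-\psi_n\|_X \to 0$, and combining with the initial reduction step completes the proof that $C_c^\infty(\R^d)$ is dense in $X$.
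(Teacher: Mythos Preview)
Your proof is correct and follows essentially the same route as the paper: reduce to the compactly supported case via Lemma \ref{lem:multiple_compact_apporimation}, mollify, use the lower bound of each $\nu_\alpha$ on compacts to get membership in the unweighted $W^{1,p_\alpha}$, and use the upper bounds together with \eqref{eq:MaxPandMaxQ} and H\"older to transfer convergence back to the weighted norms. The only cosmetic differences are that the paper records the inclusion $C_c^\infty(\R^d)\subset X$ explicitly at the outset, and it applies the mollifier lemma directly in each $L^{q_\beta}$ (after using H\"older to show $\psi\in L^{q_\beta}(\R^d)$), whereas you bound the $L^{q_\beta}$ difference by the $L^{P}$ difference and invoke the mollifier lemma once at exponent $P$; both variants are equivalent.
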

\begin{proof} 
The inclusion $C_{c}^{\infty}(\mathbb{R}^{d})\subset X$ follows immediately from the fact that all the functions $\nu_{1},\dots,\nu_{j}$ and $\rho_{1},\dots,\rho_{k}$ are bounded above on every compact set. \\
Let $\phi$ be an arbitrary compactly supported element of $X$ and let $\phi_{n}=\eta_{n}\ast\phi$ for each $n\in\mathbb{N}$, where the functions $\eta_{n}$ are as specified in Lemma \ref{lem:mollifier}. Then, by Lemma \ref{lem:mollifier}, all of the functions $\phi_{n}$ are in $C_{c}^{\infty}(\mathbb{R}^{d})$ and they and their gradients vanish on the complement of the compact set $D_{1}=\left\{ x\in\mathbb{R}^{d}:\mathrm{dist}(x,\mathrm{supp}\phi)\le2\right\} $.
Furthermore, our hypotheses ensure that the supremum 
$$M:=\sup_{x\in D_1}\left\{ \max_{\alpha\in\{1,...,j\}, \beta\in\left\{ 1,...,k\right\}}\left\{ \nu_{\alpha}(x),\rho_{\beta}(x)\right\} \right\} $$
is finite. This enables us to use the inequalities 
\begin{equation}\label{eq:rhoStuff}
\left\Vert \phi-\phi_{n}\right\Vert _{L^{q_{\beta}}(\rho_{\beta})}\le M^{\frac{1}{q_{\beta}}}\left\Vert \phi-\phi_{n}\right\Vert _{L^{q_{\beta}}(\mathbb{R}^{d})}
\end{equation}
and 
\begin{equation}\label{eq:omegaStuff}
\left\Vert \phi-\phi_{n}\right\Vert _{W^{1,p_{\alpha}}(\nu_{\alpha})}\le M^{\frac{1}{p_{\alpha}}}\left\Vert \phi-\phi_{n}\right\Vert _{W^{1,p_{\alpha}}(\mathbb{R}^{d})}
\end{equation}
which hold for all $n\in\mathbb{N}$, $\alpha\in\{1,...,j\}$ and $\beta\in\left\{ 1,...,k\right\} $.\\
The fact that $\inf_{x\in D_{1}}\nu_{\alpha}(x)>0$ ensures that $\phi\in W^{1,p_{\alpha}}\pa{\mathbb{R}^{d}}$ and therefore also $\phi\in L^{p_{\alpha}}(\mathbb{R}^{d})$ for each $\alpha\in\left\{ 1,...,j\right\} $.
Consequently, in view of \eqref{eq:MaxPandMaxQ}, H\"older's inequality and the fact that the support of $\phi$ is compact, we also have that $\phi\in L^{q_{\beta}}(\mathbb{R}^{d})$ for each $\beta\in\left\{ 1,..,k\right\} $. The membership of $\phi$ in all of these unweighted $L^{p}$ spaces permits us to use Lemma \ref{lem:mollifier} to obtain that the right sides of \eqref{eq:rhoStuff} and \eqref{eq:omegaStuff} tend to $0$ as $n$ tends to $\infty$ for all $\alpha\in\{1,...,j\}$ and $\beta\in\left\{ 1,...,k\right\}$. 
Consequently we also have that 
$$\lim_{n\to\infty}\left\Vert \phi-\phi_{n}\right\Vert _{X}=0.$$
This shows that every compactly supported element in $X$ can be approximated in $X$ norm by a sequence of functions in $C^\infty_{c}\pa{\mathbb{R}^{d}}$. Since we already know from Lemma \ref{lem:multiple_compact_apporimation} that every element in $X$ can be approximated in $X$ norm by a sequence of compactly supported functions in $X$, this completes the proof of the theorem.
\end{proof}
We are now ready to prove Theorem \ref{thm:W_p_is_W_p_0}:
\begin{proof}[Proof of Theorem \ref{thm:W_p_is_W_p_0}]
We shall in fact prove a stronger result, namely that $C_{c}^{\infty}(\mathbb{R}^{d})$ is dense in each of the spaces $W^{1,p}\left(\mathbb{R}^{d},\omega_{0}\right)$, $W^{1,p}\left(\mathbb{R}^{d},\omega_{1}\right)$ and $\mathcal{W}^{p}\left(\mathbb{R}^{d},\theta,r_{\omega}\right)$. This of course will imply \eqref{eq:W_p_is_Etc-1} and \eqref{eq:W_p_is_Etc-2}, since $C_{c}^{\infty}\left(\R^d\right)\subset Lip_{c}\left(\R^d\right)$. We begin by applying Theorem \ref{thm:multiple_compact_smooth_apporimation} in the special case where
$$j=k=1,\quad p_1=q_1=p,\quad \nu_1=\omega_m,\quad \rho_1=0$$
and where $m=0$ or $m=1$. In this case the theorem shows that $C_c^\infty\pa{\R^d}$ is dense in $X=W^{1,p}\left(\mathbb{R}^{d},\omega_{m}\right)$. This establishes \eqref{eq:W_p_is_Etc-1}.\\
It remains to prove \eqref{eq:W_p_is_Etc-2}. We start by recalling (cf. Remark \ref{rem:CBC-obvious}) that if $\omega_{0}$ and $\omega_{1}$ satisfy the compact boundedness condition, then obviously so do $\omega_{\theta}=\omega_{0}^{1-\theta}\omega_{1}^{\theta}$ and $r_{\omega}$.
As $r_{\omega}$ is locally Lipschitz, we use Rademacher's theorem to conclude that it is differentiable almost everywhere with essentially bounded gradient on each compact
set. Consequently, due to the strict positivity of  $r_{\omega}$, the function $\log r_{\omega}$ is also differentiable almost everywhere and its gradient, given by 
$$\nabla\log\left(r_{\omega}(x)\right)=\frac{\nabla r_{\omega}(x)}{r_{\omega}(x)},$$
is also essentially bounded on each compact set.\\
We summarise the above:
\begin{itemize}
\item $\omega_\theta$ satisfies the compact boundedness condition.
\item $\omega_\theta \abs{\nabla \log\pa{\rw}}^p$ is essentially bounded from above on every compact set.
\end{itemize}
These two properties are exactly what we need to enable us to apply Theorem \ref{thm:multiple_compact_smooth_apporimation} once more, where this time we choose $j=k=1$, $p_{1}=q_{1}=p$, $\nu_{1}=\omega_{\theta}$ and $\rho_{1}=\omega_{\theta}\left\vert \nabla\log r_{\omega}\right\vert ^{p}$.
These choices make $X$ coincide with $\mathcal{W}^{p}\left(\theta,\omega_{\theta}\right)$ to within equivalence of norms, and so the density of $C_{c}^{\infty}(\mathbb{R}^{d})$ in $X$, which is again guaranteed by Theorem \ref{thm:multiple_compact_smooth_apporimation}, establishes \eqref{eq:W_p_is_Etc-2} and therefore completes the proof of the theorem. 
\end{proof}
In all of our results in this section from Lemma \ref{lem:compact_apporximation_general} onwards we have been dealing only with the case where the underlying open set $U$ is all of $\mathbb{R}^{d}$. But let us conclude this section with the following remark about the possibility of extending our approximation theorems to the case where $U$
is a proper open subset of $\mathbb{R}^{d}$: 
\begin{remark}\label{rem:extension_of_approximationZZ}
As the reader may have noticed from the various steps leading to the proof of Theorem \ref{thm:multiple_compact_smooth_apporimation}, the two main ingredients which we used to show the theorem were the fact that we can approximate a function in $W^{1,p}\pa{\R^d,\omega}$ by a compactly supported function and Lemma \ref{lem:mollifier}. As a more general version of Lemma \ref{lem:mollifier} can be shown to be valid for \textit{any} open set $U\subset \R^d$ instead of $\R^d$ (see e.g. \cite{Evans} pp.
629-631), we see that if the open set $U$ and the weight function $\omega$ are such that any $\phi \in W^{1,p}\pa{U,\omega}$ can be approximated by compactly supported functions, then we will have that
$$W^{1,p}_0\pa{U,\omega}=W^{1,p}\pa{U,\omega}, \quad\quad \W^p_0\pa{U,\theta,\rw}=\W^p\pa{U,\theta,\rw},$$
and also that $C_c^\infty\pa{U}$ is dense in $W^{1,p}\pa{U,\omega}$ and in $\W^p\pa{U,\theta,\rw}$.\\
This raises the question of what properties of $U$ and $\omega$ might suffice to ensure that compactly supported functions are dense in $W^{1,p}(U,\omega)$. It would seem that this density property will hold if $\omega(x)$ is required to tend to $0$ sufficiently rapidly when $x$ approaches the boundary of $U$. Thus it might be natural to consider this question for weight functions which depend solely on the distance to the boundary, such as those which appear in the weighted Sobolev spaces discussed on pages 17-20 of \cite{Kufner}.
\end{remark}

\section{Proofs of the main theorem, and of its main consequences}\label{sec:proof}
In this short section we pool all the resources that we have developed in Sections \S\ref{sec:simple_inclusion}, \S\ref{sec:difficult_inclusion} and \S\ref{sec:approximation} to prove our main theorem, Theorem \ref{thm:main} and its main consequences: Theorems \ref{thm:main_for_W_0}, \ref{thm:main_for_U_including_R_d} and \ref{thm:Stein_Weiss_for_W}.
\begin{proof}[Proof of Theorem \ref{thm:main}]
Using Theorem \ref{thm:easy_inclusion} we obtain the inclusion \eqref{eq:easy_inclusion_thm} and one side of the inequality \eqref{eq:main_norm}. To complete the proof it remains to establish the inclusion \eqref{eq:hard_inclusion_thm} and the other side of \eqref{eq:main_norm}.
We start doing this by observing that, since $\omega_{0}$, $\omega_{1}$, and therefore obviously also $\omega_{\theta}$, each satisfy the compact boundedness condition (cf. Remark \ref{rem:CBC-obvious}), we can apply Lemma \ref{lem:banach_space_and_loc} and Lemma \ref{lem:lip_in_W_p} to obtain that 
\begin{enumerate}
\item\label{item:FirstThing} For every $p\in[1,\infty)$, each of the spaces $W^{1,p}\left(\omega_{0}\right)$, $W^{1,p}\left(\omega_{1}\right)$ and also $W^{1,p}\left(\omega_{\theta}\right)$, for every $\theta\in(0,1)$, is a Banach space that contains $Lip_{c}$.
\end{enumerate}
One of the hypotheses of Theorem \ref{thm:main} is that $r_{\omega}$ is locally Lipschitz on $U$. The strict positivity of $r_{\omega}$ therefore implies that 
\begin{enumerate}\setcounter{enumi}{1}
\item\label{item:B-SecondThing} The function $\log r_{\omega}$ is locally Lipschitz on $U$.
\end{enumerate}
The above two conditions \eqref{item:FirstThing} and \eqref{item:B-SecondThing} tell us (cf. Definition \ref{def:admissibility}) that the pair $\left(\omega_{0},\omega_{1}\right)$ is $\left(\theta,p\right)-$admissible for every $\theta\in(0,1)$ and $p\in[1,\infty)$. 
This permits us to invoke Theorem \ref{thm:difficult_inclusion_on_W_0}, which applies to any open subset $U$ of $\mathbb{R}^{d}$ and whose conclusions \eqref{eq:difficult_inclusion_subset} and \eqref{eq:difficult_inclusion_norm} are exactly the above mentioned inclusion and inequality which are required to complete the proof of Theorem \ref{thm:main}.
\end{proof}
\begin{proof}[Proof of Theorem \ref{thm:main_for_W_0}]
The first of the two inclusions in \eqref{eq:inclusion_for_W_0_main_simplified} has already been established as part of the proof of Theorem \ref{thm:main}.
The additional condition imposed in Theorem \ref{thm:main_for_W_0} for some $p\in[1,\infty)$ is exactly what is required to enable Theorem \ref{thm:SimultaneousApprox} to be applied in order to establish the second inclusion in \eqref{eq:inclusion_for_W_0_main_simplified} for that value of $p$ and for all $\theta\in(0,1)$.
\end{proof}
\begin{proof}[Proof of Theorem \ref{thm:main_for_U_including_R_d}]
The proof of Theorem \ref{thm:main} shows that the inclusions \eqref{eq:easy_inclusion_thm} and \eqref{eq:hard_inclusion_thm} hold also here. So, to prove the first part of Theorem \ref{thm:main_for_U_including_R_d} we simply substitute \eqref{eq:Wdensity01} and \eqref{eq:Wdensity02} in \eqref{eq:easy_inclusion_thm} and \eqref{eq:hard_inclusion_thm} to obtain \eqref{eq:main_for_U_including_R_d}. 
To prove the second part of the theorem, in which it is assumed that $U=\mathbb{R}^{d}$, we simply apply Theorem \ref{thm:W_p_is_W_p_0} to give us that \eqref{eq:Wdensity01} and \eqref{eq:Wdensity02} both hold for all $p\in[1,\infty)$.
\end{proof}
\begin{proof}[Proof of Theorem \ref{thm:Stein_Weiss_for_W}]
Since $g(x)$ is continuous and the exponential function is continuous and strictly positive, we conclude that if $\omega_1$ (or $\omega_0$) satisfies the compact boundedness condition, then so does $\omega_0$ (or $\omega_1$). Next, we note that
$$\rw(x)=e^{g(x)},$$
which implies, as $g(x)$ is Lipschitz and the exponential function is $C^1$ on $\R$, that $\rw(x)$ is locally Lipschitz. 
Thus we have shown that the conditions of Theorem \ref{thm:main} are satisfied, as claimed in part \eqref{item:conditions_are_valid} of Theorem \ref{thm:Stein_Weiss_for_W}. We next show part \eqref{item:SW_on_Rd}, namely that \eqref{eq:SteinWeissOnRd} holds to within equivalence of norms
when $U=\mathbb{R}^{d}$. Theorem \ref{thm:main} (or even simply Theorem \ref{thm:easy_inclusion}) gives us ``half'' of \eqref{eq:SteinWeissOnRd},
i.e. the inclusion ``$\subset$'' and the corresponding norm inequality. So we only need to show that 
$$W^{1,p}\left(\mathbb{R}^{d},\omega_{\theta}\right)\subset\left[W^{1,p}\left(\mathbb{R}^{d},\omega_{0}\right),W^{1,p}\left(\mathbb{R}^{d},\omega_{1}\right)\right]_{\theta}$$
and to obtain the reverse norm inequality corresponding to this reverse inclusion. In view of \eqref{eq:hard_inclusion_thm} and of Theorem \ref{thm:main} and Theorem \ref{thm:W_p_is_W_p_0} we can in turn reduce this task to showing that the inclusion 
\begin{equation}\label{eq:PseudoStum}
W^{1,p}\left(\mathbb{R}^{d},\omega_{\theta}\right)\subset\mathcal{W}^{p}\left(\mathbb{R}^{d},\theta,r_{\omega}\right)
\end{equation}
holds and is continuous.\\
For any given element $\phi\in W^{1,p}\left(\omega_{\theta}\right)$
we see that 
$$ \left\Vert \phi\right\Vert _{L^{p}\left(\omega_{\theta}\left\vert \nabla\log\left(r_{\omega}\right)\right\vert ^{p}\right)}^{p}=\int_{U}\left\vert \phi(x)\right\vert ^{p}\left\vert \nabla g(x)\right\vert ^{p}\omega_{\theta}(x)dx$$
$$\leq\left\Vert \nabla g\right\Vert _{L^{\infty}}^{p}\left\Vert \phi\right\Vert _{L^{p}\left(\omega_{\theta}\right)}^{p}.$$
This implies that $\phi$ is automatically in $\mathcal{W}^{p}\left(\theta,r_{\omega}\right)$ and 
\begin{equation}\label{eq:Khadash}
\begin{gathered}
\left\Vert \phi\right\Vert _{\mathcal{W}^{p}\left(\theta,r_{\omega}\right)}=\left(\left\Vert \phi\right\Vert _{W^{1,p}\left(\omega_{\theta}\right)}^{p}+\left\Vert \phi\right\Vert _{L^{p}\left(\omega_{\theta}\left\vert \nabla\log\left(r_{\omega}\right)\right\vert ^{p}\right)}^{p}\right)^{\frac{1}{p}}\\
\leq\left(1+\left\Vert \nabla g\right\Vert _{L^{\infty}}^{p}\right)^{\frac{1}{p}}\left\Vert \phi\right\Vert _{W^{1,p}\left(\omega_{\theta}\right)}.
\end{gathered}
\end{equation}
This establishes \eqref{eq:PseudoStum} and the required associated norm inequality and thus takes care of part \eqref{item:SW_on_Rd}. (In fact \eqref{eq:Khadash} also shows that the three norms appearing in \eqref{eq:main_norm} are equivalent to each other.) Finally we establish the remaining parts \eqref{item:SW_on_W0} and \eqref{item:SW_on_U} of Theorem \ref{thm:Stein_Weiss_for_W} by using exactly the same considerations, together with Theorems \ref{thm:main_for_W_0} and \ref{thm:main_for_U_including_R_d}.
\end{proof}

\par In the last two sections of the paper we will discuss additional topics related to our study, including an example of when $\W^p\pa{\theta,\rw}$ is not $W^{1,p}\pa{\omega_\theta}$, and some potential applications and open problems.
\section{More about the space $\W^p\pa{\theta,\rw}$}\label{sec:more_about_WW_p}
In this section we will discuss two more properties of the space $\W^p\pa{\theta,\rw}$. Namely, we will answer the following two questions:
\begin{ques}\label{ques:WW_p=W_1_pZZ} 
Is it possible for $\W^p\pa{\theta,\rw}$ to not be $W^{1,p}\pa{\omega_\theta}$?
\end{ques}
\begin{ques}\label{ques:smaller_subspace_of_WW_pZZ} 
Can we find a smaller, yet ``big enough'', subspace of $\W^p\pa{\theta,\rw}$ that will allow us to only consider the weight $\omega_\theta$? 
\end{ques}
Our answers will give us more information about the space $\rpa{W^{1,p}\pa{U,\omega_0},W^{1,p}\pa{U,\omega_1}}_{\theta}$.
\subsection{Can $\W^p\pa{\theta,\rw}$ be smaller than $W^{1,p}\pa{\omega_\theta}$?}\label{subsec:W_p_is_not_W_1p} 
We consider it very unlikely that the Stein-Weiss like formula \eqref{eq:OurFormula} could hold for \textit{all} choices of the set $U$ and weight functions $\omega_{0}$ and $\omega_{1}$. Here we shall begin some attempts towards finding a counterexample.\\
One might expect that when $U$ has a very complicated structure or when some or all of the hypotheses of Theorem \ref{thm:main} do not hold, this could prevent \eqref{eq:OurFormula} from holding. But the examples which we are about to present hint that maybe \eqref{eq:OurFormula} can fail even when $U$ is simply all of $\mathbb{R}^{d}$ and the above mentioned hypotheses do hold.\\
From this point onwards we will take $U$ to be $\R^d$ and remove almost all mention of the underlying set from our notation.\\
As can be seen from \eqref{eq:main_for_U_including_R_d} in Theorem \ref{thm:main_for_U_including_R_d}, a sufficient condition for obtaining a Stein-Weiss like theorem when $U=\R^d$ is that 
$$\W^p\pa{\theta,\rw}=W^{1,p}\pa{\omega_\theta}$$
and indeed (since obviously $\W^p\pa{\theta,\rw}\subset W^{1,p}\pa{\omega_\theta}$) this condition is effectively what we have used in the proof of Theorem \ref{thm:Stein_Weiss_for_W}. (Cf. \eqref{eq:PseudoStum}.) So a natural first step in searching for the above mentioned counterexample is to look for pairs of weight functions for which
\begin{equation}\label{eq:inclusion_not_equal}
\W^p\pa{\theta,\rw}\underset{\not=}{\subset}W^{1,p}\pa{\omega_\theta}.
\end{equation}
We shall describe such pairs. In fact, for each choice of $\theta\in(0,1)$, we will find such a pair which satisfies the hypotheses of Theorem \ref{thm:main}. But we should emphasize that at this stage we do not know whether these weight functions, which satisfy \eqref{eq:inclusion_not_equal}, will also turn out to provide a counterexample to \eqref{eq:OurFormula}.
To simplify the discussion we shall make do with presenting explicit formulas for our weight functions only in the case $p=1$. However it is not difficult to modify our examples so that they apply when $p\in(1,\infty)$.\\
Let us start by stating the relevant properties that a function, say $\phi$, which is in $W^{1,p}\pa{\omega_\theta}$ but not in $\W^p\pa{\theta,\rw}$ must have: Since $\phi\in W^{1,p}\pa{\omega_\theta}$ we have that $\phi$ has a weak gradient on $\R^d$ and
\begin{equation}\label{eq:counter_phi_in_W}
\int_{\R^d}\abs{\phi(x)}^p \omega_\theta(x)dx<\infty,\quad \int_{\R^d}\abs{\nabla \phi(x)}^p \omega_\theta(x)dx<\infty.
\end{equation}
On the other hand, since $\phi\not\in \W^p\pa{\theta,\rw}$ we must have that 
\begin{equation}\label{eq:counter_phi_not_in_WW}
\int_{\R^d}\abs{\phi(x)}^p \abs{\nabla \log\pa{\rw(x)}}^p \omega_\theta(x)dx=\infty.
\end{equation}
We shall simply choose $\phi\equiv 1$. Then the gradient condition in \eqref{eq:counter_phi_in_W} is automatically satisfied, and conditions \eqref{eq:counter_phi_in_W} and \eqref{eq:counter_phi_not_in_WW} become the following two conditions on $\omega_0$ and $\omega_1$:
 \begin{equation}\label{eq:counter_example_process}
\omega_\theta\in L^1\pa{\R^d},\quad\quad \int_{\R^d}\abs{\nabla \log\pa{\frac{\omega_0(x)}{\omega_1(x)}}}^p \omega_\theta(x)dx=\infty.
\end{equation}
Let us simplify things further by also choosing $\omega_0\equiv 1$. In this case
$$\omega_\theta(x)=\omega_1(x)^{\theta},$$ 
and we can see that showing that there is a weight function $\omega_\theta$ which satisfies \eqref{eq:counter_example_process} in this case is equivalent to finding a weight function $\omega$ (which will play the role of $\omega_\theta$) such that:
\begin{equation}\label{eq:counter_example_process_simple}
\int_{\R^d}\omega(x)dx<\infty,\quad\quad \int_{\R^d}\abs{\nabla \omega(x)}^p \omega(x)^{1-p} dx=\infty.
\end{equation}
It is clear that finding such a function $\omega$ will in fact show that \eqref{eq:counter_example_process} can be satisfied for every choice of $\theta\in(0,1)$ and for the particular value of $p$ appearing in both \eqref{eq:counter_example_process_simple} and \eqref{eq:counter_example_process}.\\
At this stage we shall specialise to the case $p=1$. In this case \eqref{eq:counter_example_process_simple} is simply the statement that $\omega$ is a function in $L^1\pa{\R^d}$ whose gradient is not in $L^1\pa{\R^d,\C^d}$.
For various obvious examples of integrable functions $\omega$ which have some monotone decay at infinity (for example, like the reciprocal of a polynomial, or of an exponential) it turns out that \eqref{eq:counter_example_process_simple} does not hold. However, as we shall now see, if we modify such a function by adding a term which oscillates rapidly near infinity and is suitably bounded, then the resulting function $\omega$ does satisfy \eqref{eq:counter_example_process_simple}:
Consider the strictly positive $C^1$ function
\begin{equation}\label{eq:counter_example_weights}
\begin{gathered}
\omega(x)=\frac{1}{\pa{1+\abs{x}^2}^\alpha}\pa{1+\sin\pa{\abs{x}^\beta}+\frac{1}{1+\abs{x}^2}}\\
=\frac{1}{\pa{1+\abs{x}^2}^\alpha}\pa{1+\frac{1}{1+\abs{x}^2}}+\frac{\sin\pa{\abs{x}^\beta}}{\pa{1+\abs{x}^2}^\alpha}.
\end{gathered}
\end{equation} 
We shall now show that one can easily choose positive numbers $\alpha$ and $\beta$ for which $\omega$ will satisfy \eqref{eq:counter_example_process_simple} for $p=1$. In fact, a similar computation shows that for any $p\in (1,\infty)$, one can find an $\alpha$ and a $\beta$ for which \eqref{eq:counter_example_weights} will satisfy \eqref{eq:counter_example_process_simple}.\\
Clearly, if $\alpha > \frac{d}{2}$ then $\omega(x)\in L^1\pa{\R^d}$ since
$$\omega(x) \leq \frac{3}{\pa{1+\abs{x}^2}^\alpha}.$$
Next, as
$$\nabla \omega(x) =\frac{-2\alpha}{\pa{1+\abs{x}^2}^{\alpha+1}}\pa{1+\frac{1}{1+\abs{x}^2}} x -\frac{2x}{\pa{1+\abs{x}^2}^{\alpha+2}}$$
$$-\frac{2\alpha\sin\pa{\abs{x}^\beta}}{\pa{1+\abs{x}^2}^{\alpha+1}}x +\frac{\beta\abs{x}^{\beta-2}\cos\pa{\abs{x}^\beta}}{\pa{1+\abs{x}^2}^{\alpha}}x $$
and
$$\abs{\frac{1}{\pa{1+\abs{x}^2}^{\alpha+1}}\pa{1+\frac{1}{1+\abs{x}^2}} x} \leq \frac{1}{\pa{1+\abs{x}^2}^{\alpha}},$$
$$\abs{\frac{2x}{\pa{1+\abs{x}^2}^{\alpha+2}}}\leq \frac{1}{\pa{1+\abs{x}^2}^{\alpha+1}}$$
and
$$\abs{\frac{\sin\pa{\abs{x}^\beta}}{\pa{1+\abs{x}^2}^{\alpha+1}}x }\leq \frac{1}{\pa{1+\abs{x}^2}^{\alpha}},$$
we see that if $\alpha>\frac{d}{2}$ then $\nabla \omega \in L^1\pa{\R^d,\C^d}$ if and only if
$$I_{\alpha,\beta}:=\int_{\R^d}\abs{\frac{\abs{x}^{\beta-2}\cos\pa{\abs{x}^\beta}}{\pa{1+\abs{x}^2}^{\alpha}}x}dx<\infty.$$
We now compute
$$I_{\alpha,\beta}=\int_{\R^d}\frac{\abs{x}^{\beta-1}\abs{\cos\pa{\abs{x}^\beta}}}{\pa{1+\abs{x}^2}^{\alpha}}dx$$
$$=\abs{\mathbb{S}^{d-1}}\int_0^\infty \frac{r^{\beta+d-2}\abs{\cos\pa{r^\beta}}}{\pa{1+r^2}^\alpha}dr \geq \frac{\abs{\mathbb{S}^{d-1}}}{2^\alpha}\int_1^\infty r^{\beta+d-2-2\alpha}\abs{\cos\pa{r^\beta}}dr $$
$$\underset{y=r^\beta}{=}\frac{\abs{\mathbb{S}^{d-1}}}{2^\alpha \beta}\int_1^\infty y^{\frac{d-1-2\alpha}{\beta}}\abs{\cos\pa{y}}dr $$
If, for example, we choose $\alpha=d$ and $\beta=d+1$, then this last integral is infinite. Since we then also have  $\alpha>\frac{d}{2}$ as required in the previous part of this discussion, this completes our proof that \eqref{eq:inclusion_not_equal} can hold for $p=1$ and for every $\theta\in(0,1)$ for suitable choices of weight functions.
\begin{remark}\label{rem:ExoticWeightFunctionsZZ}
As we already mentioned, there exist other examples of pairs of weight functions which satisfy \eqref{eq:inclusion_not_equal} for $p\in(1,\infty)$. Some of them nevertheless also  satisfy \eqref{eq:OurFormula}. We refer the reader to the appendix where we give an example of a pair of weight functions (even in $C^{\infty}\pa{\mathbb{R}^{d}}$) which in fact satisfies both \eqref{eq:inclusion_not_equal} and \eqref{eq:OurFormula} for every $p\in[1,\infty)$ and $\theta\in(0,1)$.
\end{remark}

\subsection{A simpler subspace of $\left[W^{1,p}\left(U,\omega_{0}\right),W^{1,p}\left(U, \omega_{1}\right)\right]_{\theta}$}\label{subsec:simple_subspace}
The main result of this paper shows that, under relatively mild conditions, the interpolation space 
$$\left[W^{1,p}\left(U,\omega_{0}\right),W^{1,p}\left(U,\omega_{1}\right)\right]_{\theta}$$ 
lies between $W^{1,p}\left(U,\omega_{\theta}\right)$ and a smaller subspace $\mathcal{W}^{p}\left(U,\theta,r_{\omega}\right)$, which
can be expressed as $W^{1,p}\left(U,\omega_{\theta}\right)\cap X$ (with an appropriate norm) where $X$ is the semi-normed weighted space $L^{p}\left(U,\omega_{\theta}\left\vert \nabla\log r_{\omega}\right\vert ^{p}\right)$
(see Definition \ref{def:SemiNorm}).\\
For some applications, such as those which we will shortly discuss in \ref{subsec:application_evolution}, it will be
convenient to find a more simply defined space $\mathcal{X}$, which is embedded continuously
in $X$. Such a space will therefore yield the chain of inclusions 
$$
W^{1,p}\left(U,\omega_{\theta}\right)\cap\mathcal{X}\subset\left[W^{1,p}\left(U,\omega_{0}\right),W^{1,p}\left(U,\omega_{1}\right)\right]_{\theta}\subset W^{1,p}\left(U,\omega_{\theta}\right).
$$
A simple $\mathcal{X}$ to choose, if possible, would be a standard weighted Lebesgue space with respect to the weight function $\omega_{\theta}$ \textit{alone}. We shall now discuss one possible way which can enable us to choose an $\mathcal{X}$ of this kind, however with an exponent larger than $p$. It requires us to impose one more condition on our weight functions.
\begin{definition} \label{def:M-theta-q}
Let $\omega_{0}$ and $\omega_{1}$ be weight functions on an open subset $U$ of $\mathbb{R}^{d}$ and let $\omega_{\theta}$ and $r_{\omega}$ be as defined throughout this paper (by \eqref{eq:omegatheta} and \eqref{eq:rw}). Suppose that $\log r_{\omega}$ has a weak gradient on $U$. For each $q\in[1,\infty)$ and each $\theta\in[0,1]$ let $M\left(U,\theta,r_{\omega},q\right)$ be the quantity 
\begin{equation}\label{eq:condition_q_with_omega_theta}
M\left(U,\theta,r_{\omega},q\right):=\left(\int_{U}\left\vert \nabla\log\left(r_{\omega}(x)\right)\right\vert ^{q}\omega_{\theta}(x)dx\right)^{\frac{1}{q}}.
\end{equation}
(Note that here $\theta$ may also equal $0$ or $1$.) We will often use the abbreviated notation $M\left(\theta,q\right)$ for $M\left(U,\theta,r_{\omega},q\right)$.
\end{definition}

\begin{lemma}\label{lem:L_q_w_is_in_L_p_w|rw|} 
Let $U$, $\omega_{0}$, $\omega_{1}$, $\omega_{\theta}$ and $r_{\omega}$ have the properties stated in Definition \ref{def:M-theta-q}. Suppose, furthermore, that there exist $q\in(p,\infty)$ and $\theta\in(0,1)$ for which $M\left(\theta,q\right)<\infty$.
Then, in terms of the notation of Definition \ref{def:SemiNorm}, we have that
$$L^{\frac{qp}{q-p}}\left(U,\omega_{\theta}\right)\subset L^{p}\left(U,\omega_{\theta}\left\vert \nabla\log\left(r_{\omega}\right)\right\vert ^{p}\right)$$
and 
\begin{equation}\label{eq:norm_control_L_q_over_L_p_with_omega|rw|}
\left\Vert \cdot\right\Vert _{L^{p}\left(U,\omega_{\theta}\left\vert \nabla\log\left(r_{\omega}\right)\right\vert ^{p}\right)}\leq M\left(\theta,q\right)\left\Vert \cdot\right\Vert _{L^{\frac{qp}{q-p}}\left(U,\omega_{\theta}\right)}
\end{equation}
for these values of $q$ and $\theta$. 
\end{lemma}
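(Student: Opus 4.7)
The plan is to recognize this as a direct application of Hölder's inequality, with the exponent $r:=\frac{qp}{q-p}$ chosen precisely so that the resulting Hölder-conjugate exponent on the gradient factor comes out to be $q$.

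First, I would set $r=\frac{qp}{q-p}$ and, for an arbitrary $\phi\in L^{r}(U,\omega_\theta)$, rewrite the integrand defining the left-hand side of \eqref{eq:norm_control_L_q_over_L_p_with_omega|rw|} as a product
$$
|\phi(x)|^{p}\,\omega_\theta(x)^{p/r}\;\cdot\;|\nabla\log r_\omega(x)|^{p}\,\omega_\theta(x)^{1-p/r},
$$
which is obviously just a splitting of $|\phi|^p|\nabla\log r_\omega|^p\omega_\theta$ into two pieces whose $\omega_\theta$ weights add up to $\omega_\theta$.

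Next I would apply Hölder's inequality with conjugate exponents $s=\frac{r}{p}$ and $s'=\frac{r}{r-p}$. A short arithmetic check confirms that $ps'=\frac{pr}{r-p}=q$ and $s'(1-p/r)=1$, so the second factor produced by Hölder is exactly $M(\theta,q)^{p}$ (raised to $1/s'=p/q$), while the first factor is $\left(\int_U|\phi|^r\omega_\theta\,dx\right)^{p/r}=\|\phi\|_{L^r(U,\omega_\theta)}^{p}$. Taking $p$-th roots then gives
$$
\|\phi\|_{L^p(U,\omega_\theta|\nabla\log r_\omega|^p)}\le M(\theta,q)\,\|\phi\|_{L^{qp/(q-p)}(U,\omega_\theta)},
$$
which is precisely \eqref{eq:norm_control_L_q_over_L_p_with_omega|rw|}, and in particular yields the claimed inclusion.

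There is no real obstacle here; the only thing that requires a moment's care is bookkeeping the exponents — verifying that $r$ has been chosen so that the Hölder conjugate of $r/p$ lands exactly on $q$ after the factor of $p$ in the gradient term is taken into account, and that the fractional powers of $\omega_\theta$ reassemble to $\omega_\theta$ on both sides. Once the splitting is written down correctly, everything else is immediate.
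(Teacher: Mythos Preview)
Your proof is correct and is essentially the same as the paper's: both apply H\"older's inequality with exponent $q/p$ (your $s'$) and its conjugate $q/(q-p)$ (your $s$) to the product $|\phi|^{p}\cdot|\nabla\log r_{\omega}|^{p}$ against the measure $\omega_{\theta}\,dx$. The only cosmetic difference is that you split $\omega_{\theta}$ into fractional pieces before applying H\"older, whereas the paper treats $\omega_{\theta}\,dx$ as the underlying measure and applies H\"older directly --- but the two computations are identical.
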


\begin{proof} Using H\"older's inequality with the exponent $\frac{q}{p}$
we find that 
$$\int_{U}\left\vert \phi(x)\right\vert ^{p}\left\vert \nabla\log\left(r_{\omega}(x)\right)\right\vert ^{p}\omega_{\theta}(x)dx$$
$$\leq\left(\int_{U}\left\vert \nabla\log\left(r_{\omega}(x)\right)\right\vert ^{q}\omega_{\theta} (x)dx\right)^{\frac{p}{q}}\left(\int_{U}\left\vert \phi(x)\right\vert ^{\frac{pq}{q-p}}\omega_{\theta}(x)dx\right)^{\frac{q-p}{q}}. $$
This proves both the inclusion and \eqref{eq:norm_control_L_q_over_L_p_with_omega|rw|}.
\end{proof} 
With this in hand we can deduce the following: 
\begin{theorem}\label{thm:main_with_smaller_space}
Let $\omega_{0}$ and $\omega_{1}$ be a pair of weight functions on $U$ that satisfy the compact boundedness condition. Assume furthermore that 
$$r_{\omega}(x)=\frac{\omega_{0}(x)}{\omega_{1}(x)}$$
is locally Lipschitz on $U$, and that $U$, $\omega_{0}$ and $\omega_{1}$ are such that the conditions of Theorem \ref{thm:main_for_U_including_R_d} hold. Then, if $M\left(\theta,q\right)$, defined by \eqref{eq:condition_q_with_omega_theta}, is finite for some $q>p$, we have that 
$$
W^{1,p}\left(U,\omega_{\theta}\right)\cap L^{\frac{qp}{q-p}}\left(U,\omega_{\theta}\right)\subset\left[W^{1,p}\left(U,\omega_{0}\right),W^{1,p}\left(U,\omega_{1}\right)\right]_{\theta}.
$$
Moreover, there exists a constant $C_{p}>0$ which depends only on $p$,  such that for any $\phi\in W^{1,p}\left(U,\omega_{\theta}\right)\cap L^{\frac{qp}{q-p}}\left(U,\omega_{\theta}\right)$, 
\begin{equation}\label{eq:main_norm_smaller_space}
\begin{split} & \left\Vert \phi\right\Vert {}_{\left[W^{1,p}\left(U,\omega_{0}\right),W^{1,p}\left(U,\omega_{1}\right)\right]_{\theta}}\\
\leq C_p & \left(\left\Vert \phi\right\Vert _{W^{1,p}\left(U,\omega_{\theta}\right)}^{p}+M\left(\theta,q\right)^{p}\left\Vert \phi\right\Vert _{L^{\frac{qp}{q-p}}\left(U,\omega_{\theta}\right)}^{p}\right)^{\frac{1}{p}}
\end{split}
\end{equation}
\end{theorem}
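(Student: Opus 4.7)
The plan is essentially to chain together the two pieces already at our disposal: Lemma \ref{lem:L_q_w_is_in_L_p_w|rw|}, which gives a continuous embedding $L^{qp/(q-p)}(U,\omega_\theta) \hookrightarrow L^p(U,\omega_\theta|\nabla\log r_\omega|^p)$ with operator norm at most $M(\theta,q)$, and Theorem \ref{thm:main_for_U_including_R_d}, which, under our assumptions, supplies the inclusion $\mathcal{W}^p(U,\theta,r_\omega) \subset [W^{1,p}(U,\omega_0),W^{1,p}(U,\omega_1)]_\theta$ with the right-hand inequality from \eqref{eq:main_norm}. No new analytic ingredient will be needed.

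Concretely, I would take an arbitrary $\phi \in W^{1,p}(U,\omega_\theta) \cap L^{qp/(q-p)}(U,\omega_\theta)$ and first verify that $\phi \in \mathcal{W}^p(U,\theta,r_\omega)$: membership in $W^{1,p}(U,\omega_\theta)$ is assumed, while the second defining condition of $\mathcal{W}^p(U,\theta,r_\omega)$, namely $\phi|\nabla\log r_\omega| \in L^p(U,\omega_\theta)$, follows from Lemma \ref{lem:L_q_w_is_in_L_p_w|rw|} together with the hypothesis $M(\theta,q) < \infty$, and in fact
$$\|\phi\|_{L^p(U,\omega_\theta|\nabla\log r_\omega|^p)} \leq M(\theta,q)\,\|\phi\|_{L^{qp/(q-p)}(U,\omega_\theta)}.$$
Consequently, by the definition \eqref{eq:norm_on_W} of the norm on $\mathcal{W}^p(U,\theta,r_\omega)$,
$$\|\phi\|_{\mathcal{W}^p(U,\theta,r_\omega)} \leq \bigl(\|\phi\|_{W^{1,p}(U,\omega_\theta)}^p + M(\theta,q)^p \|\phi\|_{L^{qp/(q-p)}(U,\omega_\theta)}^p\bigr)^{1/p}.$$

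Next, since the conditions of Theorem \ref{thm:main_for_U_including_R_d} are assumed to hold, we have both the inclusion $\mathcal{W}^p(U,\theta,r_\omega) \subset [W^{1,p}(U,\omega_0),W^{1,p}(U,\omega_1)]_\theta$ and (through the underlying Theorem \ref{thm:main}, in particular \eqref{eq:main_norm}) the bound $\|\phi\|_{[W^{1,p}(U,\omega_0),W^{1,p}(U,\omega_1)]_\theta} \leq C_p\,\|\phi\|_{\mathcal{W}^p(U,\theta,r_\omega)}$ with the same constant $C_p$ depending only on $p$. Combining this with the displayed inequality above gives \eqref{eq:main_norm_smaller_space} and the desired inclusion in one stroke.

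There is essentially no obstacle; the only subtlety is bookkeeping. One should make sure that $\log r_\omega$ indeed has a weak gradient, so that $M(\theta,q)$ is well defined: this follows from the local Lipschitz assumption on $r_\omega$ together with Rademacher's Theorem \ref{thm:rademacher}, exactly as invoked in the proof of Theorem \ref{thm:W_p_is_W_p_0}. Once this is noted, Definition \ref{def:M-theta-q} applies and the argument proceeds as above without further work.
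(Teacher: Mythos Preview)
Your proof is correct and follows exactly the same approach as the paper, which simply cites Theorem~\ref{thm:main_for_U_including_R_d} and Lemma~\ref{lem:L_q_w_is_in_L_p_w|rw|} as immediate consequences, with the norm inequality~\eqref{eq:main_norm_smaller_space} coming from~\eqref{eq:main_norm} and~\eqref{eq:norm_control_L_q_over_L_p_with_omega|rw|}. You have just spelled out the argument in slightly more detail than the paper does.
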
 
\begin{proof} 
This theorem is an immediate consequence of Theorem \ref{thm:main_for_U_including_R_d} and Lemma \ref{lem:L_q_w_is_in_L_p_w|rw|}. The inequality \eqref{eq:main_norm_smaller_space} follows immediately from  \eqref{eq:main_norm} of Theorem \ref{thm:main} and \eqref{eq:norm_control_L_q_over_L_p_with_omega|rw|}. 
\end{proof} 
Obviously, a modification of the above theorem can be made to accommodate the more general case of Theorem
\ref{thm:main} when we start considering appropriate closures.\\
 We conclude this subsection, and section, with the following remark:
\begin{remark}\label{rem:simpler_conditions_for_M_rw_qZZ} 
A simple application of H\"older's inequality shows that if $M\left(0,q\right)$ and $M\left(1,q\right)$ are finite then 
$$M\left(\theta,q\right)\leq M\left(0,q\right)^{1-\theta}M\left(1,q\right)^{\theta}<\infty\quad \text{for every} \;\theta\in(0,1).$$
 \end{remark}

\section{Final Remarks}\label{sec:final}
In this last section of our work, we reflect on possible generalisations of our main results via notions of equivalence of weight functions, and possible applications to evolution equations. Then, finally, we discuss some further possibilities for research related to the issue of interpolation of weighted Sobolev spaces.
\subsection{Equivalence of weights}\label{subsec:equivalence_of_weights} As our main results of this work depend strongly on the underlying weights of our respective spaces, one natural question can arise:
\begin{ques}\label{ques:equivalence_of_weightsZZ}
 Can one change the weight under consideration to a 'better one' (i.e. suitable for our theorems) without changing the underlying Sobolev space? 
\end{ques}
The answer to that is in the affirmative - in the obvious case when one deals with \textit{equivalent weights}.
\begin{definition}\label{def:equivalent_weightsZZ}
Adopting standard terminology, we say that two strictly positive measurable functions $\omega$ and $\eta$ are equivalent, if there exists a constant $A>0$ such that
$$
A^{-1}\eta(x) \leq \omega(x) \leq A\eta(x).
$$
\end{definition}
It is immediate to check that if $\omega_0$ and $\omega_1$ satisfy the compact boundedness condition, and are equivalent, respectively, to $\eta_0$ and $\eta_1$, then the latter functions also satisfy the same condition. Moreover, $W^{1,p}\pa{\omega}$ and $W^{1,p}\pa{\eta}$ are isomorphic to each other for equivalent weights. (Remark \ref{rem:MuchMoreGeneralThanContinuous}
has some relevance in this context.)\\
Accordingly, one can immediately state and almost immediately prove more general versions of our main theorem and its main consequences, in which the weight functions $\omega_{0}$ and $\omega_{1}$ are required merely to be equivalent to other weight functions which satisfy the requirements in the current statements of those theorems.
\subsection{Possible applications to evolution equations}\label{subsec:application_evolution} Many linear evolution equations of the form
\begin{equation}\label{eq:linear_eq}
\partial_t f(t,x)=\Li (f)(t,x)
\end{equation} 
give rise to an evolution semigroup, $\br{T(t)}_{t\geq 0}$, which acts smoothly on several spaces simultaneously. Moreover, in fields such as parabolic PDEs and kinetic theory, one frequently encounters evolution semigroups whose action on the initial datum of the problem produces functions which have increased smoothness, decay and integrability (hypercontractivity). One good example of semigroups with such behaviour are the so-called Markov semigroups. We refer the reader to \cite{BGL} for more information on the subject.\\
In many cases, one is interested in the decay, or convergence to equilibrium as $t$ tends to $\infty$, of the solution at time $t$ of \eqref{eq:linear_eq} with respect to an appropriate norm, such as $\norm{\cdot}_{W^{1,p}\pa{\R^d,\omega}}$. \\
Under suitable hypotheses, Theorem \ref{thm:main_for_U_including_R_d}, together with observations that we made in \S\ref{subsec:simple_subspace}, allows us to interpolate rates of decay/convergence between two different weighted Sobolev spaces to an intermediate space of the same kind.
More explicitly, if we know that for a given semigroup $\left\{ T(t)\right\} _{t\ge0}$ the solution to \eqref{eq:linear_eq} decays or converges to an equilibrium at rates that are given by the norms of $T(t)$ on appropriate subspaces of both $W^{1,p}\pa{U,\omega_0}$ and $W^{1,p}\pa{U,\omega_1}$, then it may be possible to deduce analogous information about the norm of $T(t)$ on a suitable subspace of $W^{1,p}\pa{U,\omega_\theta}$, for $\theta\in(0,1)$.\\
We shall now give one example of a setting where this can be done. Here we assume that the norms of $T(t)$ decay to zero. The underlying open set for all of the spaces is to be understood to be $\mathbb{R}^{d}$. 
\begin{theorem}\label{thm:application_evolutionZZ} 
Let $\left(\omega_{0},\omega_{1}\right)$ be a pair of weight functions that satisfy the compact boundedness
condition on $\mathbb{R}^{d}$, and such that $r_{\omega}$, defined by \eqref{eq:rw}, is locally Lipschitz on $\mathbb{R}^{d}$. Consider a semigroup of operators $\left\{ T(t)\right\} _{t>0}$ such that, for some fixed $p\in[1,\infty),$ 
\begin{equation}\label{eq:TtIsBounded}
T(t): \left(W^{1,p}\left(\omega_{0}\right),W^{1,p}\left(\omega_{1}\right)\right)\rightarrow\left(W^{1,p}\left(\omega_{0}\right),W^{1,p}\left(\omega_{1}\right)\right)\,\text{ for each }\; t>0.
\end{equation}
Assume furthermore that there exist $q\in(p,\infty)$, $\theta\in(0,1)$, $t_{0}>0$ and a class of initial data $\mathcal{G}$  such that for every $g\in\mathcal{G}$ 
$$T(t_{0})g\in W^{1,p}\left(\omega_{\theta}\right)\cap L^{\frac{qp}{q-p}}\left(\omega_{\theta}\right).$$
Suppose furthermore that $M\left(\theta,q\right)$, defined in \eqref{eq:condition_q_with_omega_theta}, is finite 
for these values of $\theta$ and $q$. Then we have, for every $g\in\mathcal{G}$ and $t>t_{0}$,
that $T(t)g\in W^{1,p}\left(\omega_{\theta}\right)$ and also that
$$\left\Vert T(t)g \right\Vert _{W^{1,p}\left(\omega_{\theta}\right)}\leq\mathcal{C}\left\Vert T\left(t-t_{0}\right)\right\Vert _{\mathcal{B}\left(W^{1,p}\left(\omega_{0}\right)\right)}^{1-\theta}\left\Vert T\left(t-t_{0}\right)\right\Vert _{\mathcal{B}\left(W^{1,p}\left(\omega_{1}\right)\right)}^{\theta}$$
$$\left(\left\Vert T(t_{0})g \right\Vert _{W^{1,p}\left(\omega_{\theta}\right)}+M\left(\theta,q\right)\left\Vert T(t_{0})g \right\Vert _{L^{\frac{qp}{q-p}}\left(\omega_{\theta}\right)}\right)$$
where the constant $\mathcal{C}$ depends only on $p$. 
\end{theorem}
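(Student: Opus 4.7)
The plan is to exploit the semigroup property to factor $T(t)=T(t-t_0)\circ T(t_0)$, then apply interpolation to the operator $T(t-t_0)$ acting on the complex interpolation space $[W^{1,p}(\omega_0),W^{1,p}(\omega_1)]_\theta$, using the inclusion from Theorem \ref{thm:main_with_smaller_space} to place $T(t_0)g$ inside that interpolation space and the inclusion from Theorem \ref{thm:easy_inclusion} to compare the interpolation norm with the target $W^{1,p}(\omega_\theta)$ norm.

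First I would verify that Theorem \ref{thm:main_with_smaller_space} is applicable in the present setting. The hypotheses on $\omega_0,\omega_1$ (compact boundedness on $\mathbb{R}^d$ and local Lipschitz regularity of $r_\omega$) coincide with those of Theorem \ref{thm:main}, and since the underlying set is $U=\mathbb{R}^d$, Theorem \ref{thm:W_p_is_W_p_0} delivers $W_0^{1,p}(\omega_j)=W^{1,p}(\omega_j)$ and $\mathcal{W}_0^p(\theta,r_\omega)=\mathcal{W}^p(\theta,r_\omega)$, so the hypotheses of Theorem \ref{thm:main_for_U_including_R_d} are met. Given $g\in\mathcal{G}$, the assumption that $T(t_0)g\in W^{1,p}(\omega_\theta)\cap L^{qp/(q-p)}(\omega_\theta)$ combined with the finiteness of $M(\theta,q)$ allows me to invoke Theorem \ref{thm:main_with_smaller_space} to conclude $T(t_0)g\in[W^{1,p}(\omega_0),W^{1,p}(\omega_1)]_\theta$ with the norm bound
$$
\|T(t_0)g\|_{[W^{1,p}(\omega_0),W^{1,p}(\omega_1)]_\theta}\le C_p\Bigl(\|T(t_0)g\|_{W^{1,p}(\omega_\theta)}^p+M(\theta,q)^p\|T(t_0)g\|_{L^{qp/(q-p)}(\omega_\theta)}^p\Bigr)^{1/p}.
$$

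Next I would apply Calder\'on's interpolation theorem (Theorem \ref{thm:interpolation_thm_basic}) to the operator $T(t-t_0):(W^{1,p}(\omega_0),W^{1,p}(\omega_1))\to(W^{1,p}(\omega_0),W^{1,p}(\omega_1))$, which is a bounded morphism of Banach couples by assumption \eqref{eq:TtIsBounded}. This gives
$$
\|T(t-t_0)\|_{\mathcal{B}([W^{1,p}(\omega_0),W^{1,p}(\omega_1)]_\theta)}\le\|T(t-t_0)\|_{\mathcal{B}(W^{1,p}(\omega_0))}^{1-\theta}\|T(t-t_0)\|_{\mathcal{B}(W^{1,p}(\omega_1))}^{\theta}.
$$
Combining this with the previous step and using the semigroup identity $T(t)g=T(t-t_0)(T(t_0)g)$ bounds the interpolation norm of $T(t)g$. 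Finally, the ``easy'' inclusion from Theorem \ref{thm:easy_inclusion} (equivalently the left-hand inequality in \eqref{eq:main_norm}) gives
$$
\|T(t)g\|_{W^{1,p}(\omega_\theta)}\le\|T(t)g\|_{[W^{1,p}(\omega_0),W^{1,p}(\omega_1)]_\theta},
$$
and assembling the three estimates yields the stated inequality, after absorbing the elementary comparison between the $\ell^p$ and $\ell^1$ combinations of two non-negative numbers into the constant $\mathcal{C}$, which therefore depends only on $p$.

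There is no real obstacle here beyond bookkeeping: the result is essentially a direct splicing of Calder\'on's interpolation bound with the two embeddings established earlier in the paper. The only point requiring slight care is ensuring that all hypotheses of Theorem \ref{thm:main_for_U_including_R_d} (and hence of Theorem \ref{thm:main_with_smaller_space}) are satisfied automatically from the assumptions stated here, which, as explained above, follows from working on $U=\mathbb{R}^d$ together with Theorem \ref{thm:W_p_is_W_p_0}.
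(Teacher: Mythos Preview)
Your proposal is correct and follows essentially the same route as the paper's proof: factor $T(t)=T(t-t_0)T(t_0)$, use Theorem \ref{thm:main_with_smaller_space} to place $T(t_0)g$ in the interpolation space, apply Theorem \ref{thm:interpolation_thm_basic} to $T(t-t_0)$, and finish with the easy inclusion \eqref{eq:easy_inclusion_norm}. If anything, you are slightly more careful than the paper in spelling out why the hypotheses of Theorem \ref{thm:main_with_smaller_space} are automatically satisfied on $\mathbb{R}^d$ via Theorem \ref{thm:W_p_is_W_p_0}.
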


\begin{proof}
We start by noting that, due to Theorem \ref{thm:main_with_smaller_space}, for each $g\in\mathcal{G}$ we have that 
$$T(t_{0})g\in\left[W^{1,p}\left(\omega_{0}\right),W^{1,p}\left(\omega_{1}\right)\right]_{\theta}.$$\\
Recalling Theorem \ref{thm:easy_inclusion} and the semigroup property
$$T(t)=T(t-t_{0})T(t_{0})$$
which holds for every $t\geq t_{0}$, we see that for each $g\in\mathcal{G}$ and $t>t_{0}$ we have that $T(t)g\in\left[W^{1,p}\left(\omega_{0}\right),W^{1,p}\left(\omega_{1}\right)\right]_{\theta}\subset W^{1,p}\left(\omega_{\theta}\right)$. Furthermore, using this and \eqref{eq:easy_inclusion_norm} together with \eqref{eq:TtIsBounded} and Theorem \ref{thm:interpolation_thm_basic}, we obtain that
$$\left\Vert T(t)g\right\Vert _{W^{1,p}\left(\omega_{\theta}\right)}\leq\left\Vert T(t)g\right\Vert _{\left[W^{1,p}\left(\omega_{0}\right),W^{1,p}\left(\omega_{1}\right)\right]_{\theta}}=\left\Vert T\left(t-t_{0}\right)\left(T(t_{0})g\right)\right\Vert _{\left[W^{1,p}\left(\omega_{0}\right),W^{1,p}\left(\omega_{1}\right)\right]_{\theta}}$$
$$\leq\left\Vert T\left(t-t_{0}\right)\right\Vert _{\mathcal{B}\left(W^{1,p}\left(\omega_{0}\right)\right)}^{1-\theta}\left\Vert T\left(t-t_{0}\right)\right\Vert _{\mathcal{B}\left(W^{1,p}\left(\omega_{1}\right)\right)}^{\theta}\left\Vert T(t_{0})g\right\Vert _{\left[W^{1,p}\left(\omega_{0}\right),W^{1,p}\left(\omega_{1}\right)\right]_{\theta}}.$$
Using \eqref{eq:main_norm_smaller_space} yields the inequality required to complete the proof of this theorem.
\end{proof}

\subsection{Future research and open problems}\label{subsec:future} We feel excited about future possibilities in continuing to try and understand whether or not one can obtain a Stein-Weiss like theorem for weighted Sobolev spaces under more general hypotheses on the underlying set and the weight functions. In particular, some of the problems which we think are worth investigating are the following:
\begin{ques*}
Can one find an open set $U$ and weights $\omega_0$ and $\omega_1$ on it for which
$$\W^p\pa{U,\theta,\rw}\underset{\not=}{\subset}\rpa{W^{1,p}\pa{U,\omega_0},W^{1,p}\pa{U,\omega_1}}_\theta ?$$
\end{ques*}
\begin{ques*}
Can one find necessary and sufficient conditions under which 
$$W^{1,p}_0\pa{U,\omega}=W^{1,p}\pa{U,\omega} \quad\text{and}\quad \W^p_0\pa{U,\theta,\rw}=\W^p\pa{U,\theta,\rw}?$$
\end{ques*}
Let us mention that pages 550-554 of \cite{KO} contain a discussion of the closure of $C_{c}^{\infty}(U)$ (there denoted by $C_{0}^{\infty}(U)$) in various weighted Sobolev spaces. A different but somewhat related topic is considered on pages 43-49 of \cite{Kufner}.
\begin{ques*}
Can one obtain similar results to Theorems \ref{thm:main}, \ref{thm:main_for_W_0}, \ref{thm:main_for_U_including_R_d} and \ref{thm:Stein_Weiss_for_W} for weighted homogeneous Sobolev spaces $\dot{W}^{1,p}\pa{U,\omega}$ (i.e. where we only consider the norm of the weak gradient)?\\
(The answer is evidently yes for each $p\in[1,\infty)$ in the very special case where $d=1$ and $U=\mathbb{R}$ and both of the weight functions satisfy the compact boundedness condition. In that case the maps $\phi\mapsto\phi'$ and its inverse $\psi\mapsto\int_{0}^{x}\psi(t)dt$ give us an isometric identification between the two couples 
$$\left(\dot{W}^{1,p}\left(\mathbb{R},\omega_{0}\right),\dot{W}^{1,p}\left(\mathbb{R},\omega_{1}\right)\right)\mbox{\,\,and\,\,}\left(L^{p}\left(\mathbb{R},\omega_{0}\right),L^{p}\left(\mathbb{R},\omega_{1}\right)\right)$$
and so we can apply the ``classical'' Stein-Weiss theorem. A more detailed explanation can be found in the appendix.)
\end{ques*}
We are confident that more can be done (and asked), and will be done in the next few years.
 
\appendix
\section{Additional Proofs}\label{app:additionalZZ}
In this appendix we include some additional proofs that we felt would hinder the flow of the main body of the paper. 
\subsection{An alternative characterization of the space $W_{loc}^{1,p}(U)$}\label{sub:AlternativeCharacterizationZZ}
In this part of the appendix we will provide some rather straightforward arguments which can be used to justify the following claim, made immediately after Definition \ref{def:LocalSobolev}:
\begin{claim}\label{claim:alternative_loc_space}
 $f\in W_{loc}^{1,p}(U)$ if and only if, for every open subset $V$ of $U$ whose closure is a compact subset of $U$, the restriction of $f$ to $V$ is an element of $W^{1,p}(V)$.
\end{claim}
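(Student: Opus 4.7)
The plan is to establish the two directions of the equivalence separately, with the main content of the argument lying in the reverse direction, where one must patch together locally defined weak gradients into a single weak gradient on all of $U$.

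For the forward direction, suppose $f \in W_{loc}^{1,p}(U)$ with weak gradient $\nabla f$ on $U$, and let $V$ be an open subset of $U$ with $\overline{V}$ a compact subset of $U$. Taking $K = \overline{V}$ in the definition of $W_{loc}^{1,p}(U)$, we immediately get that $f|_V \in L^p(V)$ and $\nabla f|_V \in L^p(V,\mathbb{C}^d)$. To see that $\nabla f|_V$ is the weak gradient of $f|_V$ on $V$, observe that every $\psi \in C_c^\infty(V)$ extends by zero to an element of $C_c^\infty(U)$ (since the support of $\psi$, compact in $V$, remains compact in $U$). Substituting this extension into the definition of the weak gradient of $f$ on $U$ and noting that both $\psi$ and $\nabla\psi$ vanish outside $V$ gives the required identity for $f|_V$. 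Hence $f|_V \in W^{1,p}(V)$.

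For the reverse direction, the first task is to build a weak gradient of $f$ on all of $U$. Choose an exhausting sequence of open sets $V_1 \subset \overline{V_1} \subset V_2 \subset \overline{V_2} \subset \cdots \subset U$ with each $\overline{V_n}$ compact in $U$ and $\bigcup_n V_n = U$ (one may, for instance, take $V_n = \{ x \in U : |x| < n,\ \mathrm{dist}(x,U^c) > 1/n\}$ when nonempty). By hypothesis, each restriction $f|_{V_n}$ has a weak gradient $g_n \in L^p(V_n,\mathbb{C}^d)$. Since $V_n \subset V_{n+1}$, the restriction $g_{n+1}|_{V_n}$ is also a weak gradient of $f|_{V_n}$ (again because test functions on $V_n$ extend by zero to test functions on $V_{n+1}$), and weak gradients are unique almost everywhere, so $g_{n+1} = g_n$ a.e.\ on $V_n$. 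This compatibility lets us define an almost everywhere equivalence class $g : U \to \mathbb{C}^d$ by $g|_{V_n} = g_n$. For any $\psi \in C_c^\infty(U)$, the compact support of $\psi$ lies in some $V_n$, so
$$\int_U \nabla\psi(x)\,f(x)\,dx = \int_{V_n} \nabla\psi(x)\,f(x)\,dx = -\int_{V_n} \psi(x)\,g_n(x)\,dx = -\int_U \psi(x)\,g(x)\,dx,$$
which identifies $g$ as the weak gradient $\nabla f$ of $f$ on $U$.

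It remains to verify the $L^p$ integrability on compacta. For any compact $K \subset U$, a standard covering argument produces an open set $V$ with $K \subset V \subset \overline{V} \subset U$ and $\overline{V}$ compact: cover $K$ by finitely many open balls whose closures are contained in $U$ and let $V$ be their union. By assumption $f|_V \in W^{1,p}(V)$, so $\chi_K f$ and $\chi_K \nabla f$, being dominated in modulus by $\chi_V f$ and $\chi_V \nabla f$ respectively and vanishing outside $V$, lie in $L^p(U)$ and $L^p(U,\mathbb{C}^d)$. This confirms $f \in W_{loc}^{1,p}(U)$.

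The only potentially delicate step is the compatibility $g_{n+1} = g_n$ a.e.\ on $V_n$, which I expect to be the main (though still routine) obstacle: it rests squarely on the uniqueness of weak gradients, which follows from the density of $C_c^\infty(V_n)$ in the dual sense, a standard fact. Once this patching is justified, the rest is a bookkeeping exercise.
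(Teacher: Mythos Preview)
Your proof is correct and follows essentially the same approach as the paper's: both directions are handled the same way, with the reverse direction using an exhaustion of $U$ by relatively compact open sets, invoking uniqueness of weak gradients to get compatibility of the local gradients $g_n$, and then patching them into a global weak gradient. The only cosmetic differences are that the paper defines the global $g$ as a pointwise limit of zero-extensions of the $g_n$ rather than directly by $g|_{V_n}=g_n$, and that for the final $L^p$-on-compacta check the paper simply uses $K\subset U_n$ for some $n$ from the exhaustion rather than your separate finite-cover construction of an intermediate $V$.
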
 
In order to prove this claim, we require the following notation: For a given open set $U$, and each $n\in\mathbb{N}$ we define the open set 
$$ U_{n}=\left\{ x\in U\,|\, \text{dist}(x,\partial U)>1/n\right\} \cap B_{n}(0)$$
 (where $B_n(0)$ is the open ball of radius $n$ in $\mathbb{R}^{d}$ centred at the origin). The closure of $U_{n}$ is contained in $U$ and in $\overline{B_{n}(0)}$ and is therefore a compact subset of $U$.
\begin{proof}[Proof of Claim \ref{claim:alternative_loc_space}]
Suppose that the function $f:U\to\mathbb{C}$ has the property that, for each $n\in\mathbb{N}$, its restriction $f_{n}:=\left.f\right|_{U_{n}}$ to $U_{n}$ has a weak
gradient on $U_{n}$. Let us denote this weak gradient by $g_{n}$. We do not yet need to suppose also that $f_{n}\in W^{1,p}(U_{n})$ but later, when we do,
we will also be able to assert that $g_{n}\in L^{p}(U_{n},\mathbb{C}^{d})$.\\
Due to the uniqueness of the weak gradient, and the fact that $U_{n}\subset U_{n_{1}}\subset U$ for all $n_{1}>n$, we see that if $n_{1}>n$ we have that $g_{n_{1}}|_{U_{n}}=g_{n}$. For each $n\in\mathbb{N}$ let $v_{n}:U\to\mathbb{C}^{d}$ be the function which coincides with $g_{n}$ on $U_{n}$ and equals $\vec{0}$ on $U\setminus U_{n}$. Thus we have that
\begin{equation}\label{eq:oompz}
\begin{gathered}
v_{n_{1}}(x)=g_{n_{1}}(x)=\left.g_{n_{1}}(x)\right|_{U_{n}}=g_{n}(x)=v_{n}(x)\\
\text{for all }\;x\in U_{n}\text{ and all}\;n_{1}>n\text{ and all }\;n\in\mathbb{N}.
\end{gathered}
\end{equation}
This shows that, for each fixed $n\in\mathbb{N}$, the pointwise limit $\lim_{k\to\infty}v_{k}(x)$ exists for all $x\in U_{n}$. Since $U=\bigcup_{n\in\mathbb{N}}U_{n}$ this in turn implies that this same pointwise limit $\lim_{k\to\infty}v_{k}(x)$ exists for \textit{all} $x\in U$ and so we can define a new function $g:U\to\mathbb{C}^{d}$
by setting 
$$g(x):=\lim_{k\to\infty}v_{k}(x)\quad \text{for all }\;x\in U.$$ 
In view of (\ref{eq:oompz}) we have that, for each $n\in\mathbb{N}$,
\begin{equation}\label{eq:ggn}
g(x)=g_{n}(x)\mbox{ for all\,}x\in U_{n}
\end{equation}
We will now show that $f$ has a weak gradient on $U$ and that $g$ is that gradient. Given an arbitrary $\phi\in C_{c}^{\infty}(U)$ let $K$ be a compact set contained in $U$ such that $\phi$ and therefore also $\nabla\phi$ both vanish on $U\setminus K$. There exists $n\in\mathbb{N}$ such that $K\subset U_{n}$, and therefore the restriction of $\phi$ to $U_{n}$ is in $C_{c}^{\infty}(U_{n})$. We also have that 
\begin{equation}\label{eq:phivanish}
\phi(x)=0\text{ and }\;\nabla\phi(x)=\vec{0}\quad \text{for all }\;x\in U\setminus U_{n}.
\end{equation}
Consequently, using (\ref{eq:phivanish}), then the definitions of $f_{n}$ and $g_{n}$ and then (\ref{eq:ggn}) and then (\ref{eq:phivanish})
again, we obtain that 
\begin{align*}
\int_{U}f\nabla\phi dx & =\int_{U_{n}}f\nabla\phi dx=\int_{U_{n}}f_{n}\nabla\phi dx=-\int_{U_{n}}\phi g_{n}dx\\
 & =-\int_{U_{n}}\phi gdx=-\int_{U}\phi gdx.
\end{align*}
Since the preceding argument holds for every $\phi\in C_{c}^{\infty}(U)$, this shows that $g$ indeed is the (necessarily unique) weak gradient of $f$ on $U$. \\
Suppose now that $f$ has the property that, for every open subset $V$ of $U$ such that $\overline{V}$ is a compact subset of $U$, the restriction of $f$ to $V$ is an element of $W^{1,p}(V)$. Then the preceding argument shows that $f$ has a weak gradient $\nabla f$ on $U$ which equals the function $g$ obtained above. Furthermore, the functions $f_{n}$ and $g_{n}$ defined as above now satisfy $f_{n}\in W^{1,p}(U_{n})$ and $g_{n}\in L^{p}(U_{n},\mathbb{C}^{d})$ and they
coincide with the restrictions of $f$ and of $g$ respectively to the set $U_{n}$. Thus we have $\chi_{U_{n}}f\in L^{p}(U)$ and $\chi_{U_{n}}\nabla f\in L^{p}(U,\mathbb{C}^{d})$. Let $K$ be an arbitrary compact subset of $U$. For some $n\in\mathbb{N}$ we have that $K\subset U_{n}$ and so $\chi_{K}f\in L^{p}(U)$ and
$\chi_{K}\nabla f\in L^{p}(U,\mathbb{C}^{d})$. This shows that $f\in W_{loc}^{1,p}(U)$.\\
We now prove the reverse implication.\\
Suppose that $f\in W_{loc}^{1,p}(U)$ and that $\nabla f$ denotes its weak gradient on $U$.  Let $V$ be an arbitrary open subset of $U$ whose closure $K:=\overline{V}$
is a compact subset of $U$. \\
Let $g=\left.f\right|_{V}$ and let $\nabla g$ denote the $\mathbb{C}^{d}$-valued function defined \textit{only on} $V$ which is the weak gradient of $g$ on $V$. The fact that $f\in W_{loc}^{1,p}(U)$ implies that $\chi_{K}f$ and $\chi_{K}\nabla f$ are elements, respectively of $L^{p}(U)$ and $L^{p}(U,\mathbb{C}^{d})$. The restriction of $\chi_{K}f$ to $V$ must of course equal $g$. Furthermore, by the definition of weak gradients and by their uniqueness, the restriction to $V$ of $\chi_{K}\nabla f$ must equal $\nabla g$. The fact that $f\in W_{loc}^{1,p}(U)$ implies that $\chi_{K}f$ and $\chi_{K}\nabla f$ are elements, respectively of $L^{p}(U)$ and $L^{p}(U,\mathbb{C}^{d})$. Therefore $g$ and $\nabla g$ are in $L^{p}(V)$ and $L^{p}(V,\mathbb{C}^{d})$ respectively.\\
In other words $g\in W^{1,p}(V)$, and this completes our proof of Claim \ref{claim:alternative_loc_space}.
\end{proof}
\subsection{From locally Lipschitz to globally Lipschitz - A proof of Fact \eqref{fact:LocLipschitzGoesGlobal}}\label{sub:LocalToGlobalZZ}
In this part of the appendix we will prove the following claim, which was formulated as the fact \eqref{fact:LocLipschitzGoesGlobal} just after the statement of Lemma \ref{lem:range}:
\begin{claim}\label{claim:known_factZZ}
If the function $\psi:U\to\mathbb{C}$ is locally Lipschitz and has compact support, then it is Lipschitz on all of $U$.
\end{claim}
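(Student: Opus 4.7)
The plan is to use the Lipschitz constant of $\psi$ on its support to control $\psi$ globally on $U$. Let $K=\mathrm{supp}\,\psi$, which by hypothesis is a compact subset of $U$. Since $\psi$ is locally Lipschitz on $U$, Definition \ref{def:LocallyLipschitzZZ} gives us a Lipschitz constant $L\ge 0$ for the restriction of $\psi$ to $K$. I will show that $L$ is actually a Lipschitz constant for $\psi$ on all of $U$.

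First I would record the preliminary fact that $\psi$ vanishes on $U\setminus K$ (by the definition of the support) and also on $\partial K$: if $p\in\partial K$ then $p\in U$ (because $K\subset U$), and since $U$ is open there is a ball around $p$ inside $U$ on which points of $U\setminus K$ accumulate to $p$; by the continuity of $\psi$ (locally Lipschitz implies continuous) it follows that $\psi(p)=0$.

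Next I would verify the Lipschitz inequality $|\psi(x)-\psi(y)|\le L|x-y|$ for arbitrary $x,y\in U$ by cases. When both points lie in $K$ the inequality is exactly the hypothesis on $L$, and when both lie in $U\setminus K$ the left side vanishes. The only interesting case is when, say, $x\in K$ and $y\in U\setminus K$. Here I would parameterise the straight segment by $\gamma(t)=(1-t)x+ty$ for $t\in[0,1]$ and set $t^{*}=\sup\{t\in[0,1]:\gamma(t)\in K\}$. The set under the sup is nonempty (it contains $0$) and closed (by continuity of $\gamma$ and closedness of $K$), so $\gamma(t^{*})\in K$; since $\gamma(1)=y\notin K$ we have $t^{*}<1$, and approaching $t^{*}$ from above produces points of $\mathbb{R}^{d}\setminus K$ converging to $\gamma(t^{*})$, which therefore lies in $\partial K$. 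Combining the preliminary fact with the Lipschitz bound on $K$ yields
\[
|\psi(x)-\psi(y)|=|\psi(x)|=|\psi(x)-\psi(\gamma(t^{*}))|\le L|x-\gamma(t^{*})|=Lt^{*}|x-y|\le L|x-y|.
\]

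The only subtlety worth flagging is that the segment $\gamma([0,1])$ need not lie inside $U$ when $U\ne\mathbb{R}^{d}$; however, this causes no trouble because the argument never invokes any property of $\psi$ at the intermediate points of the segment. It only uses the two endpoints $x$ and $\gamma(t^{*})$, both of which lie in $K\subset U$, so the Lipschitz inequality on $K$ applies directly. That observation removes what at first looks like the main obstacle, and the proof is complete.
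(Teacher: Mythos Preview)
Your proof is correct and takes a genuinely different route from the paper's. The paper enlarges $K$ to a compact set $K_{1}=\{x:\mathrm{dist}(x,K)\le\delta/2\}$ with $\delta=\mathrm{dist}(K,\mathbb{R}^{d}\setminus U)$, then uses the Lipschitz constant $C_{1}$ of $\psi$ on $K_{1}$ together with the sup-norm $C_{2}=\sup_{K_{1}}|\psi|$: for $x\in K$ and $y\notin K_{1}$ one has $|x-y|>\delta/2$, so $|\psi(x)-\psi(y)|=|\psi(x)|\le C_{2}\le (2C_{2}/\delta)|x-y|$, and the global constant obtained is $\max\{C_{1},2C_{2}/\delta\}$. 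Your argument instead exploits the vanishing of $\psi$ on $\partial K$ and locates, on the straight segment from $x\in K$ to $y\in U\setminus K$, the last point $\gamma(t^{*})$ lying in $K$; since this point is in $\partial K$, you can replace $\psi(y)=0$ by $\psi(\gamma(t^{*}))=0$ and apply the Lipschitz bound on $K$ between $x$ and $\gamma(t^{*})$. This is sharper---you recover the \emph{same} Lipschitz constant $L$ as on $K$, rather than a larger one---and sidesteps the need for the buffer set $K_{1}$. The paper's approach, on the other hand, avoids any geometric reasoning about segments and boundary points, trading sharpness for a somewhat more direct estimate. Your remark that the segment may leave $U$ but that this is harmless (since only the endpoints $x,\gamma(t^{*})\in K$ are used) is exactly the right observation.
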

\begin{proof}
Suppose that $\psi:U\to\mathbb{C}$ is locally Lipschitz and has compact support. Then there exists a compact subset $K$ of $U$ such that $\psi$ vanishes on $U\setminus K$.
The number $\delta:=\text{dist}(K,\mathbb{R}^{d}\setminus U)$ is strictly positive and we let $K_{1}$ be the set 
$$K_1=\left\{ x\in\mathbb{R}^{d}\,:\,\text{dist}(x,K)\le \frac{\delta}{2}\right\}.$$
Obviously $K_{1}\subset U$. The compactness of $K$ implies that $K_{1}$ is closed and bounded and therefore also compact. Our hypotheses on $\psi$ and the compactness of $K_{1}$ ensure that the quantities 
$$C_{1}:=\sup\left\{ \frac{\left|\psi(x)-\psi(y)\right|}{\left|x-y\right|}:x,\, y\in K_{1},\, x\ne y\right\} $$
and 
$$C_{2}:=\sup\left\{ \left|\psi(x)\right|:x\in K_{1}\right\} $$ 
are both finite. To complete the proof we will show that
\begin{equation}
\left|\psi(x)-\psi(y)\right|\le\left|x-y\right|\max\left\{ C_{1},\frac{2C_{2}}{\delta}\right\} \label{eq:WWnH}
\end{equation}
for every $x$ and $y$ in $U$. The left side of \eqref{eq:WWnH} equals $0$ whenever $x$ and $y$ are both in $U\setminus K_{1}$ and it is bounded by $C_{1}\left|x-y\right|$ whenever $x$ and $y$ are both in $K_{1}$. Thus we only need to consider what happens when $x\in K_{1}$ and $y\in U\setminus K_{1}$.\\ 
\begin{itemize}
\item If $x\notin K$ then we again have $\left|\psi(x)-\psi(y)\right|=0$.
\item If $x\in K$ then $\left|x-y\right|>\delta/2$ and so 
$$ \left|\psi(x)-\psi(y)\right|=\left|\psi(x)\right|\le C_{2}=\frac{2C_{2}}{\delta}\cdot\frac{\delta}{2}\le\frac{2C_{2}}{\delta}\left|x-y\right|.$$
\end{itemize}
We conclude that \eqref{eq:WWnH} indeed holds for all $x$ and $y$ in $U$ and our proof is complete.
\end{proof}

\subsection{A 
Stein-Weiss 
like theorem in the case where $\rw$ is not Lipschitz}\label{subsec:SW_with_no_lipZZ}
In this subsection of the appendix we will focus on showing that there exist weight functions, $\omega_0$ and $\omega_1$, for which one can obtain a Stein-Weiss like theorem, (i.e. for which \eqref{eq:OurFormula} is valid) even though $\rw$, defined in \eqref{eq:rw}, is not Lipschitz. This will show that Theorem \ref{thm:Stein_Weiss_for_W} gives a sufficient yet not necessary condition for obtaining such a result.\\
Our starting point for finding 
such an example
is the following simple lemma:
\begin{lemma}\label{lemapp:ObviousEquivalence}
Let $\rho_{0}$ and $\rho_{1}$ be weight functions on $\mathbb{R}^{d}$ which satisfy the hypotheses of Theorem \ref{thm:Stein_Weiss_for_W}. (In particular this implies
that the function $\log\left(\frac{\rho_{0}}{\rho_{1}}\right)$ is Lipschitz on $\mathbb{R}^{d}$.) Let $\omega_{1}$ be a weight function on $\mathbb{R}^{d}$ which satisfies 
\begin{equation}\label{eq:crO-Cr}
c\rho{}_{1}(x)\le\omega_{1}(x)\le C\rho_{1}(x)\text{ for all\,}x\in\mathbb{R}^{d}
\end{equation}
for two positive constants $c$ and $C$. If we set $\omega_0=\rho_0$, then we have that
$$
\left[W^{1,p}\pa{\mathbb{R}^{d},\omega_{0}},W^{1,p}\pa{\mathbb{R}^{d},\omega_{1}}\right]_{\theta}=W^{1,p}\pa{\mathbb{R}^{d},\omega_{\theta}}
$$
to within equivalence of norms.
\end{lemma}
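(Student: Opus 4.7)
The strategy is to reduce this to Theorem \ref{thm:Stein_Weiss_for_W} applied to the pair $(\rho_0,\rho_1)$, by exploiting the fact that replacing a weight by an equivalent one changes the corresponding weighted Sobolev space only up to equivalent norms, and that the complex interpolation method is insensitive to such changes. So the plan is to first transfer everything to the ``nice'' weight functions $\rho_0,\rho_1$, apply the known theorem there, and then transfer back.

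More precisely, first I would observe that the inequality \eqref{eq:crO-Cr} immediately yields the norm equivalence $c^{1/p}\|\phi\|_{W^{1,p}(\mathbb{R}^d,\rho_1)}\le \|\phi\|_{W^{1,p}(\mathbb{R}^d,\omega_1)}\le C^{1/p}\|\phi\|_{W^{1,p}(\mathbb{R}^d,\rho_1)}$, so that $W^{1,p}(\mathbb{R}^d,\omega_1)$ and $W^{1,p}(\mathbb{R}^d,\rho_1)$ coincide as sets with equivalent norms. Since $\omega_0=\rho_0$, the analogous (trivial) identification holds for the ``$0$''-spaces. Next, raising \eqref{eq:crO-Cr} to the power $\theta$ and multiplying by $\rho_0^{1-\theta}=\omega_0^{1-\theta}$ gives $c^\theta\rho_\theta(x)\le\omega_\theta(x)\le C^\theta\rho_\theta(x)$ for all $x\in\mathbb{R}^d$, whence $W^{1,p}(\mathbb{R}^d,\omega_\theta)$ and $W^{1,p}(\mathbb{R}^d,\rho_\theta)$ also coincide as sets with equivalent norms. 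I would also quickly note that $\omega_1$ inherits the compact boundedness condition from $\rho_1$ through \eqref{eq:crO-Cr}, so that all the spaces in sight are genuine Banach spaces.

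Now I would invoke Theorem \ref{thm:Stein_Weiss_for_W}\eqref{item:SW_on_Rd} with the pair $(\rho_0,\rho_1)$ (whose hypotheses are satisfied by assumption) to obtain
$$\left[W^{1,p}(\mathbb{R}^d,\rho_0),W^{1,p}(\mathbb{R}^d,\rho_1)\right]_\theta = W^{1,p}(\mathbb{R}^d,\rho_\theta)$$
to within equivalence of norms. It then remains to observe the general ``isomorphism invariance'' of the complex method: whenever $(A_0,A_1)$ and $(B_0,B_1)$ are Banach couples with $A_j=B_j$ as sets and equivalent norms ($j=0,1$), then $[A_0,A_1]_\theta=[B_0,B_1]_\theta$ as sets, again with equivalent norms. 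This is immediate from Theorem \ref{thm:interpolation_thm_basic}: the identity map is a bounded operator in both directions between the two couples, so by interpolation it is bounded in both directions between the corresponding $\theta$-spaces. Applying this with $A_j=W^{1,p}(\mathbb{R}^d,\omega_j)$ and $B_j=W^{1,p}(\mathbb{R}^d,\rho_j)$, and then using the identification of $W^{1,p}(\mathbb{R}^d,\omega_\theta)$ with $W^{1,p}(\mathbb{R}^d,\rho_\theta)$ from the previous paragraph, chains together to give the desired equality up to equivalence of norms.

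There is no real obstacle here; the argument is essentially bookkeeping. The only point worth checking carefully is the ``isomorphism invariance'' step, but as explained it is a one-line consequence of the interpolation property. Note that this proof illustrates why Theorem \ref{thm:Stein_Weiss_for_W} gives only a sufficient condition: the weight $\omega_1$ may be wildly discontinuous (for example of the type described in Remark \ref{rem:MuchMoreGeneralThanContinuous}), so that $\log(\omega_0/\omega_1)$ is far from Lipschitz, yet the Stein--Weiss type formula still holds because $\omega_1$ can be replaced, without changing the Sobolev space, by the much nicer weight $\rho_1$.
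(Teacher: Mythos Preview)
Your proof is correct and follows essentially the same approach as the paper, which simply states that the lemma is an immediate consequence of Theorem \ref{thm:Stein_Weiss_for_W} combined with the observations in Subsection \ref{subsec:equivalence_of_weights} about equivalent weights giving isomorphic Sobolev spaces. You have spelled out in detail exactly what the paper leaves implicit, including the isomorphism invariance of the complex method via Theorem \ref{thm:interpolation_thm_basic}.
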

\begin{proof}
This is an immediate consequence of Theorem \ref{thm:Stein_Weiss_for_W} combined with the observations made in Subsection \ref{subsec:equivalence_of_weights}.\end{proof}
Lemma \ref{lemapp:ObviousEquivalence} gives us a possible approach for constructing our desired example: We start with a pair of weights that satisfy our Stein-Weiss like theorem, Theorem \ref{thm:Stein_Weiss_for_W}, and modify one of the weights in a way that keeps the new weight equivalent to the original weight, but violates the Lipschitz condition on $\rw$. We shall do that now.\\
Consider the weight functions $\rho_{0}\equiv1$ and $\rho_{1}(x)=e^{-\sqrt{1+\abs{x}^{2}}}$. It is obvious that $\rho_{0}$ and $\rho_{1}$ both satisfy the compact boundedness condition and that  
$$\log\left(\frac{\rho_{0}(x)}{\rho_{1}(x)}\right)=\sqrt{1+|x|^{2}}$$
is Lipschitz on $\mathbb{R}^{d}$. Thus, we conclude that $\rho_0$ and $\rho_1$ satisfy the conditions of Theorem \ref{thm:Stein_Weiss_for_W}.\\
Now, let $\omega_{1}$ be the function
$$\omega_{1}(x)=e^{-\sqrt{1+|x|^{2}}-\sin\left(e^{|x|^{2}}\right)}.$$
Clearly $\omega_1$ satisfies \eqref{eq:crO-Cr} with $c=1/e$ and $C=e$.\\
Since $\rw=\omega_0 / \omega_1=1/\omega_1$ we see that
$$ \nabla\log\left(\rw(x)\right)=\nabla\left(\sqrt{1+|x|^{2}}+\sin\left(e^{|x|^{2}}\right)\right)=\frac{x}{\sqrt{1+|x|^{2}}}+2xe^{|x|^{2}}\cos\left(e^{|x|^{2}}\right).$$
The above is unbounded on $\R^d$, which gives us the example we needed.\\
In fact, one can do more with this example. One can even use it to show that this pair of weight functions 
has
the property that:
\begin{center}
\textit{$\mathcal{W}^{p}\left(\mathbb{R}^{d},\theta,r_{\omega}\right)$ is strictly smaller than $W^{1,p}\pa{\mathbb{R}^{d},\omega_{\theta}}$\\ 
for every $p\in[1,\infty)$ and $\theta\in(0,1)$.}
\end{center}
We leave the proof of this claim to the reader.
\par This example also leads us naturally to ask:
\begin{ques*}
Can one find weight functions $\omega_0$ and $\omega_1$ on $\R^d$, that are not equivalent to weights that satisfy the conditions of Theorem \ref{thm:Stein_Weiss_for_W} but for which we nevertheless have \eqref{eq:OurFormula}? 
\end{ques*}
We'd like to conclude this subsection by noting that a theme common to this example and the one presented in Subsection \ref{subsec:W_p_is_not_W_1p} is that they both use weight functions which are the product of a ``well behaved'' function with a highly oscillatory but bounded function.

\subsection{A comment about homogeneous weighted Sobolev spaces of univariate functions}\label{subsec:homogeneous_sobolevZZ}
In this last subsection we will show that, when the open set $U$ is $\R$, one can easily obtain a Stein-Weiss like theorem for the homogeneous weighted Sobolev spaces,  $\dot{W}^{1,p}(\mathbb{R},\omega)$, as a direct consequence of the regular Stein-Weiss theorem (as we claimed at the end of Subsection \ref{subsec:future}).\\
We start with a definition
\begin{definition}\label{def:homo_sobolevZZ}
For each weight function $\omega:\mathbb{R}\to(0,\infty)$ and each $p\in\left[1,\infty\right]$ let $\dot{W}^{1,p}(\mathbb{R},\omega)$ be the space of equivalence classes modulo constants of measurable functions $f:\mathbb{R}\to\mathbb{C}$ which have a weak derivative $f'$ in $L^{p}\left(\mathbb{R},\omega\right)$. 
This space is normed by $\left\Vert f\right\Vert _{\dot{W}^{1,p}\pa{\mathbb{R},\omega}}=\left\Vert f'\right\Vert _{L^{p}\pa{\mathbb{R},\omega}}$. 
\end{definition}
Suppose that $\omega$ satisfies the compact boundedness condition, or at least that it satisfies the weaker condition \eqref{eq:KO-Property}
which we used in Lemma \ref{lem:banach_space_and_loc}. 
Then every function $g$ in $L^{p}(\mathbb{R},\omega)$ is locally integrable and therefore the function
$$G(x):=\int_{0}^{x}g(t)dt$$ 
is defined for all $x\in\mathbb{R}$ and is absolutely continuous on every bounded interval. Moreover, its pointwise derivative $G'$ exists almost everywhere and coincides with $g$ almost everywhere. The fact that $g$ is also the weak derivative of $G$ is immediate due to the fact that we are on $\R$ and $G$ is absolutely continuous. \\
From all the above we see that the map $T$ defined by 
$$Tg(x)=\int_{0}^{x}g(t)dt$$
is a continuous linear map from $L^{p}\pa{\mathbb{R},\omega}$ onto $\dot{W}^{1,p}\pa{\mathbb{R},\omega}$.
The inverse mapping of $\dot{W}^{1,p}(\mathbb{R},\omega)$ onto $L^{p}(\mathbb{R},\omega)$
is of course simply the derivative map $D$ defined by $Df=f'$. It is also clear that $T$ and $D$ are both isometries. Since $L^{p}\left(\mathbb{R},\omega\right)$ is complete, we conclude that $\dot{W}^{1,p}(\mathbb{R},\omega)$ is complete for all weight functions $\omega$ which satisfy \eqref{eq:KO-Property}. 

Now, given $p_{0}$ and $p_{1}$ in $[1,\infty)$ and weight functions $\omega_{0}$ and $\omega_{1}$ on $\mathbb{R}$ which both satisfy the compact boundedness condition (as we saw, we can also make do with somewhat weaker assumptions), we see that $\left(\dot{W}^{1,p_{0}}\pa{\mathbb{R},\omega_{1}},\dot{W}^{1,p_{1}}\pa{\mathbb{R},\omega_{1}}\right)$ is a Banach couple. We use Theorem \ref{thm:interpolation_thm_basic} to obtain that the operators
$T$ and $D$ satisfy
$$T:\left[L^{p_{0}}\left(\mathbb{R},\omega_{0}\right),L^{p_{1}}\left(\mathbb{R},\omega_{1}\right)\right]_{\theta} \to \left[\dot{W}^{1,p_{0}}(\mathbb{R},\omega_{1}),\dot{W}^{1,p_{1}}(\mathbb{R},\omega_{1})\right]_{\theta}$$
and
$$D:\left[\dot{W}^{1,p_{0}}(\mathbb{R},\omega_{1}),\dot{W}^{1,p_{1}}(\mathbb{R},\omega_{1})\right]_{\theta} \to \left[L^{p_{0}}\left(\mathbb{R},\omega_{0}\right),L^{p_{1}}\left(\mathbb{R},\omega_{1}\right)\right]_{\theta},$$
with norms that do not exceed $1$.\\
Since the weight function $\omega_{\theta,p_{\theta}}$ defined by \eqref{eq:pwtheta} also has the compact boundedness property, we see that $T$ and $D$ are also isometries, respectively, of $L^{p_{\theta}}\pa{\mathbb{R},\omega_{\theta,p_{\theta}}}$ onto $\dot{W}^{1,p_{\theta}}\pa{\mathbb{R},\omega_{\theta,p_{\theta}}}$
and of $\dot{W}^{1,p_{\theta}}\pa{\mathbb{R},\omega_{\theta,p_{\theta}}}$ onto $L^{p_{\theta}}\pa{\mathbb{R},\omega_{\theta,p_{\theta}}}$. Combining these facts with the formula \eqref{eq:GenCalderThm} shows that 
$$\left[\dot{W}^{1,p_{0}}\pa{\mathbb{R},\omega_{1}},\dot{W}^{1,p_{1}}\pa{\mathbb{R} ,\omega_{1}}\right]_{\theta}=\dot{W}^{1,p_{\theta}}\pa{\mathbb{R},\omega_{\theta,p_{\theta}}},$$
with equality of norms.

\end{document}